\documentclass[10pt,a4paper,abstract=true,sfdefaults=false]{scrartcl}

\usepackage[T1]{fontenc}
\usepackage[utf8]{inputenc}
\usepackage[english]{babel}
\usepackage[autostyle]{csquotes}
\usepackage{lmodern}

\usepackage{amsmath, amsfonts, amssymb, amsthm, mathtools}
\usepackage{mathrsfs}
\usepackage{esint}
\usepackage{bbm}
\usepackage{bm}

\numberwithin{equation}{section}

\usepackage[a4paper, margin=1in]{geometry}
\usepackage{microtype}

\usepackage{graphicx}
\usepackage{tikz}
\usetikzlibrary{calc,intersections}
\tikzset{
  >=latex,
  shorten >=1pt
}

\usepackage{pgfplots}
\usepgfplotslibrary{groupplots}
\pgfplotsset{compat=newest}

\usepackage{xcolor}
\usepackage[
  colorlinks=true,
  linkcolor=blue,
  citecolor=blue,
  urlcolor=blue
]{hyperref}
\usepackage{url}
\usepackage{xurl}

\usepackage{enumitem}

\setkomafont{disposition}{\normalfont\bfseries}
\setkomafont{title}{\normalfont\Large\bfseries}
\setkomafont{subtitle}{\normalfont\large}
\setkomafont{author}{\normalfont}
\setkomafont{date}{\normalfont}
\setkomafont{publishers}{\normalfont\small}

\usepackage[backend=biber,style=alphabetic,sorting=ynt]{biblatex}
\appto\bibsetup{\sloppy\emergencystretch=1em} \usepackage[automark,pagestyleset=standard,markcase=upper]{scrlayer-scrpage} 
\usepackage{zref-clever} 
\zcsetup{cap}
\usepackage{aliascnt}

\theoremstyle{definition}

\newtheorem{theorem}{Theorem}[section]

\newaliascnt{definition}{theorem}
\newtheorem{definition}[definition]{Definition}
\aliascntresetthe{definition}

\newaliascnt{lemma}{theorem}
\newtheorem{lemma}[lemma]{Lemma}
\aliascntresetthe{lemma}

\newaliascnt{remark}{theorem}
\newtheorem{remark}[remark]{Remark}
\aliascntresetthe{remark}

\newaliascnt{corollary}{theorem}
\newtheorem{corollary}[corollary]{Corollary}
\aliascntresetthe{corollary}

\newaliascnt{example}{theorem}
\newtheorem{example}[example]{Example}
\aliascntresetthe{example}

\newaliascnt{proposition}{theorem}
\newtheorem{proposition}[proposition]{Proposition}
\aliascntresetthe{proposition}

\makeatletter
\renewenvironment{proof}[1][Proof]{%
  \par\pushQED{\qed}%
  \normalfont
  \trivlist
  \item[\hskip\labelsep\bfseries #1\@addpunct{.}]%
}{%
  \popQED\endtrivlist\ignorespacesafterend
}
\renewcommand*\@pnumwidth{2em} 
\makeatother

\newcommand{\restrict}[1]{\left.\kern-\nulldelimiterspace\right|_{#1}}

\DeclareMathOperator{\R}{{\mathbb R}}

\DeclareMathOperator{\dist}{dist}

\DeclareMathOperator{\const}{const}

\DeclareMathOperator{\divg}{div}

\title{The Parabolic Harnack Inequality on Weighted Riemannian Manifolds}
\author{Stefan Christian Kohlmeier}
\date{\today}
\publishers{}

\begin{document}

\maketitle

\begin{abstract}
    We establish the parabolic Harnack inequality on weighted Riemannian manifolds for a large class of parabolic differential operators building on an approach due to Alexander Grigor'yan.
\end{abstract}

\section{Introduction}\label{sec:introduction}

Let \((M, g, \mu)\) be a smooth, non-compact, connected and complete weighted Riemannian manifold, which may have a boundary. 

Our main problem is to establish a parabolic Harnack inequality on Riemannian manifolds for a general class of operators under minimal assumptions on the underlying space \((M,g,\mu\)).

We briefly review some historical background on the problem. Peter Li and Shing-Tung Yau proved in 1986, assuming $M$ has non-negative Ricci curvature and convex boundary, the famous gradient estimate
\begin{equation*}
	\frac{n}{2t} \geq \frac{|\nabla u|^2}{u^2} - \frac{\partial_t u}{u}
\end{equation*}
for solutions $u$ of the heat equation in $M$ with Neumann condition on $\partial M$. This estimate implies both the parabolic Harnack inequality and two-sided Gaussian bounds of the heat kernel; see \cite{liyau1986kernel,li2012geometric}. At this point of time, it was well known among experts that the parabolic Harnack inequality is equivalent to two-sided Gaussian heat kernel estimates via methods from \cite{fabesstroock1986harnack,liyau1986kernel}. In 1991, Alexander Grigor'yan showed in \cite{grigoryan1991heat} building on Landis’s work from \cite{landis1998second} that on non-compact Riemannian manifolds the assumption of non-negative Ricci curvature can be replaced by the weaker requirement of volume doubling together with the (scale invariant) Poincar\'e inequality. Independently, Laurent Saloff-Coste proved the same result by adapting the method from \cite{moser1964harnack} in 1992; see \cite{saloff2002aspects,saloff1992note}. He also proved that volume doubling and the Poincar\'e inequality are a necessary condition for the parabolic Harnack inequality.

In his work, Grigor'yan also indicates that the argument extends to more general uniformly parabolic divergence form operators; see \cite{grigoryan1991heat}.

The purpose of this paper is to carry out this extension in detail in the setting of weighted Riemannian manifolds, possibly with boundary. This is a result obtained in my Master's thesis \cite{kohlmeier2025gaussian}. The thesis has not been published, but is available upon request. Another purpose of this paper is to give a detailed account of Landis' and Grigor'yans methods of proving the parabolic Harnack inequality, since it does not seem to occure in the literature as often.

Let us consider the operator defined by
\begin{equation*}
	Lu := p \partial_t u - \divg(U \nabla u).
\end{equation*}
We will show: 
\begin{theorem}\label{thm:intro_harnack}
    Assume $M$ has the following two geometric properties:
	\begin{enumerate}[label=(\alph*)]
		\item\label{enum:introduction:1} There exists a real number $A > 0$ such that for arbitrary \(z \in M\) and \(R > 0\)
		\begin{equation*}
			V(z,2R) \leq A V(z,R), \quad \text{(Volume Doubling)}
		\end{equation*}
		where \(V(z,R) := \mu(B(z,R))\) is the volume function.
		\item\label{enum:introduction:2} There exists a real number \(a > 0\) such that for any \(z \in M\), \(R > 0\) and smooth function \(f \in C^\infty(B(z,R))\)
		\begin{equation*}
			\int_{B(z,R)} |\nabla f|^2d\mu \geq \frac{a}{R^2} \inf_{\xi \in \mathbb{R}} \int_{B(z,R)}(f-\xi)^2d\mu \quad \text{(Poincar\'e Inequality)}
		\end{equation*}
		is satisfied.
	\end{enumerate}
    Then, any solution of the parabolic problem
    \begin{equation*}
	   \begin{cases}
		      Lu &= 0\\
		      \langle \nabla u, n\rangle_U \restrict{\partial M \cap B(z,R)} &= 0
	   \end{cases}
    \end{equation*}
    satisfies the parabolic Harnack inequality
    \begin{equation*}
	   \sup_{B(z,R) \times (R^2, 2R^2)} u \leq H \inf_{B(z,R) \times (3R^2, 4R^2)} u.
    \end{equation*}
\end{theorem}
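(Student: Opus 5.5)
We follow the Landis--Grigor'yan approach, which avoids Moser iteration. We assume, as the paper implicitly does, that $p$ is bounded between positive constants, that $U$ is a symmetric, uniformly elliptic matrix field, and that $u$ is a non-negative solution in a slightly larger cylinder, say $B(z,2R)\times(0,4R^2)$. All constants will depend only on $A$, $a$ and the ellipticity constants.

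\emph{Step 1: mean value inequality.} First we derive from volume doubling and the Poincar\'e inequality a relative Faber--Krahn inequality. For open $\Omega\subset B(x,r)$ this reads
\begin{equation*}
\lambda_1(\Omega)\geq \frac{c}{r^2}\Bigl(\frac{V(x,r)}{\mu(\Omega)}\Bigr)^{\nu}.
\end{equation*}
Here $\lambda_1$ is the bottom of the spectrum of $\divg(U\nabla\cdot)$ with Dirichlet condition on $\partial\Omega\setminus\partial M$ and Neumann condition on $\partial M$. The uniform ellipticity of $U$ and the bounds on $p$ make this equivalent to the unweighted Dirichlet form.

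From this, an iteration of $L^2$ energy estimates on shrinking cylinders gives an $L^2\to L^\infty$ mean value inequality for non-negative subsolutions:
\begin{equation*}
\sup_{Q'}u^2\leq \frac{C}{R^2V(z,R)}\int_Q u^2\,d\mu\,dt.
\end{equation*}
The energy estimates come from testing $Lu\le 0$ against $\varphi^2(u-k)_+$ with cutoff functions $\varphi$; the Neumann condition kills the boundary term, since $\langle\nabla u,n\rangle_U=0$ on $\partial M$. At each level, the Faber--Krahn inequality is applied to the superlevel set $\{u>k\}$, whose measure is controlled by Chebyshev.

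\emph{Step 2: Landis' growth lemma.} This is the key step. We show there are $\varepsilon,\delta\in(0,1)$ with the following property. Let $v\ge 0$ be a supersolution in a cylinder $Q=B(x,r)\times(s-r^2,s)$ satisfying
\begin{equation*}
\frac{|\{v\geq 1\}\cap Q|}{|Q|}\geq \delta .
\end{equation*}
Then $v\geq \varepsilon$ on a smaller, later cylinder. To prove it, we apply Step 1 to the subsolution $w=(1-v)_+$. The Faber--Krahn inequality is used quantitatively on the set $\{w>0\}$, which has measure at most $(1-\delta)|Q|$, so $\lambda_1$ of this set gains a factor. Together with the doubling property, this gives $\sup w\leq 1-\varepsilon$ on the inner cylinder.

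A second form of the lemma, the "$\varepsilon$-$\delta$ lemma", needs to be derived from this. It is what allows positivity sets of small density to propagate forward in time and outward in space. The derivation uses a chain of balls connecting nearby points, which exists because $M$ is connected and complete, with the number of links controlled by doubling.

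\emph{Step 3: Harnack via Krylov--Safonov/Landis iteration.} Normalize so that $\inf_{B(z,R)\times(3R^2,4R^2)}u=1$, and assume $\sup_{B(z,R)\times(R^2,2R^2)}u$ is huge. Applying the growth lemma repeatedly in a backward-in-time chain of cylinders, we construct a sequence of points $(x_k,t_k)$ with
\begin{equation*}
u(x_k,t_k)\geq \varepsilon^{-k}\,u(x_0,t_0),
\end{equation*}
while the cylinder sizes shrink geometrically, so the points stay in the domain. Each step runs as follows. If $u\geq \tfrac12 u(x_k,t_k)$ on a $\delta$-portion of a suitable cylinder, the growth lemma forces $u$ to be large at a later time. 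Propagating this positivity forward to the bottom cylinder $B(z,R)\times(3R^2,4R^2)$ then contradicts the infimum being $1$. Otherwise the mean value inequality of Step 1, applied to $u-\tfrac12u(x_k,t_k)$, locates a nearby point with a larger value.

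Unbounded growth along the sequence contradicts the local boundedness of $u$, which yields $H$. The main obstacle is Step 2: making the growth lemma quantitative using only doubling and Poincar\'e, uniformly in the scale and with the Neumann boundary. I would try first to deduce it from the relative Faber--Krahn inequality, following Grigor'yan.
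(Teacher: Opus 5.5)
\textbf{There is a genuine gap in Step 2.}

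Your Step 1 is the paper's Section 3. Your Step 3 is a workable variant of the paper's Landis argument (Lemma of Growth, Iterated Weak Harnack, Pointwise Harnack). In fact, using the mean value inequality on $(u-\tfrac12 u(x_k,t_k))_+$ to locate a point of doubled value is shorter than the paper's proof of the Lemma of Growth. This needs the ratio of your geometrically shrinking radii to be tuned so that the per-step growth beats the polynomial loss $(r_k/R)^{l}$ of the propagation step; the paper avoids this by choosing $r_k$ through a density condition and using $\sum r_k\geq R/2$.

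The gap is Step 2. It is the paper's Weak Harnack Inequality (Theorem 4.1), not Landis' growth lemma, and your proposed proof of it does not work.

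\emph{Why your argument fails.} Since $0\le w=(1-v)_+\le 1$ vanishes on a set of density $\delta$, Step 1 only gives $\sup_{Q'}w^2\le C(1-\delta)$, where $C$ is large. Relative Faber--Krahn on a set of relative measure $1-\delta$ improves $\lambda_1$ only by the factor $(1-\delta)^{-\nu}$. So you get no contraction for small $\delta$. Moreover, your hypothesis is a space-time density. It gives no bound on $\mu(\{w(\cdot,t)>0\})$ for individual $t$, which is what the slice-wise Faber--Krahn step in the energy iteration needs.

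\emph{Why no argument of this kind can work.} Step 2 cannot follow from doubling, relative Faber--Krahn and the mean value inequality alone. Consider $\mathbb{R}^n\#\mathbb{R}^n$ with $n\ge 3$. It satisfies volume doubling and relative Faber--Krahn, hence by the paper's Section 3 also the mean value inequality. Yet the bounded harmonic function $h$ with $h\to 1$ at infinity on one sheet and $h\to 0$ on the other has $\{h\ge 1/2\}$ of density about $1/2$ in balls $B(o,R)$ around the neck, while $\inf_{B(o,R)}h\simeq R^{2-n}\to 0$. So the Poincar\'e inequality must enter beyond its Faber--Krahn consequence.

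\emph{How the paper does it.} The paper uses the logarithmic transform. For positive superparabolic $u$, the function $v=-\ln(u/\kappa)$ satisfies $p\partial_t v-\divg(U\nabla v)=-Lu/u-\|\nabla v\|_U^2\le-\|\nabla v\|_U^2$. Test this with $\eta^2$, where $\eta=(\dist(\cdot,\partial B(z,2R))/2R)^{\alpha/2}$. Combine it with Grigor'yan's weighted Poincar\'e inequality (Theorem 2.4), whose right-hand side carries the factor $\mu(H_t)$ with $H_t=\{u(\cdot,t)\ge\kappa\}\cap B(z,R)$. This gives a Riccati inequality $I'\le -K(t)I^2+D$ for $I(t)=\int v_+\eta^2\,d\mu$, with $K(t)\simeq\mu(H_t)/(R^2V(z,2R)^2)$ and $D\simeq V(z,2R)/R^2$.

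Integrating $K$ in time is exactly where the space-time density enters: $\int_0^{R^2}K\gtrsim \delta/V(z,2R)$. Hence $I(t)\lesssim V(z,2R)/\delta$ on $[R^2,4R^2]$. The unweighted Poincar\'e inequality (Theorem 2.3), together with the energy bound on $\nabla v_+$, then gives $\int_{R^2}^{4R^2}\int_{B(z,4R/3)}v_+^2\,d\mu\,dt\lesssim R^2V(z,2R)/\delta^2$. Only at this point is the mean value inequality applied, to the subsolution $v_+$. It yields $v_+\le C/\delta$, i.e.\ $u\ge e^{-C/\delta}\kappa$ on $\amalg_{2R,+}$.

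You need this step, or an equivalent one that uses Poincar\'e directly, such as De Giorgi's isoperimetric lemma or Moser's logarithmic/John--Nirenberg argument. Without it, the propagation of positivity in your Step 3 has nothing to rest on.
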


As a consequence we obtain

\begin{theorem}\label{thm:intro_harnack_heat_equivalent_condition}
    The Harnack inequality for solutions of the the heat equation is equivalent to the Harnack inequality for parabolic differential operators \(L\).
\end{theorem}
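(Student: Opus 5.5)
The plan is to pass through the geometric characterisation of the Harnack inequality, so that neither direction needs a direct comparison of solutions of two different equations. Let \(H_{\mathrm{heat}}\) denote the statement that the parabolic Harnack inequality holds, with a uniform constant, for positive solutions of the heat equation \(\partial_t u - \Delta_\mu u = 0\) with Neumann boundary condition. Let \(H_L\) denote the same statement for positive solutions of \(Lu = 0\) with \(\langle \nabla u, n\rangle_U = 0\) on \(\partial M\), for every admissible pair \((p,U)\). Admissible means that \(p\) is bounded above and below by positive constants and \(U\) is uniformly elliptic and bounded, and the Harnack constant may depend on these bounds. I will prove \(H_{\mathrm{heat}} \Rightarrow\) (volume doubling and Poincaré) \(\Rightarrow H_L\). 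The reverse implication \(H_L \Rightarrow H_{\mathrm{heat}}\) is trivial, since \(p\equiv 1\), \(U = \mathrm{id}\) is admissible and gives \(L = \partial_t - \Delta_\mu\).

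\textbf{Step 1.} The implication from conditions \ref{enum:introduction:1} and \ref{enum:introduction:2} to \(H_L\) is exactly Theorem \ref{thm:intro_harnack}. It is applied with the constants of \ref{enum:introduction:1} and \ref{enum:introduction:2}, and the resulting \(H\) depends only on \(A\), \(a\) and the ellipticity bounds of \(p\) and \(U\).

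\textbf{Step 2, the main obstacle.} Assume \(H_{\mathrm{heat}}\). I need to derive volume doubling and the Poincaré inequality, following Saloff-Coste's necessity argument.

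First I obtain two-sided Gaussian bounds for the Neumann heat kernel. Applying the Harnack inequality to \(u(t,x) = p_t(x,y)\) and iterating along chains of balls gives a near-diagonal lower bound \(p_{R^2}(x,y) \geq c/V(x,R)\) for \(d(x,y) \leq R\). The upper bound \(p_{R^2}(x,x) \leq C/V(x,R)\) comes from the Harnack inequality combined with \(\int_M p_t(x,y)\,d\mu(y) \leq 1\). The chaining uses completeness, connectedness and the geodesic structure of \(M\).

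Volume doubling then follows by comparing the on-diagonal bounds at times \(R^2\) and \(4R^2\): Harnack gives \(p_{R^2}(x,x) \leq C p_{4R^2}(x,x)\), hence \(V(x,2R) \leq C' V(x,R)\).

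For the Poincaré inequality, let \(P_t\) be the Neumann heat semigroup and write
\[
\int_B |f - P_{R^2}f(x)|^2\,d\mu \lesssim \int_B P_{R^2}\bigl(|f - P_{R^2}f(x)|^2\bigr)\,d\mu ,
\]
which uses the lower kernel bound together with doubling. The right-hand side is controlled by \(R^2\int |\nabla f|^2\,d\mu\) through the identity \(\|f\|^2 - \|P_t f\|^2 = 2\int_0^t \|\nabla P_s f\|^2\,ds\). This yields a weak Poincaré inequality on \(B\), with the gradient integrated over a dilated ball. The Jerison/Whitney covering argument upgrades it to the strong form in \ref{enum:introduction:2}, using doubling and the length structure of \(M\).

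The delicate points are the boundary and the localisation: \(f\) is only defined on \(B(z,R)\), so I will first cut it off to a slightly smaller ball and then use the Whitney argument. I will treat this step carefully, or cite Saloff-Coste's proof, checking that the Neumann condition, the weight \(\mu\) and the possible boundary of \(M\) do not interfere.

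Combining Steps 1 and 2 gives \(H_{\mathrm{heat}} \Leftrightarrow\) (volume doubling and Poincaré) \(\Leftrightarrow H_L\), which is the claim.
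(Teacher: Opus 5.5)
Your proposal follows the same route as the paper: heat-equation Harnack \(\Rightarrow\) two-sided Gaussian bounds \(\Rightarrow\) volume doubling and Poincar\'e \(\Rightarrow\) Harnack for \(L\) by the main theorem, and you make explicit the converse via \(p\equiv 1\), \(U=\mathrm{id}\), which the paper leaves implicit. The paper closes the middle implications purely by citing Grigor'yan and Saloff-Coste, and you should rely on that citation rather than on your sketch, because the sketch is incomplete in two places: the near-diagonal lower bound needs a mass-concentration estimate such as \(\int_{B(x,\sqrt{t})} p_t(x,y)\,d\mu(y)\geq c\), which Harnack chaining alone does not give, and cutting off \(f\) in the Poincar\'e step produces a term \(\int (f-\xi)^2|\nabla\varphi|^2\,d\mu\) that the gradient of \(f\) does not control.
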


Finally, I would like to express my gratitude to Simon Nowak, the supervisor of my Master’s thesis, and to Alexander Grigor’yan for their useful discussions and support in completing this thesis.

\section{Prerequesites}\label{sec:prerequisites}

In this section, we briefly review the facts and definitions that are needed for the proof.

\subsection{Basic Notions}
\label{subsec:basic_notions}
Throughout the paper we write \(\const\) for a positive constant which may change from line to line, and \(\const_{x,y,z, \dots}\) for a positive constant that may depend on \(x,y,z,\dots\).

For two non-negative functions \(f,g\) we use the notation \(f \lesssim g\) if there exists an absolute constant \(C > 0\) such that the inequality
\begin{equation*}
	f \leq Cg
\end{equation*}
is satisfied. The relation \(f \gtrsim g\) is defined in the same manner. We write \(f \simeq g\) if both \(f \lesssim g\) and \(f \gtrsim g\) hold, i.e. there exist absolute constants \(C,c > 0\) such that
\begin{equation*}
	cg \leq f \leq Cg.
\end{equation*}

Let \((M, g, \mu)\) be a smooth weighted Riemannian manifold with or without boundary. The measure \(\mu\) is induced by a smooth, positive function \(D \in C^\infty(M)\) named density such that \(d\mu = Dd\nu_g\), where \(\nu_g\) is the canonical Riemannian measure. In local coordinates the central objects divergence \(\divg_g\) and the Laplace Beltrami operator \(\Delta_g\) are given by
\begin{equation*}
    \divg_g v = \frac{1}{\sqrt{\det g}} \frac{\partial}{\partial x^k} \left(\sqrt{\det g} v^k\right)
\end{equation*}
for all smooth vector fields \(v \in C^\infty(TM)\) and
\begin{equation*}
    \Delta_g = \divg_g(\nabla_g f) = \frac{1}{\sqrt{\det g}} \frac{\partial}{\partial x^i} \left(\sqrt{\det g} g^{ij} \frac{\partial f}{\partial x^j}\right)
\end{equation*}
for all smooth functions \(f \in C^\infty(M)\). We have used the Einstein summation here. We also introduce the notion of the weighted divergence \(\divg_{g, \mu}\) and the weighted Laplace Beltrami operator \(\Delta_{g, \mu}\)
\begin{equation*}
    \divg_{g, \mu} v = \frac{1}{D} \divg_g Dv = \frac{1}{D \sqrt{\det g}} \frac{\partial}{\partial x^k} \left(D \sqrt{\det g}v^k\right)
\end{equation*}
for all smooth vector fields \(v \in C^\infty(TM)\) and
\begin{equation*}
    \Delta_{g, \mu} f = \divg_{g, \mu}(\nabla_g f) = \frac{1}{D \sqrt{\det g}} \frac{1}{\partial x^i} \left(D \sqrt{\det g} g^{ij} \frac{\partial f}{\partial ^j}\right)
\end{equation*}
for all smooth functions \(f \in C^\infty(M)\), respectively. Since we are mostly dealing with weighted manifold in the succeeding content, we will avoid lower indices for simplicity if it is clear what is meant (i.e. \(\nabla_g = \nabla\), \(\divg_{g, \mu} = \divg\) and \(\Delta_{g, \mu} = \Delta\)).

\subsection{The Divergence Theorem}\label{subsec:the_divergence_theorem}
For the sake of completeness, we state the Divergence Theorem for weighted Riemannian manifolds.

\begin{theorem}[Weighted Divergence Theorem]\label{thm:weighted_divergence_theorem}
    We have for any smooth and compactly supported vector field \(v \in C_0^\infty(TM)\) the identity
    \begin{equation*}
        \int_M \divg_{g, \mu} v d\mu = \int_{\partial M} \langle v, n\rangle_g d\widetilde{\mu},
    \end{equation*}
    where \(n\) is the outward pointing unit normal vector field along \(\partial M\) and \(d\widetilde{\mu} =  d\widetilde{\nu}_g\) is the induced measure on \(\partial M\).
\end{theorem}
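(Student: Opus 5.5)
The plan is to reduce the weighted statement to the classical Riemannian Divergence Theorem with boundary, \(\int_M \divg_g w \, d\nu_g = \int_{\partial M} \langle w, n\rangle_g \, d\widetilde{\nu}_g\) for \(w \in C_0^\infty(TM)\), which I take as known. The density \(D\) is smooth and positive, and \(v\) is smooth with compact support, so the rescaled field \(w := Dv\) is again smooth and compactly supported. That makes it admissible in the unweighted theorem.

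The key step is the pointwise identity \(\divg_{g,\mu} v = \frac{1}{D}\divg_g(Dv)\), which is just the definition given in Subsection~\ref{subsec:basic_notions}. Combined with \(d\mu = D\,d\nu_g\), it gives
\begin{equation*}
    \int_M \divg_{g,\mu} v \, d\mu = \int_M \frac{1}{D}\divg_g(Dv)\, D\, d\nu_g = \int_M \divg_g(Dv)\, d\nu_g .
\end{equation*}
Applying the unweighted theorem to \(w = Dv\) then yields
\begin{equation*}
    \int_{\partial M} \langle Dv, n\rangle_g \, d\widetilde{\nu}_g = \int_{\partial M} \langle v, n\rangle_g \, D \, d\widetilde{\nu}_g .
\end{equation*}

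The only real subtlety is this boundary term: the computation produces the boundary measure weighted by \(D\restrict{\partial M}\). So the induced measure \(d\widetilde{\mu}\) in the statement should be read as \(D\,d\widetilde{\nu}_g\), the restriction of the density to the boundary times the Riemannian hypersurface measure. With that reading the identity follows at once. If instead one insisted on \(d\widetilde{\mu} = d\widetilde{\nu}_g\) literally, the formula would fail for non-constant \(D\), and I would flag this as a notational point.

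For completeness, the unweighted theorem itself can be checked in two steps. First use a partition of unity subordinate to boundary charts, which reduces it to a field supported in a half-space chart \(\{x^n \ge 0\}\). There the local formula \(\divg_g w = \frac{1}{\sqrt{\det g}}\partial_k(\sqrt{\det g}\, w^k)\) turns the integral into \(\int \partial_k(\sqrt{\det g}\, w^k)\,dx\), and Fubini leaves only the \(x^n\)-boundary contribution \(-\int \sqrt{\det g}\, w^n \, dx'\). One then identifies this with \(\langle w,n\rangle_g\, d\widetilde{\nu}_g\) using \(n = -\nabla x^n/|\nabla x^n|\). This identification is routine but is the most computational part of the argument.
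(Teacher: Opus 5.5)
Your proof is correct and follows the paper's route: the paper just says the result follows from the ordinary Divergence Theorem, and you make that explicit by applying it to $Dv$ and using $\divg_{g,\mu} v = \frac{1}{D}\divg_g(Dv)$ together with $d\mu = D\,d\nu_g$. Your flag on the boundary term is also right: the computation gives $\int_{\partial M}\langle v,n\rangle_g\, D\,d\widetilde{\nu}_g$, so the boundary measure must be read as $D\restrict{\partial M}\,d\widetilde{\nu}_g$, not $d\widetilde{\nu}_g$ as the statement literally writes; the two agree only if $D \equiv 1$ on $\partial M$.
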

\begin{proof}
    Follows immediatly from the ordinary Divergence Theorem.
\end{proof}

\begin{corollary}[Integration by Parts]\label{cor:integration_by_parts}
    In the setting of \zcref{thm:weighted_divergence_theorem} we have for any smooth function \(u \in C^\infty(M)\) the formula
    \begin{equation*}
        \int_M \langle \nabla u, v\rangle_g d\nu_g = \int_{\partial M} u \langle v, n\rangle_g d\widetilde{\nu}_g - \int_M u \divg_{\mu, g} v d\widetilde{\mu}.
    \end{equation*}
\end{corollary}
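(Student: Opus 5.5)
The plan is to derive the corollary from \zcref{thm:weighted_divergence_theorem} by applying it to the product vector field \(uv\). Since \(v \in C_0^\infty(TM)\) and \(u \in C^\infty(M)\), the field \(uv\) is smooth and compactly supported, so the theorem applies and gives
\begin{equation*}
    \int_M \divg_{g,\mu}(uv)\, d\mu = \int_{\partial M} u \langle v, n\rangle_g \, d\widetilde{\mu}.
\end{equation*}

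The main computational step is the Leibniz rule for the weighted divergence,
\begin{equation*}
    \divg_{g,\mu}(uv) = \langle \nabla u, v\rangle_g + u \divg_{g,\mu} v .
\end{equation*}
I would check this in local coordinates from the formula \(\divg_{g,\mu} w = \frac{1}{D\sqrt{\det g}} \partial_k (D\sqrt{\det g}\, w^k)\). Applying the product rule to \(\partial_k(D\sqrt{\det g}\, u v^k)\) produces two terms. The first is \(D\sqrt{\det g}\, v^k \partial_k u\), which after division gives \(v^k \partial_k u = du(v) = \langle \nabla u, v\rangle_g\). The second is \(u\, \partial_k(D\sqrt{\det g}\, v^k)\), which after division gives \(u \divg_{g,\mu} v\). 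Since both sides of the identity are coordinate independent, the local computation proves it globally.

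Substituting the Leibniz rule into the first display and moving \(\int_M u \divg_{g,\mu} v \, d\mu\) to the right-hand side gives
\begin{equation*}
    \int_M \langle \nabla u, v\rangle_g \, d\mu = \int_{\partial M} u\langle v,n\rangle_g \, d\widetilde{\mu} - \int_M u \divg_{g,\mu} v \, d\mu .
\end{equation*}
This is the claimed formula with all volume integrals taken with respect to \(\mu\) and the boundary integral with respect to the induced measure \(\widetilde{\mu}=\widetilde{\nu}_g\).

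The one point that needs care is the measures written in the statement. As printed, the statement uses \(d\nu_g\) on the left and \(d\widetilde{\mu}\) in the last volume integral, which appear to be typos. I would read both as \(d\mu\), the only reading under which the identity is consistent with the weighted divergence. In the unweighted case \(D\equiv 1\) all of these measures coincide, so nothing changes there. Apart from this notational issue, no analytic obstacle arises, since compact support of \(uv\) guarantees that every integral is finite.
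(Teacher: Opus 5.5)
Your proof is correct and is exactly the argument the paper intends: the paper gives no separate proof, treating the corollary as immediate from the weighted divergence theorem applied to \(uv\) together with the Leibniz rule \(\divg_{g,\mu}(uv)=\langle\nabla u,v\rangle_g+u\divg_{g,\mu}v\), and reading both volume measures as \(d\mu\) is the right repair of the misprints. One caveat: since \(\divg_{g,\mu}(uv)\,d\mu=\divg_g(Duv)\,d\nu_g\), the boundary measure must be \(d\widetilde{\mu}=D\,d\widetilde{\nu}_g\), so you should not endorse the paper's identification \(\widetilde{\mu}=\widetilde{\nu}_g\) unless \(D\equiv 1\) on \(\partial M\); your derivation is valid relative to the theorem as stated, but the boundary term written with \(d\widetilde{\nu}_g\) is missing this density.
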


\subsection{Sobolev Spaces}\label{subsec:sobolev_spaces}
Let \((M,g, \mu)\) be a weighted manifold, \(\Omega \subset M\) a domain and let us denote by \(\vec{L}^2(\Omega)\) the space of all measurable vector fields \(v(x)\) on \(\Omega\) such that \(|v|_g \in L^2(\Omega)\). Analogously, we define \(\vec{L}_{loc}^2(\Omega)\) to be the space of all measurable vector fields \(v(x)\) on \(M\) such that \(|v|_g \in L^2_{loc}(\Omega)\). The space \(\vec{L}^2(\Omega)\) is a Hilbert space with respect to the inner product
\begin{equation*}
	(v,w)_{\vec{L}^2(\Omega)} := \int_\Omega \langle v, w\rangle_g d\mu.
\end{equation*}\par

As usual in weak formulations, we denote by \(\mathcal{D}(\Omega)\) the space of smooth and compactly supported functions on \(\Omega\) and by \(\vec{\mathcal{D}}(\Omega)\) the space of smooth and compactly supported vector fields on \(\Omega\). Note that compactly supported functions in \(\Omega\)
  are allowed to be non-zero on \(\Omega \cap \partial M\).

\begin{definition}\label{def:weak_gradient}
	Given \(u \in L_{loc}^2(M)\), a \emph{weak gradient} of \(u\) is a vector field \(\nabla u \in \vec{L}^2_{loc}(M)\) such that
	\begin{equation*}
		\int_M u \divg \psi d\mu = - \int_M \langle v, \psi \rangle_g d\mu \quad \text{for all} \ \psi \in \vec{\mathcal{D}}(\mathring{M}).
	\end{equation*}
\end{definition}

Next, we define the Sobolev space \(W^{1,2}\) on \(M\). Note that it is also possible to define Sobolev spaces of higher order, but these are beyond our interest.

\begin{definition}\label{def:sobolev_space}
The \emph{Sobolev space} \(W^{1,2}(\Omega)\) is defined by
\begin{equation*}
	W^{1,2}(\Omega) := \left\{u \in L^2(\Omega) : \nabla u \in \vec{L}^2(\Omega) \right\},
\end{equation*}
and endowed with the inner product
\begin{equation*}
	(u,v)_{W^{1,2}} := (u,v)_{L^2} + (\nabla u,\nabla v)_{\vec{L}^2}.
\end{equation*}\par
We also use the following related spaces:
	\begin{itemize}
  		\item \(W^{1,2}_{loc}(\Omega)\): functions \(u\) such that \(u \in W^{1,2}(U)\) for all pre-compact domains \(U \subset \Omega\);
  		\item \(W^{1,2}_0(\Omega)\): the closure of \(\mathcal{D}(\Omega)\) in \(W^{1,2}(\Omega)\);
  		\item \(W^{1,2}_c(\Omega)\): the subspace of compactly supported functions in \(W^{1,2}(\Omega)\).
	\end{itemize}
\end{definition}

\begin{remark}\label{rem:meyers_serrin_on_manifolds}
	There is also the possibility to define \(W^{1,2}\) as the completion of smooth functions in \(L^2\) by the norm induced via the inner product mentioned in \zcref{def:sobolev_space}. A proof can be found in \cite{chan2024meyersserrintheoremriemannianmanifolds}. The argument relies on a local smooth approximation theorem for domains in \(\R^n\) with smooth boundary, as stated in \cite{adamsfournier2003sobolev,evans2010partial}. We will not use this characterization in what follows.
\end{remark}

\subsection{The Weak Laplace Equation}\label{sec:the_weak_Laplace_equation}

\begin{definition}\label{def:weak_laplacian}
	Let \(u \in W^{1,2}_{loc}(\Omega)\) and \(f \in L_{loc}^2(\Omega)\). We say that the equation
	\begin{equation*}
		\begin{cases}
			\Delta u &= f\\
			\langle \nabla u, n\rangle_g \restrict{\partial M \cap \Omega} &= 0
		\end{cases}
	\end{equation*}
	holds weakly in \(\Omega\) if
	\begin{equation*}
		\int_\Omega \langle \nabla u, \nabla \varphi\rangle_g d\mu = - \int_\Omega f\varphi d\mu \quad \text{for all} \ \varphi \in \mathcal{D}(\Omega).
	\end{equation*}
	\(n\) is the outward-pointing unit normal vector field along \(\partial M\).
\end{definition}

The preceding definition is motivated by the following observation: Let \(\Omega\) have a smooth boundary and consider a smooth function \(u \in C^\infty(\Omega)\). We have
\begin{equation*}
	\begin{split}
		\int_\Omega \varphi \Delta u d\mu &= -\int_{\Omega} \langle \nabla u, \nabla \varphi\rangle_g d\mu + \int_{\Omega \cap \partial M} \langle \nabla u, n\rangle \varphi d\widetilde{\mu} + \int_{\partial \Omega} \langle \nabla u, n\rangle \varphi d\widetilde{\mu}\\
		&= \int_{\Omega} f \varphi d\mu + \int_{\Omega \cap \partial M} \langle \nabla u, n\rangle \varphi d\widetilde{\mu}
	\end{split}
\end{equation*}
for all functions \(\varphi \in \mathcal{D}(\Omega)\). Therefore, if the weak formulation is valid, then \(\Delta u = f\) and \(\langle u, n \rangle \restrict{\Omega \cap \partial M} = 0\) follow.

We refer to \cite{chavel1984eigenvalues,davies1989heat,grigoryan2009heat,kohlmeier2025gaussian} for background on heat kernels, which will not be used in what follows.

\subsection{Known Results}\label{subsec:known_results}

In this subsection, we assume that \((M,g,\mu)\) is a smooth, non-compact, connected, and complete weighted Riemannian manifold, possibly with boundary, satisfying the volume doubling property as well as the Poincaré inequality stated in \ref{enum:introduction:1}. The latter may equivalently be formulated as follows; see \cite{jerison1986poincare}:

There exists a real number \(a > 0\) and a natural number \(N \geq 1\) such that for any \(z \in M\), \(R > 0\) and \(f \in C^\infty(B(z, NR))\) the inequality
\begin{equation*}
	\int_{B(z, NR)} |\nabla f|^2 d\mu \geq \frac{a}{R^2} \inf _{\xi \in \R} \int_{B(z,R)} (f-\xi)^2d\mu. \quad \text{(Weak Poincar\'e Ineq.)} 
\end{equation*}
is satisfied.

To keep the argument general, we keep track of the constant \(N\).

\begin{proposition}[{\cite[Theorem 1.1]{grigoryan1991heat}}]\label{prop:balls_ratio_estimate}
	We have for any \(x,y \in M\) and real numbers \(0 < r \leq R\) such that the intersection \(\overline{B(x,r)} \cap \overline{B(y,R)}\) is non-empty the estimate
	\begin{equation*}
		A_2 \left(\frac{R}{r}\right)^{a_2} \leq \frac{V(x, R)}{V(y, r)} \leq A_1 \left(\frac{R}{r}\right)^{a_1},
	\end{equation*}
	where \(A_i\) and \(a_i\) are positive real numbers only depending on \(A\) for \(i = 1,2\).
\end{proposition}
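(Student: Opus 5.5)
Both bounds will be obtained by iterating volume doubling along a chain of radii. The whole task is to compare balls with different centres, and the hypothesis that the closed balls meet lets us trade one centre for the other. Fix a point \(w \in \overline{B(x,r)} \cap \overline{B(y,R)}\). The triangle inequality gives \(d(x,y) \leq r + R \leq 2R\). Consequently
\begin{equation*}
	B(x,R) \subset B(y, 3R) \subset B(y,4R), \qquad B(y,r) \subset B(x, r + d(x,y)) \subset B(x,4R).
\end{equation*}

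\emph{Upper bound.} Let \(k \geq 0\) be the smallest integer with \(2^k r \geq 4R\), so that \(2^k < 8R/r\). From \(B(x,R) \subset B(y,2^k r)\) and \(k\) applications of volume doubling centred at \(y\),
\begin{equation*}
	V(x,R) \leq V(y, 2^k r) \leq A^k V(y,r) \leq A \left(\tfrac{8R}{r}\right)^{\log_2 A} V(y,r).
\end{equation*}
This is the upper bound with \(a_1 = \log_2 A\) and \(A_1 = 8^{a_1} A\), both depending only on \(A\).

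\emph{Lower bound.} This needs the reverse doubling property: there is \(\delta = \delta(A) > 0\) with \(V(z,2\rho) \geq (1+\delta) V(z,\rho)\) for every \(z\) and every \(\rho\) that is not too large compared with the size of \(M\). Since \(M\) is non-compact, connected and complete, every ball \(B(z,\rho)\) has nonempty complement. Hence there is a point \(q\) with \(d(z,q) = \tfrac{3}{2}\rho\): the distance function from \(z\) is continuous on the connected space \(M\) and unbounded, so it attains this value. Then \(B(q, \rho/2)\) is disjoint from \(B(z,\rho)\) and contained in \(B(z,2\rho)\). Moreover \(B(z,\rho) \subset B(q, 4\rho)\), so three doublings at \(q\) give \(V(q,\rho/2) \geq A^{-3} V(z,\rho)\). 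Therefore
\begin{equation*}
	V(z,2\rho) \geq (1 + A^{-3}) V(z,\rho).
\end{equation*}

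Now assume first \(R \geq 8r\), and let \(m\) be the largest integer with \(2^m \cdot 4r \leq R\), so that \(2^m \geq R/(8r)\). Since \(B(y,r) \subset B(x,4r)\), iterating reverse doubling at \(x\) gives
\begin{equation*}
	V(x,R) \geq (1+A^{-3})^m V(x,4r) \geq (1+A^{-3})^m V(y,r) \geq \left(\tfrac{R}{8r}\right)^{a_2} V(y,r),
\end{equation*}
with \(a_2 = \log_2(1+A^{-3})\). In the remaining case \(R < 8r\), monotonicity gives \(V(x,R) \geq V(x,r)\), and \(B(y,r) \subset B(x,3r)\) together with two doublings gives \(V(x,r) \geq A^{-2} V(y,r)\). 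Since \((R/r)^{a_2} < 8^{a_2}\) in this case, the bound holds there with a suitable constant. Taking the minimum of the constants from the two cases yields \(A_2\).

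\emph{Main obstacle.} The delicate step is reverse doubling: it is the only place where non-compactness, connectedness and completeness enter. It requires the existence of the point \(q\) at distance exactly \(\tfrac{3}{2}\rho\), for which we need \(M \setminus B(z,2\rho) \neq \emptyset\) and the intermediate value property of the distance function. When \(M\) has a boundary, we must make sure the intrinsic distance still has this property. It does, because \(M\) is a connected length space. The rest of the argument is bookkeeping of constants.
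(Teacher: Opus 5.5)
Your upper bound and your reverse doubling lemma \(V(z,2\rho)\geq(1+A^{-3})V(z,\rho)\) are correct. The lemma holds for every \(\rho>0\), since a complete non-compact \(M\) is unbounded by Hopf--Rinow in its length-space form. This split matches the paper's remark that the upper bound is pure doubling while the lower bound uses the structure of \(M\); the paper itself only cites Grigor'yan.

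The gap is the change of centre in the lower bound. You use \(B(y,r)\subset B(x,4r)\), but the hypothesis only gives \(d(x,y)\leq r+R\), which may be of order \(R\). Once \(d(x,y)>4r\), the point \(y\) itself lies outside \(B(x,4r)\). This is not just a matter of constants. Take \(\mathbb{R}^n\) with density \((1+|\cdot|^2)^{\alpha/2}\), \(\alpha>0\), which satisfies volume doubling, and choose \(x=0\), \(r=1\), \(|y|=R\) large. Then \(V(x,4r)\simeq 1\) while \(V(y,r)\simeq R^{\alpha}\). So the intermediate inequality \(V(x,4r)\gtrsim V(y,r)\) in your chain is false. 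Swapping centres at scale \(r\) genuinely costs a power of \(R/r\), so adjusting constants cannot rescue the order of your chain. The inclusion \(B(y,r)\subset B(x,3r)\) in your case \(R<8r\) is also wrong, but there only constants are affected, since \(B(y,r)\subset B(x,10r)\).

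The repair is to reverse the order: run reverse doubling at \(y\), and change centre only at scale \(R\), where it is cheap. Let \(m\geq 0\) be the largest integer with \(2^m r\leq R\), so that \(2^m>R/(2r)\). Then \(V(y,R)\geq(1+A^{-3})^m V(y,r)\geq\bigl(R/(2r)\bigr)^{a_2}V(y,r)\). Since \(d(x,y)\leq 2R\), you have \(B(y,R)\subset B(x,3R)\subset B(x,4R)\), hence \(V(y,R)\leq A^2V(x,R)\). Together these give \(V(x,R)\geq A^{-2}2^{-a_2}(R/r)^{a_2}V(y,r)\) for all \(0<r\leq R\), with no case distinction.
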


\begin{remark}\label{rem:balls_ratio_estimate}
	The upper bound in \zcref{prop:balls_ratio_estimate} is just obtained by manipulating the volume doubling proberty by algebraic manipulations whereas the lower bound follows from the special structure of the manifold.
\end{remark}

\begin{theorem}[Poincare-Type Inequality 1, {\cite[Theorem 1.2]{grigoryan1991heat}}]\label{thm:poincare_1}
	Let us fix \(z \in M\), \(R > 0\) and \(\varepsilon \in (0,1]\). For each function \(f \in W^{1,2}(B(z, (1+\varepsilon)R)\), we have the following estimates
	\begin{equation}\label{eq:thm:poincare_1:1}
		\int_{B(z, (1+\varepsilon)R)} |\nabla f_+|^2 d\mu \geq \const_{A,a,N} \frac{\varepsilon^\alpha \mu(H)}{R^2 V(z,R)} \int_{B(z,R)}f_+^2 d\mu
	\end{equation}
	and
	\begin{equation}\label{eq:thm:poincare_1:2}
		\int_{B(z, (1+\varepsilon)R} |\nabla f|^2 d\mu \geq \const_{A,a,N} \frac{\varepsilon^\alpha}{R^2} \inf_{\xi \in \R} \int_{B(z,R)} (f-\xi)^2 d\mu.
	\end{equation}
	Here \(H = \{f \leq 0\} \cap B(z,R)\) and \(\alpha=\alpha(A) > 0\).
\end{theorem}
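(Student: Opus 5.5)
We prove \eqref{eq:thm:poincare_1:2} first and then deduce \eqref{eq:thm:poincare_1:1} from it. The only inputs are the weak Poincar\'e inequality (with the fixed constant \(N\)) and volume doubling, via \zcref{prop:balls_ratio_estimate}. The basic tool is a Whitney-type covering of \(B(z,R)\) by small balls. Set \(\rho = \varepsilon R/(10N)\) and pick a maximal \(\rho\)-separated set \(\{x_i\} \subset B(z,R)\). Then the balls \(B_i = B(x_i,\rho)\) cover \(B(z,R)\), and the enlarged balls \(B(x_i, N\rho)\) all lie in \(B(z,(1+\varepsilon)R)\). By doubling these enlarged balls have bounded overlap, with a bound depending only on \(A\) and \(N\). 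The weak Poincar\'e inequality on each enlarged ball gives, with \(f_i\) the mean of \(f\) over \(B_i\),
\begin{equation*}
\sum_i \int_{B_i}(f-f_i)^2 d\mu \lesssim \rho^2 \int_{B(z,(1+\varepsilon)R)}|\nabla f|^2 d\mu .
\end{equation*}

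It remains to compare the local means \(f_i\) with a single constant \(\xi\). I would do this by a chaining argument. We may assume \(B(z,R)\) is connected through overlapping small balls, which holds after shrinking \(\rho\) and using geodesic segments from \(z\); these segments stay inside \(B(z,R)\) because \(M\) is complete. Then every \(x_i\) is joined to \(z\) by a chain \(B_{i_0}, \dots, B_{i_k}\) of consecutive intersecting balls, with \(k \lesssim R/\rho \simeq \varepsilon^{-1}\). For intersecting neighbours, doubling makes \(\mu(B_j \cap B_{j+1})\)-type comparisons available on a slightly larger ball, so \(|f_j - f_{j+1}|^2\) is controlled by the local Poincar\'e term divided by \(V(x_j,\rho)\). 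Summing along chains by Cauchy--Schwarz costs a factor \(k\). Weighting by \(V(x_i,\rho)\) and using \(V(z,R)/V(x_i,\rho) \lesssim \varepsilon^{-a_1}\) from \zcref{prop:balls_ratio_estimate}, we obtain \(\sum_i \mu(B_i)|f_i - f_{i_0}|^2 \lesssim \varepsilon^{-\alpha'} R^2 \int |\nabla f|^2\). A cleaner alternative is Jerison's counting of how many chains pass through a given ball. That alternative also yields a polynomial loss in \(\varepsilon\), and \(\varepsilon^{\alpha}\) with \(\alpha = \alpha(A)\) only absorbs factors like \(\varepsilon^{-2}\) and \(\varepsilon^{-a_1}\). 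Combining with the local estimate and taking \(\xi = f_{i_0}\) gives \eqref{eq:thm:poincare_1:2}.

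\textbf{Main obstacle.} The chain-counting step is the delicate part. There I would need to check that the total number of chains passing through a fixed ball, weighted by volume ratios, is bounded by a power of \(\varepsilon^{-1}\). This is where the lower bound in \zcref{prop:balls_ratio_estimate} and the geometry of balls in a complete manifold enter.

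For \eqref{eq:thm:poincare_1:1}, apply \eqref{eq:thm:poincare_1:2} to \(f_+\), which lies in \(W^{1,2}\) with \(|\nabla f_+| \le |\nabla f|\). Let \(\xi\) be a minimizer, so \(\int_{B(z,R)}(f_+-\xi)^2 \lesssim \varepsilon^{-\alpha}R^2\int|\nabla f_+|^2\). On \(H\) we have \(f_+ = 0\), hence \(\xi^2\mu(H) \le \int_{B(z,R)}(f_+-\xi)^2\). Then
\begin{equation*}
\int_{B(z,R)} f_+^2 \le 2\int(f_+-\xi)^2 + 2\xi^2 V(z,R) \le 2\Big(1+\frac{V(z,R)}{\mu(H)}\Big)\int(f_+-\xi)^2 \le \frac{4V(z,R)}{\mu(H)}\int(f_+-\xi)^2 .
\end{equation*}
Combining the two displays yields \eqref{eq:thm:poincare_1:1}. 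If \(\mu(H)=0\), the inequality is trivial.
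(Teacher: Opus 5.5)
The paper gives no proof of this statement; it quotes it from Grigor'yan. So your argument has to stand on its own, and it does.

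\textbf{The step from the second inequality to the first.} Your deduction of \eqref{eq:thm:poincare_1:1} from \eqref{eq:thm:poincare_1:2} is correct as written. You apply the second inequality to \(f_+\), use \(f_+=0\) on \(H\) to get \(\xi^2\mu(H)\le\int_{B(z,R)}(f_+-\xi)^2d\mu\), and then use \(\mu(H)\le V(z,R)\).

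\textbf{How your route compares.} Under hypothesis (b) of the introduction (Poincar\'e on the same ball), \eqref{eq:thm:poincare_1:2} is immediate with \(\alpha=0\), because \(\int_{B(z,(1+\varepsilon)R)}|\nabla f|^2d\mu\ge\int_{B(z,R)}|\nabla f|^2d\mu\). The statement only has content if you start, as you do, from the weak form with constant \(N\). The equivalence of the two forms, which the paper quotes from Jerison, is proved by a refined version of your chaining: Whitney-type balls and a careful count of chains, with no loss in \(\varepsilon\). Your covering-and-chaining proof gives up that counting and accepts a polynomial loss, which the factor \(\varepsilon^\alpha\) absorbs. In exchange it is short and gives an explicit exponent.

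\textbf{The flagged obstacle is not one.} You do not need to count chains through a given ball.
\begin{itemize}
\item Remove loops, so each chain uses every ball at most once. Then \(k\lesssim_A R/\rho\).
\item Compare consecutive means through the mean over a common ball such as \(B(x_j,4\rho)\), using the weak Poincar\'e inequality on \(B(x_j,4N\rho)\subset B(z,(1+\varepsilon)R)\).
\item Use the bounded overlap of these enlarged balls.
\end{itemize}
This gives, for every \(i\),
\begin{equation*}
|f_i-f_{i_0}|^2\lesssim_{A,N} k\,\frac{\rho^2}{\min_j V(x_j,\rho)}\int_{B(z,(1+\varepsilon)R)}|\nabla f|^2d\mu .
\end{equation*}
Now multiply by \(\mu(B_i)\) and sum over \(i\), using three facts:
\begin{itemize}
\item \(\sum_i\mu(B_i)\lesssim V(z,2R)\lesssim V(z,R)\);
\item \(V(z,R)/V(x_j,\rho)\le A_1(R/\rho)^{a_1}\);
\item \(R/\rho=10N/\varepsilon\).
\end{itemize}
The total is \(\lesssim_{A,N} R^2(R/\rho)^{a_1-1}\int|\nabla f|^2d\mu\le R^2(10N/\varepsilon)^{a_1}\int|\nabla f|^2d\mu\). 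Together with your local estimate, this proves \eqref{eq:thm:poincare_1:2} with \(\alpha=a_1(A)\).

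\textbf{Two small corrections.}
\begin{itemize}
\item Do not compare means through \(B_j\cap B_{j+1}\). That intersection can have arbitrarily small measure even when the balls meet, so go through the common larger ball instead.
\item For connectivity you only need curves from \(z\) of length \(<R\). These exist by the definition of the distance and automatically stay in \(B(z,R)\). Neither completeness nor geodesics are needed, and geodesics in the classical sense are problematic when \(\partial M\neq\emptyset\).
\end{itemize}
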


\begin{theorem}[Poincare-Type Inquality 2, {\cite[Theorem 1.3]{grigoryan1991heat}}]\label{thm:poincare-type_inequality}
	Let \(x  \in M, R \geq 0\) and \(f \in W^{1,2}(B(x,R))\) and set 
	\begin{equation*}
		H := \{y \in B(x, R/2) : f(y) \leq 0\}.
	\end{equation*}		
	Then 
	\begin{equation}\label{eq:thm:poincare-type_inequality:1}
		\int_{B(x,R)} |\nabla f_+|^2 \eta^2 d\mu \geq \const_{A,a,N} \frac{\mu(H)}{R^2 V(x,R)^2} \left( \int_{B(x,R)}f_+ \eta^2d\mu\right)^2.
	\end{equation} 
	Here, \(\eta(y) = (\dist(y, \partial B(x,R))/R)^{\alpha/2}\) and \(\alpha = \alpha(A) > 0\) is a constant.
\end{theorem}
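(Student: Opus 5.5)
We derive \eqref{eq:thm:poincare-type_inequality:1} from the first estimate \eqref{eq:thm:poincare_1:1} of \zcref{thm:poincare_1}, applied on the family of concentric balls $B_r := B(x,r)$ with $R/2 \le r < R$. On these balls the weight $\eta$ is essentially constant, and the estimates are then glued together by a layer-cake integration in $r$. Throughout we use $\dist(y,\partial B(x,R)) \simeq R - d(x,y)$, so that on $B_r$ we have $\eta \gtrsim ((R-r)/R)^{\alpha/2}$.

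\emph{Step 1: Cauchy--Schwarz on each ball.} For $r \in [R/2, R)$ we have $H \subset B_r$, so $\{f \le 0\} \cap B_r \supset H$. Let $s := R - r$ and apply \eqref{eq:thm:poincare_1:1} on the ball $B_{r'}$ with $r' = r - s/2$ and $\varepsilon = s/(2r') \le 1$. Volume doubling gives $V(x,r') \simeq V(x,R)$, hence
\begin{equation*}
\int_{B_{r'}} f_+^2 \, d\mu \lesssim \frac{R^{2+\alpha} V(x,R)}{s^\alpha \mu(H)} \int_{B_r} |\nabla f_+|^2 \, d\mu .
\end{equation*}
(If $r' < R/2$ one must check $H \subset B_{r'}$; to avoid this, one restricts to $s \le R/4$ and treats the remaining range with the trivial bound $\eta \le 1$.) Applying Cauchy--Schwarz with $\mu(B_{r'}) \le V(x,R)$ then yields
\begin{equation*}
\left(\int_{B_{r'}} f_+ \, d\mu\right)^2 \lesssim \frac{R^{2+\alpha} V(x,R)^2}{s^\alpha \mu(H)} \int_{B_r} |\nabla f_+|^2 \, d\mu .
\end{equation*}

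\emph{Step 2: Layer-cake decomposition.} Write
\begin{equation*}
\eta^2(y) = \int_0^{\infty} \mathbbm{1}_{\{\eta^2 > t\}}(y) \, dt ,
\end{equation*}
or, in radial form, $\eta^2(y) \simeq \int_{d(x,y)}^{R} \frac{\alpha}{R} \bigl(\frac{R-\rho}{R}\bigr)^{\alpha-1} d\rho$. This gives
\begin{equation*}
\int_{B_R} f_+ \eta^2 \, d\mu \simeq \int_0^R \frac{\alpha}{R} \left(\frac{R-\rho}{R}\right)^{\alpha-1} \left(\int_{B_\rho} f_+ \, d\mu\right) d\rho .
\end{equation*}
Insert the square-root form of the Step 1 bound:
\begin{equation*}
\int_{B_\rho} f_+ \, d\mu \lesssim \frac{R^{1+\alpha/2} V(x,R)}{(R-\rho)^{\alpha/2} \mu(H)^{1/2}} \left(\int_{B_{\rho''}} |\nabla f_+|^2 \, d\mu\right)^{1/2},
\end{equation*}
with $R - \rho'' \simeq R - \rho$. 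Then apply Cauchy--Schwarz in $\rho$, splitting the weight $(R-\rho)^{\alpha-1}(R-\rho)^{-\alpha/2}$ into two factors.

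\emph{Step 3: Fubini.} The resulting term $\int_0^R w(\rho) \int_{B_{\rho''}} |\nabla f_+|^2 \, d\mu \, d\rho$ equals, by Fubini, $\int |\nabla f_+|^2(y)\, W(y)\, d\mu(y)$, where $W(y) = \int_{\{\rho \,:\, \rho'' > d(x,y)\}} w(\rho)\, d\rho$. The goal is to show $W(y) \lesssim \eta^2(y)$.

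\emph{Main obstacle.} This exponent bookkeeping is the main difficulty. With the naive choice the weight in $\rho$ is $(R-\rho)^{\alpha/2-1}$, whose tail integral is $\simeq (R-d(x,y))^{\alpha/2}$, which is only $\eta$, not $\eta^2$. This mismatch is why the theorem uses $\eta^2$ in both places and why $\alpha$ is defined from \zcref{thm:poincare_1}.

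\emph{Repair.} I would allow the exponent of $\eta$ to be a free multiple $\beta\alpha$ of the Poincar\'e exponent and optimise the Cauchy--Schwarz split. I expect a working choice in which the factor $\int w_1 \, d\rho$ is finite and the tail of $w_2$ is $\lesssim (R-d(x,y))^{\alpha}$. If the exponents cannot be matched with the $\alpha$ of \zcref{thm:poincare_1}, I would replace it by a larger $\alpha$ in the definition of $\eta$. This is permissible because \eqref{eq:thm:poincare_1:1} remains true with a larger exponent, since $\varepsilon \le 1$. Collecting the constants, which depend only on $A$, $a$ and $N$, then gives \eqref{eq:thm:poincare-type_inequality:1}.
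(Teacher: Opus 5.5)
The paper does not prove this statement (it is quoted from Grigor'yan), so your argument has to stand on its own. As written it stops exactly at the decisive step: the exponent matching is left as ``I expect a working choice'', and the justification you give for the repair points the wrong way. Normalise $R=1$. Let $\alpha_0$ be the exponent in \eqref{eq:thm:poincare_1:1}, put $\sigma(y)=1-d(x,y)$ and $G(\rho)=\int_{B(x,\rho)}|\nabla f_+|^2\,d\mu$, and take the weight $\sigma^\gamma$. Your layer cake together with Step 1 bounds the small-$t$ part of $\int f_+\sigma^\gamma\,d\mu$ by a constant times $V(x,1)\mu(H)^{-1/2}\int_0 t^{\gamma-1-\alpha_0/2}G(1-2t/3)^{1/2}\,dt$. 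Now apply Cauchy--Schwarz with the split $t^{(\gamma-1-\alpha_0)/2}\cdot t^{(\gamma-1)/2}$, followed by Fubini:
\begin{equation*}
\Bigl(\int_0^{1/4}t^{\gamma-1-\alpha_0}\,dt\Bigr)^{1/2}\Bigl(\int_{B(x,1)}|\nabla f_+|^2(y)\int_0^{3\sigma(y)/2}t^{\gamma-1}\,dt\,d\mu(y)\Bigr)^{1/2}\le C_\gamma\Bigl(\int_{B(x,1)}|\nabla f_+|^2\sigma^\gamma\,d\mu\Bigr)^{1/2},
\end{equation*}
provided $\gamma>\alpha_0$. At $\gamma=\alpha_0$ every split fails, the best one by a logarithm; this is the obstruction you ran into. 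So the exponent of $\eta^2$ must be \emph{strictly larger} than the exponent you actually feed in from \eqref{eq:thm:poincare_1:1}, e.g.\ $\gamma=2\alpha_0$. The observation that \eqref{eq:thm:poincare_1:1} ``remains true with a larger exponent'' is beside the point here, because raising the exponent there as well reproduces the same borderline. It only matters if you want one letter $\alpha$ to serve in both theorems, as the paper does. Handle the large-$t$ part by monotonicity of $\rho\mapsto\int_{B(x,\rho)}f_+\,d\mu$ together with a lower bound on the weight on $B(x,7/8)$; the bound $\eta\le 1$ is not what is needed.

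Second, the comparability $\dist(y,\partial B(x,R))\simeq R-d(x,y)$ that you use ``throughout'' is false on general complete manifolds; the triangle inequality only gives $\ge$. For example, take a plane with a compact finger attached next to $x$. This is a compact perturbation of $\R^2$, so volume doubling and the Poincar\'e inequality hold. Choose $R$ slightly larger than the distance from $x$ to the fingertip $y$. Then $R-d(x,y)$ is tiny, while $\dist(y,\partial B(x,R))\approx 2R$. Your layer cake slices along the balls $B(x,\rho)$, which are the superlevel sets of $R-d(x,\cdot)$. So what your argument proves is the inequality for the radial weight $\eta_0=((R-d(x,\cdot))/R)^{\gamma/2}$. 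Passing to the larger $\eta$ is harmless on the gradient side, but not in $\int f_+\eta^2\,d\mu$. Mass of $f_+$ near such a dead end carries weight $\simeq 1$ there, whereas your argument reaches it only through balls $B(x,\rho)$ with $\rho$ close to $R$, at the cost $(R-\rho)^{-\alpha_0/2}$. You have two options. One is to state the theorem with the radial weight; this is all the later weak Harnack argument uses, namely a positive lower bound on $B(z,5R/3)$ and a bound $|\nabla\eta|\lesssim 1/R$, which holds once $\gamma\ge 2$. The other is to add an argument adapted to the non-radial level sets of $\dist(\cdot,\partial B(x,R))$, for instance chaining Whitney balls along a minimizing curve from $y$ to $x$; the point of that curve at length $t$ from $y$ lies at distance at least $t$ from $\partial B(x,R)$.
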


\begin{theorem}[Bottom Eigenvalue Estimate, {\cite[Theorem 1.4]{grigoryan1991heat}}]\label{thm:bottom_eigenvalue_estimate}
	Let \(z \in M\), \(R > 0\) and fix a domain \(\Omega\) with \(\overline{\Omega} \subset B(z, R)\). Let us denote by \(\lambda_1 = \lambda_1(\Omega)\) the bottom eigenvalue of the \emph{Dirichlet problem}
	\begin{align*}
		\begin{cases}
			\Delta u + \lambda u &= 0\\
			u\restrict{\partial \Omega} &= 0\\
			\langle \nabla u, n\rangle \restrict{\partial M \cap \Omega} &= 0,
		\end{cases}
	\end{align*}
	where \(u \in C^\infty(\overline{\Omega})\) is the sought function and \(n\) is the outward-pointing unit normal vector field along \(\partial M\).\footnote{In the case \(\partial M = \emptyset\) this is precisely the Dirichlet problem and in the general case we refer to it as the Dirichlet problem as well.} Then the bottom eigenvalue is bounded below by
	\begin{equation*}
		\lambda_1 \geq \frac{b}{R^2} \left(\frac{V(x,R)}{\mu(\Omega)}\right)^\beta,
	\end{equation*}
	and \(b = b(A,a,N) > 0\) and \(\beta = \beta(A) > 0\) are two constants.
\end{theorem}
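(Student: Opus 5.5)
The plan is to follow the Faber--Krahn route of \cite[Theorem 1.4]{grigoryan1991heat}: combine the first Poincar\'e-type inequality with a lower volume bound for $\Omega$ that comes from \zcref{prop:balls_ratio_estimate}. By the variational characterization,
\begin{equation*}
\lambda_1(\Omega)=\inf\Big\{\int_\Omega|\nabla f|^2d\mu \,\Big/ \int_\Omega f^2d\mu\Big\},
\end{equation*}
where the infimum runs over nonzero $f\in W^{1,2}_0(\Omega)$. The Neumann condition on $\partial M\cap\Omega$ is built into this, because test functions are allowed to be nonzero there. Given such an $f$, extend it by zero to all of $M$. Replacing $f$ by $|f|$ does not change the Rayleigh quotient, so we may assume $f\ge 0$, that is, $f=f_+$. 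The idea is to choose a ball $B(y,r)$ adapted to $\mu(\Omega)$, so that $f$ vanishes on a large portion of it, and then apply \eqref{eq:thm:poincare_1:1}.

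\textbf{Step 1 (choice of scale).} First, if $\mu(\Omega)$ is comparable to $V(z,R)$, say $\mu(\Omega)\ge c_0V(z,R)$, then apply \eqref{eq:thm:poincare_1:1} on $B(z,R)$ with $\varepsilon=1$. Since $f$ vanishes on $B(z,2R)\setminus\Omega$, we have $\mu(H)\ge V(z,2R)-\mu(\Omega)\ge (A_2 2^{a_2}-1)V(z,R)$, and this is $\gtrsim V(z,R)$ once the lower bound of \zcref{prop:balls_ratio_estimate} gives reverse doubling with $A_2 2^{a_2}>1$. (If it does not, iterate to $B(z,2^kR)$ for a fixed $k=k(A)$.) This gives $\lambda_1\gtrsim R^{-2}$, which is the claim in this regime.

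In general, cover $\Omega$ by small balls. For each $y\in\Omega$, define $r(y)\le R$ as the largest radius with $V(y,r)\ge 2\mu(\Omega)$, or $r=R$ if no such radius exists. On $B(y,r)$ the function $f$ then vanishes on at least half of the volume, so $\mu(H)\ge \tfrac12 V(y,r)$. Applying \eqref{eq:thm:poincare_1:1} with $\varepsilon=1$ gives
\begin{equation*}
\int_{B(y,2r)}|\nabla f|^2\gtrsim r^{-2}\int_{B(y,r)}f^2.
\end{equation*}
The upper bound in \zcref{prop:balls_ratio_estimate}, applied to $B(y,r)\subset B(z,2R)$, gives
\begin{equation*}
\frac{V(z,R)}{\mu(\Omega)}\simeq\frac{V(z,R)}{V(y,r)}\le A_1(2R/r)^{a_1},
\end{equation*}
hence $r^{-2}\gtrsim R^{-2}\big(V(z,R)/\mu(\Omega)\big)^{2/a_1}$. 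This determines $\beta=2/a_1$, which depends only on $A$.

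\textbf{Step 2 (summation).} Choose a maximal $r$-separated family of points in $\Omega$, with $r$ chosen uniformly: take $r$ minimal such that $\inf_{y\in\Omega}V(y,r)\ge 2\mu(\Omega)$. By volume doubling, the doubled balls $B(y_i,2r)$ overlap with multiplicity bounded by a constant depending only on $A$. Summing the local inequalities over the family yields
\begin{equation*}
\int_M|\nabla f|^2\gtrsim r^{-2}\int_\Omega f^2,
\end{equation*}
and Step 1 converts $r^{-2}$ into the stated bound.

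The main obstacle is making the radius uniform. With a single $r$, the condition $V(y,r)\ge2\mu(\Omega)$ can fail for points $y$ whose small balls have small volume, and for a varying radius $r(y)$ the overlap count is no longer controlled. My first attempt would be to use the comparison of \zcref{prop:balls_ratio_estimate} between neighbouring balls: it shows $V(y,r)\simeq V(y',r)$ for $d(y,y')\le r$, so that $r(y)$ varies slowly. A Vitali-type selection of disjoint balls $B(y_i,r(y_i)/5)$ should then give bounded overlap with constants depending on $A$. If this becomes unwieldy, the fallback is to work with the single ball $B(z,R)$ and use \eqref{eq:thm:poincare_1:2} together with a truncation/level-set (Cheeger-type) argument, again gaining the factor $(V/\mu(\Omega))^\beta$ from the volume ratio estimate.
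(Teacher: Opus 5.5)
The paper gives no proof of this statement; it imports it from \cite[Theorem 1.4]{grigoryan1991heat}, so there is no in-paper route to compare against. Your covering argument is a valid self-contained proof. In fact your Step 2 already completes it: the ``main obstacle'' you raise at the end does not exist.

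Set $r:=\inf\{\rho>0 : V(y,\rho)\ge 2\mu(\Omega)\text{ for all } y\in\Omega\}$.
\begin{itemize}
\item By definition, every $\rho\in(r,2r]$ satisfies $V(y,\rho)\ge 2\mu(\Omega)$ at \emph{every} centre $y\in\Omega$. Hence $\mu(B(y,\rho)\setminus\Omega)\ge\frac12 V(y,\rho)$, and \eqref{eq:thm:poincare_1:1} with $\varepsilon=1$ applies on every ball of a maximal $\rho$-separated family.
\item The multiplicity of the balls $B(y_i,2\rho)$ is at most $A^4$, by doubling alone.
\item A ball of volume \emph{comparable} to $\mu(\Omega)$ is needed only to convert $\rho^{-2}$ into the stated bound, and a single centre suffices for that. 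Minimality of $r$ gives $y^*\in\Omega$ with $V(y^*,r/2)<2\mu(\Omega)$. If $r\le 4R$, the upper bound of the ball-ratio estimate yields $V(z,R)\le V(y^*,2R)\le A_1(4R/r)^{a_1}V(y^*,r/2)<2A_1(4R/r)^{a_1}\mu(\Omega)$, i.e.\ $r^{-2}\gtrsim R^{-2}(V(z,R)/\mu(\Omega))^{2/a_1}$.
\item Centres whose small balls have large volume are harmless; they satisfy the half-measure condition with room to spare.
\end{itemize}
So you need neither a Vitali selection with variable radii nor a Cheeger-type fallback, and $\beta=2/a_1$ as you say. If $r>4R$, the lower bound of the same estimate at $y^*$ gives $A_2(r/4R)^{a_2}<2$ and also $V(z,R)\le V(y^*,r/2)<2\mu(\Omega)$. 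So the large-$\Omega$ case is absorbed into the same argument.

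Two slips need correcting.
\begin{itemize}
\item In Step 1, $r(y)$ must be the \emph{smallest} radius with $V(y,r)\ge 2\mu(\Omega)$. The admissible radii form an upward ray, so ``largest, capped at $R$'' always returns $R$, and the volume comparison then gives nothing.
\item In your separate treatment of $\mu(\Omega)\ge c_0V(z,R)$, the set $H$ in \eqref{eq:thm:poincare_1:1} lies in the \emph{inner} ball. Applying it on $B(z,R)$ therefore only gives $\mu(H)\ge V(z,R)-\mu(\Omega)$, which may vanish when $\Omega$ nearly fills $B(z,R)$. You must instead apply it on $B(z,2^kR)$, with $k=k(A)$ chosen via reverse doubling so that $V(z,2^kR)\ge 2V(z,R)$; this costs only a factor $4^{-k}$. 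With the uniform radius above, this case distinction is unnecessary anyway.
\end{itemize}
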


\section{\texorpdfstring{\(L^2\)}{L2}-Mean Value Inequality}\label{sec:l2-mean_value_inequality}

In this section, we establish a mean value inequality for the operator \(L\) introduced in \zcref{sec:introduction}, under the assumption of an isoperimetric inequality. This is an important step toward proving the parabolic Harnack inequality.

\subsection{Isoperimetric Inequality}\label{subsec:isoperimetric_inequality}

In this section we introduce the notion of an isoperimetric inequality.

\begin{definition}\label{def:isoperimetric_inequality}
	Let \(\Lambda : (0, \infty) \to \R\) be a continuous, positive and monotone decreasing function. We say that an \emph{isoperimetric inequality} is satisfied in a domain \(\Omega \subset M\) if for any open set \(U \subset M\) with \(\overline{U} \subset \Omega\) we have
	\begin{equation*}
		\lambda_1(U) \geq \Lambda(\mu(U))
	\end{equation*}
	for the bottom eigenvalue \(\lambda_1 = \lambda_1(U)\) of the Dirichlet problem in \(U\) introduced in \zcref{thm:bottom_eigenvalue_estimate}.
\end{definition}

\begin{example}\label{ex:isoperimetric_inequality_cartan_hardamard}
	If \(M\) is a Cartan-Hadamard manifold, that is, a simply connected Riemannian manifold with non-positive sectional curvature, there is an isoperimetric inequality with the function
	\begin{equation*}
		\Lambda(v) = av^{-2/n},
	\end{equation*}
	where \(a = a(n) > 0\). Some typical examples are  \(\R^n\) and \(\mathbb{H}^n\). Moreover, it holds for minimal submanifolds of \(\R^n\). This example is taken from \cite{grigoryan2009heat}; for more details see e.g. \cite{chunggrigoryanyau1999}.
\end{example}

\begin{example}\label{ex:isoperimetric_inequality_volume_doubling_poincare}
	Let us assume that \(M\) admits the volume doubling property and the weak Poincar\'e inequality. \zcref[S]{thm:bottom_eigenvalue_estimate} implies that in every ball \(B(z, R)\), where \(z \in M\) and \(R > 0\), there is an isoperimetric inequality with the function	
	\begin{equation}
		\Lambda(v) = \frac{b}{R^2} \left(\frac{V(x,R)}{v}\right)^\beta,
	\end{equation}
	where \(b = b(A,a,N) > 0\) and \(\beta = \beta(A) > 0\) are two positive constants.
\end{example}

We note that the function
\begin{equation*}
	\begin{split}
		\tau : (0, \infty) &\to (0, \infty)\\
		v &\mapsto v/\Lambda(v)
	\end{split}
\end{equation*}
is strictly monotonically increasing and continuous. Furthermore, we have \(\tau(v) \to \infty\) as \(v \to \infty\). Therefore, it admits an inverse function
\begin{equation*}\label{eq:sec:isoperimetric_inequality:2}
	\omega : (0, \infty) \to (0,\infty),
\end{equation*}
which is continuous and strictly monotonically increasing as well. For any \(t, r > 0\) and a given absolute constant \(c > 0\) let us define functions \(V(t), W(r)\) by the equalities
\begin{equation}\label{eq:sec:isoperimetric_inequality:3}
	ct = \int_0^{V(t)} \frac{d\xi}{\omega(\xi)} \quad \text{and} \quad cr = \int_0^{W(r)} \frac{d\xi}{\sqrt{\xi\omega(\xi)}}.
\end{equation}

\begin{remark}
	Note that we have not specified any domain or codomain for the functions \(V\) and \(W\). These are given implicitly. If there exists a value \(a \geq 0\) such that
	\begin{equation*}
		ct = \int_0^a \frac{d\xi}{\omega(\xi)},
	\end{equation*}
	then we set \(V(t) = a\). Otherwise, \(V(t)\) remains undefined. However, if \(V(t)\) is defined, then \(V(t')\) is also defined for all \(0 < t' < t\) due to the continuity of the integral. The function \(W\) is treated analogously.
\end{remark}

In what follows, we assume that the integrals in \zcref{eq:sec:isoperimetric_inequality:3} converge in a neighbourhood of zero. Hence, for sufficiently small \(R\) and \(T\), the functions \(V\) and \(W\) defined in \zcref{eq:sec:isoperimetric_inequality:3} are well-defined in \((0,T)\) and \((0,R)\), respectively.

\begin{example}\label{ex:V_W_polynomial_isoperimetric_inequality}
	 In the situation of \zcref{ex:isoperimetric_inequality_cartan_hardamard,ex:isoperimetric_inequality_volume_doubling_poincare} this takes place. Indeed, let us consider the more general case
	 \begin{equation*}
	 	\Lambda(v) \gtrsim v^{-\nu}
	 \end{equation*}
	 in a neighbourhood of zero, where \(\nu > 0\) is a given constant. We get \(\tau(v) \lesssim v^{1+\nu}\), so
	 \begin{equation*}
	 	\omega(\xi) \gtrsim \xi^{1/(1+\nu)}
	 \end{equation*}
	 holds in a neighbourhood of zero. We obtain for \(a > 0\) small enough
	 \begin{equation*}
	 	\int_{0+} \frac{d\xi}{\omega(\xi)} \lesssim \int_{0+} \xi^{-1 /(1+\nu)} d\xi < \infty,
	 \end{equation*}
	 and in the same way
	 \begin{equation*}
	 	\int_{0+} \frac{d\xi}{\sqrt{\xi \omega(\xi)}} \lesssim \int_{0+} \xi^{-(2+\nu)/(2+2\nu)} d\xi < \infty.
	 \end{equation*}
     This example is essentially contained in \cite{grigoryan1991heat}.
\end{example}

\subsection{\texorpdfstring{\(L^2\)}{L2}-Mean Value Inequality}\label{subsec:l2-mean_value_inequality}

Let us consider the operator  \(L : C^\infty(M \times (0, \infty) ) \to  C^\infty(M \times (0, \infty))\) defined by
\begin{equation*}
		Lu := p \partial_t u - \divg(U \nabla u)
\end{equation*}
for a fixed smooth, uniformly positive and bounded function \(p \in C^\infty(M)\) and a smooth family of uniformly elliptic operators \(U(x,t): T_xM \to T_x M\).  A smooth family of operators \(U(x,t) : T_x M \to T_x M\) is called \emph{uniformly elliptic} if there exists \(\lambda > 0\) such that for any \(x \in M\) and \(\xi \in T_x M\) the following relations holds:
\begin{equation*}
	\lambda^{-1} |\xi|^2 \leq \langle U(x,t) \xi, \xi\rangle \leq \lambda |\xi|^2.
\end{equation*}
We agree on the convention that when we say a constant depends on \(U\) or \(p\), we actually mean that is depends on the uniform ellipticity of \(U\) and on the upper and lower bounds of \(p\), respectively.

\begin{remark}
	Given an open set \(U \subset M \times (0,\infty)\), the restriction \(L\restrict{U}\) is again a self-map, that is,
	\begin{equation*}
		L\restrict{U} : C^\infty(U) \to C^\infty(U).
	\end{equation*}
	Thus, \(L\) restricts naturally to functions defined on smaller domains.
\end{remark}
Due to uniform ellipticity we have the following positive definite and symmetric bilinear form
\begin{align*}
		\langle -, -\rangle_U : T_x M \times T_x M &\to \R\\
		(\zeta, \xi) &\mapsto \langle U \zeta, \xi\rangle.
\end{align*}
We denote by
\begin{align*}
	\|-\|_U : T_x M &\to \R\\
	\xi &\mapsto \sqrt{\langle\xi, \xi\rangle_U}
\end{align*}
the norm induced by \(U\).

Let us fix a point \(z \in M\) and two real numbers \(R\), \(T > 0\). We write
\begin{equation*}
	\amalg = B(z,R) \times (0,T)
\end{equation*}
for the cylinder centred at \(z\) with radius \(R\) and height \(T\).

\begin{definition}\label{def:parabolic_subparabolic_superparabolic}
	Let \(u \in C^\infty(\overline{\amalg})\). We say that \(u\) is parabolic in \(\amalg\) with respect to \(L\) if it solves the Dirichlet problem
	\begin{align}\label{eq:def:parabolic_subparabolic_superparabolic:1}
		\begin{cases}
			Lu = 0\\
			\langle \nabla u, n\rangle_U  \restrict{\partial M \cap B(z,R)} &= 0,
		\end{cases}
	\end{align}
	where \(n\) denotes the outward pointing unit normal vector field along \(\partial M\). If the condition \(Lu = 0\) in \zcref{eq:def:parabolic_subparabolic_superparabolic:1} is replaced by \(Lu \leq 0\), we call \(u\) \emph{subparabolic}. Similarly, if \(Lu = 0\) is replaced by \(Lu \geq 0\) in \zcref{eq:def:parabolic_subparabolic_superparabolic:1}, we call \(u\) \emph{superparabolic}.
\end{definition}

\begin{theorem}[Energy Estimate]\label{thm:energy_estimate}
	Let \(\eta: M \times [0, \infty) \to \R\) be a Lipschitz function with \(\eta(-,0) = 0\) supported in \(\overline{B(z,R)}\) for all \(t \geq 0\). Fix a subparabolic function \(u\) with respect to \(L\) in \(\amalg\). Then the estimate
	\begin{equation}\label{eq:thm:energy_estimate:1}
			\int_{B(z,R)} p v^2(\cdot,T) \eta^2(\cdot,T) d\mu + \frac{1}{2} \int_\amalg \|\nabla (v\eta)\|_U^2d\overline{\mu} \leq 5\int_\amalg v^2 \left(\|\nabla \eta\|_U^2 + p |\eta \partial_t\eta|\right) d\overline{\mu}
	\end{equation}
	holds for every function
	\begin{equation*}
		v := (u-\theta)_+,
	\end{equation*}
	where \(\theta \geq 0\) is an arbitrary real number.
\end{theorem}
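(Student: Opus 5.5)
We would prove the energy estimate in the standard Caccioppoli way: test the inequality \(Lu \le 0\) against \(v\eta^2\), where \(v=(u-\theta)_+\). This test function is admissible. It is Lipschitz in space and time, non-negative, and vanishes near \(\partial B(z,R)\times(0,T)\) because \(\eta\) is supported in \(\overline{B(z,R)}\); we may either assume \(\eta\) vanishes on \(\partial B(z,R)\) or approximate. It also vanishes at \(t=0\) since \(\eta(\cdot,0)=0\). Multiplying \(p\,\partial_t u \le \divg(U\nabla u)\) by \(v\eta^2\ge 0\) and integrating over \(\amalg\), we apply \zcref{cor:integration_by_parts} in space. The boundary term on \(\partial M\cap B(z,R)\) vanishes by the Neumann condition \(\langle\nabla u,n\rangle_U=0\), and the term on \(\partial B(z,R)\) vanishes by the support of \(\eta\). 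Using \(\nabla u=\nabla v\) and \(\partial_t u=\partial_t v\) on \(\{v>0\}\), and \(v=0\) elsewhere, this gives
\begin{equation*}
\int_\amalg p\,\partial_t v\, v\eta^2\, d\overline{\mu} + \int_\amalg \langle \nabla v, \nabla(v\eta^2)\rangle_U\, d\overline{\mu} \le 0 .
\end{equation*}

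\textbf{Time term.} Since \(p\) does not depend on \(t\), we have \(p\,\partial_t v\, v\eta^2 = \tfrac12\partial_t(pv^2\eta^2) - pv^2\eta\,\partial_t\eta\). Integrating in \(t\) from \(0\) to \(T\) and using \(\eta(\cdot,0)=0\), the first piece gives \(\tfrac12\int_{B(z,R)} p v^2\eta^2(\cdot,T)\,d\mu\). The second piece is bounded by \(\int_\amalg p v^2|\eta\partial_t\eta|\,d\overline{\mu}\).

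\textbf{Space term.} We use the pointwise identity (for symmetric \(U\))
\begin{equation*}
\langle \nabla v, \nabla (v\eta^2)\rangle_U = \|\nabla(v\eta)\|_U^2 - v^2\|\nabla\eta\|_U^2 .
\end{equation*}
To check it, expand \(\nabla(v\eta^2)=\eta^2\nabla v+2v\eta\nabla\eta\) and \(\|\nabla(v\eta)\|_U^2=\eta^2\|\nabla v\|_U^2+2v\eta\langle\nabla v,\nabla\eta\rangle_U+v^2\|\nabla\eta\|_U^2\). If \(U\) is not assumed symmetric, we instead use Cauchy--Schwarz and Young with the ellipticity bounds, which costs constants. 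The paper's notation \(\langle\cdot,\cdot\rangle_U\) is called symmetric, so we take \(U\) symmetric.

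\textbf{Combining.} Putting the two terms together yields
\begin{equation*}
\tfrac12\int_{B(z,R)} p v^2\eta^2(\cdot,T)\,d\mu + \int_\amalg\|\nabla(v\eta)\|_U^2\,d\overline{\mu} \le \int_\amalg v^2\bigl(\|\nabla\eta\|_U^2 + p|\eta\partial_t\eta|\bigr)\,d\overline{\mu}.
\end{equation*}
Multiplying by \(2\) gives
\begin{equation*}
\int_{B(z,R)} p v^2\eta^2(\cdot,T)\,d\mu + 2\int_\amalg\|\nabla(v\eta)\|_U^2\,d\overline{\mu} \le 2\int_\amalg v^2\bigl(\|\nabla\eta\|_U^2 + p|\eta\partial_t\eta|\bigr)\,d\overline{\mu}.
\end{equation*}
This is stronger than \zcref{eq:thm:energy_estimate:1}, whose left-hand side has the smaller coefficient \(\tfrac12\) and whose right-hand side has the larger constant \(5\). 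The generous constants leave room for Young-inequality losses if one prefers not to use the exact identity.

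\textbf{Main obstacle.} The expected difficulty is justifying the computation, since \(v\) and \(\eta\) are only Lipschitz. We would handle this in two steps.
\begin{itemize}
\item Replace \((\cdot)_+\) by a smooth convex approximation \(\phi_k(s)\uparrow(s-\theta)_+\) with \(\phi_k'\ge0\), carry out the computation with \(\phi_k(u)\), and pass to the limit by dominated convergence.
\item Mollify \(\eta\), or note that Lipschitz functions are in \(W^{1,2}\) in both variables so the time integration by parts holds, and approximate \(\eta\) by functions that vanish near \(\partial B(z,R)\).
\end{itemize}
Throughout, we also use that \(u\in C^\infty(\overline{\amalg})\) is bounded with bounded derivatives on the cylinder.
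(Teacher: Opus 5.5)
Your proposal is correct and follows the paper's route: test $Lu\le 0$ against $v\eta^2$, remove the boundary term on $\partial M$ with the Neumann condition, and integrate by parts in time using $\eta(\cdot,0)=0$. The differences are only technical. You justify the Lipschitz test function by smoothing $(\cdot)_+$ and approximating $\eta$, where the paper uses Sard's theorem and integrates over $\{u\ge\theta\}$; and you use the exact identity $\langle\nabla v,\nabla(v\eta^2)\rangle_U=\|\nabla(v\eta)\|_U^2-v^2\|\nabla\eta\|_U^2$ (valid because $\langle\cdot,\cdot\rangle_U$ is symmetric, as the paper also assumes) in place of the paper's two Young-type inequalities, which gives the sharper constants $2$ and $2$ instead of $\tfrac12$ and $5$.
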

\begin{proof}
	The proof is split into three steps.\par
	\emph{Step 1:} We show that for any function \(\varphi \in C_0^\infty(B(z,R))\) and \(t \in (0,T)\)
	\begin{equation}\label{eq:thm:energy_estimate:2}
		\int_{B(z,R)} pv \partial_t v \varphi^2 d\mu \leq -\int_{B(z,R)} \langle \nabla(v\varphi^2), \nabla v\rangle_U d\mu
	\end{equation}
	holds. Let us fix a function \(\varphi \in C_0^\infty(B(z,R))\) and assume that \(\theta\) is a regular value of \(u\) and \(u\restrict{\partial M}\). This ensures that \(S =\{u \geq \theta\}\) is a manifold with piecewise smooth boundary \(\partial S = \{u = \theta\} \cup \left(\partial M \cap B(z,R)\right)\) and justifies the application of \zcref{cor:integration_by_parts} in the following calculation:
	\begin{equation*}
		\begin{split}
			\int_{B(z,R)} pv\partial_t v \varphi^2 d\mu &= \int_S pv \partial_t u \varphi^2 d\mu \leq \int_S \varphi^2 v \divg (U \nabla u) d\mu\\
			&= \int_{\partial S} \langle\nabla u, n\rangle_U v\varphi^2 d\widetilde{\mu} - \int_{B(z,R)} \langle \nabla u, \nabla(v\varphi^2)\rangle_U d\mu,
		\end{split}
	\end{equation*}
	where \(n\) is the outward pointing unit normal vector field along \(\partial S\) and \(\widetilde{\mu}\) the induced measure on \(\partial S\). We need to show that the integral over \(\partial S\) is zero.	Indeed, we have \(v = 0\) on the the level set \(\{u = \theta\}\) and \(\langle \nabla u, n \rangle_U = 0\) on \(\partial M \cap B(z,R)\). Let us consider the case when \(\theta\) is a critical value of \(u\). We find a decreasing sequence of regular values \(\theta_k \in (0, \infty)\) with \(\lim_{k\to\infty} \theta_k = \theta\) since regular values are dense, which follows from Sard's theorem; see e.g. \cite{lee2012introduction}. We know by results from \cite{grigoryan2009heat}
	\begin{equation*}
		v_k := (u-\theta_k)_+ \xrightarrow{W^{1,2}} v \ \text{as} \ k \to \infty.
	\end{equation*}
	The Dominated Convergence Theorem yields
	\begin{equation*}
		\lim_{k \to \infty} \int_{B(z, R)} pv_k \partial_t v_k \varphi^2 d\mu = \int_{B(z,R)} pv\partial_t v \varphi^2 d\mu,
	\end{equation*}
	because
	\begin{equation*}
		|p v_k \partial_t v_k \varphi^2| \leq  |p u \partial_t u \varphi^2|
		 \in L^1(B(z,R)).
	\end{equation*}
	For the integral on the right-hand side we proceed similarly:
	\begin{equation*}
			\langle \nabla u, \nabla(v_k\varphi^2)\rangle_U \leq \sup_{x \in B(z,R)} \varphi^2(x) \|\nabla u\|_U \|\nabla v_k\|_U \leq \const \|\nabla u\|_U^2 \in L^1(B(z,R)),
	\end{equation*}
	thereby
	\begin{equation*}
		\lim_{k \to \infty} \int_{B(z,R)} \langle \nabla u, \nabla(v_k\varphi^2)\rangle_U d\mu = \int_{B(z,R)} \langle \nabla u, \nabla(v\varphi^2)\rangle_U d\mu
	\end{equation*}
	follows by the Dominated Convergence Theorem.\par
	\emph{Step 2:} In this step we generalize \zcref{eq:thm:energy_estimate:2} to Lipschitz functions supported in \(\overline{B(z,R)}\). Let \(\varphi\) be such a function. We fix a real number \(r < R\) and a smooth cutoff function \(\eta\) of \(B(z,r)\) in \(B(z,R)\) such that \(\eta \varphi \in W_c^{1,2}(B(z,R)) \subset W_0^{1,2}(B(z,R))\) and rename \(\eta \varphi\) back into \(\varphi\). Consequently, we find a sequence of smooth functions \(\varphi_k \in C_0^\infty(B(z,R))\) with
	\begin{equation*}
		\varphi_k \xrightarrow{W^{1,2}} \varphi \ \text{as} \ k \to \infty.
	\end{equation*}
	Note
	\begin{equation*}
		\begin{split}
			\|\varphi^2 - \varphi_k^2\|_{L^1(B(z,r))} &= \|(\varphi + \varphi_k)(\varphi - \varphi_k)\|_{L^1(B(z,r))}\\
			&\leq \|\varphi + \varphi_k\|_{L^2(B(z,r))} \|\varphi - \varphi_k\|_{L^2(B(z,r))}\\
			&\xrightarrow{k \to \infty}0
		\end{split}		
	\end{equation*}
	and
	\begin{equation*}
		\begin{split}
			\| \nabla (v \varphi^2) &- \nabla (v \varphi_k^2) \|_{L^1(B(z,r))} = \| \nabla v (\varphi^2 - \varphi_k^2) + 2v(\nabla \varphi - \nabla \varphi_k) \|_{L^1(B(z,r))}\\
			&\leq \const \left(\|\varphi^2 - \varphi_k^2\|_{L^1(B(z,r))} + \|\nabla \varphi - \nabla \varphi_k\|_{L^2(B(z,r))}\right)\\
			&\xrightarrow{k \to \infty} 0.
		\end{split}
	\end{equation*}
	We have used here that \(L^2\)- convergence implies \(L^1\)- convergence on spaces with finite measure. With these identities in hand, it follows that
	\begin{equation*}
		\left|\int_{B(z,r)} pv \partial_t v \varphi^2 d\mu - \int_{B(z,r)} pv \partial_t v \varphi_k^2 d\mu\right| \leq \const \int_{B(z,r)} |\varphi^2 - \varphi_k^2| d\mu \xrightarrow{k \to \infty} 0
	\end{equation*}		
	and
	\begin{equation*}
		\begin{split}
			\left|\int_{B(z,r)} \langle \nabla u, \nabla(v \varphi^2)\rangle_U d\mu - \int_{B(z,r)} \langle \nabla u, \nabla(v \varphi_k^2)\rangle_U d\mu\right| &\leq \const\int_{B(z,r)} |\nabla (v \varphi^2) - \nabla (v \varphi_k^2)| d\mu\\
			&\xrightarrow{k\to\infty} 0.
		\end{split}
	\end{equation*}
	 The previous work leads to the estimate
	\begin{equation*}
		\int_{B(z,r)} pv\partial_t v \varphi^2 d\mu \leq -\int_{B(z,r)} \langle \nabla u, \nabla(v \varphi^2)\rangle_U d\mu,
	\end{equation*}
	and the claim follows by letting \(r \to R\) due to the Dominated Convergence Theorem.\par
	\emph{Step 3:} We show \zcref{eq:thm:energy_estimate:1}. By what we have shown in Step 2, we can plug \(\varphi_t(x) = \eta(x,t)\) for all  \(t \geq 0\) into \zcref{eq:thm:energy_estimate:2}. Integrating this inequality in \(t\) yields
	\begin{equation*}
			\int_\amalg p v \partial_t v \eta^2 d\overline{\mu} \leq - \int_\amalg \langle \nabla(v \eta^2), \nabla v\rangle_U d\overline{\mu} = -\int_\amalg \|\nabla v\|_U^2 \eta^2 d\overline{\mu} -2\int_\amalg \langle \nabla v, \nabla \eta\rangle_U v \eta d\overline{\mu}.
	\end{equation*}
	Furthermore, performing a partial integration gives us
	\begin{equation}\label{eq:thm:energy_estimate:3}
		\begin{split}
			\int_\amalg p v \partial_t v \eta^2 d\overline{\mu} &= \frac{1}{2}\int_\amalg p \partial_t(v^2) \eta^2d\overline{\mu} = \frac{1}{2} \int_{B(z,R)}p \left( \int_0^T \partial_t (v^2) \eta^2 dt \right)d\mu\\
			&= \frac{1}{2}\int_{B(z,R)} p [v^2\eta^2]_{t=0}^{t=T} d\mu - \int_{B(z,R)} \int_0^T p v^2 \eta \partial_t \eta dt d\mu\\
			&= \frac{1}{2}\int_{B(z,R)} p v^2(\cdot, T) \eta^2(\cdot,T) d\mu - \int_\amalg p v^2 \eta \partial_t \eta  d\overline{\mu}.
		\end{split}
	\end{equation}
	To estimate the right-hand side, we notice that \(0 \leq \|\nabla v \eta + 2 \nabla \eta v\|_U^2\) implies
	\begin{equation*}
		-2 (\nabla v, \nabla \eta)_U v \eta \leq \frac{1}{2} \|\nabla v\|_U^2 \eta ^2 + 2 v^2 \|\nabla \eta\|_U^2,
	\end{equation*}
	and \(0 \leq \| \nabla v \eta + \nabla \eta v\|_U^2\) similarly leads to\footnote{We can also apply Young's inequality as we will see in the proof of \zcref{thm:weak_harnack_inequality}.}
	\begin{equation*}
		-\|\nabla v\|_U^2 \eta^2 \leq \|\nabla \eta\|_U^2 v^2 - \frac{1}{2} \|\nabla (v \eta)\|_U^2.
	\end{equation*}
	Hence, we obtain
	\begin{equation}\label{eq:thm:energy_estimate:4}
		\int_\amalg p v \partial_t v \eta^2 d\overline{\mu} \leq - \frac{1}{4} \int_\amalg \|\nabla (v \eta)\|_U^2 d\overline{\mu} + \frac{5}{2}\int_\amalg \|\nabla \eta\|_U^2 v^2 d\overline{\mu}.
	\end{equation}
	The desired result \zcref{eq:thm:energy_estimate:1} follows from \zcref{eq:thm:energy_estimate:3} and \zcref{eq:thm:energy_estimate:4}.
\end{proof}

\begin{lemma}\label{lem:comparision_integrated_cutoffs}
	Let \(u\) be a subparabolic function in the cylinder \(\amalg\) with respect to \(L\). Moreover, let us assume that in \(B(z,R)\) there is an isoperimetric inequality with function \(\Lambda(v)\). We consider the subcylinder \(\widetilde{\amalg}\) defined by
	\begin{equation*}
		\widetilde{\amalg} := B(z,R') \times (T', T),
	\end{equation*}		
	where \(0 < T' < T\) and \(0 < R' < R\) are two real numbers. Given \(\theta > 0\) we set
	\begin{equation*}
		H = \int_\amalg u_+^2 d\mu \quad \text{and} \quad \widetilde{H} = \int_{\widetilde{\amalg}} (u-\theta)_+^2 d\mu,
	\end{equation*}
	and \(H\) and \(\widetilde{H}\) fullfill the inequality
	\begin{equation}\label{eq:lem:comparision_integrated_cutoffs:1}
		\widetilde{H} \leq \frac{C H}{\delta \Lambda(C\delta^{-1}\theta^{-2}H)}.
	\end{equation}
	Here, \(\delta = \min \{T', (R-R')^2\}\) and \(C\) denotes an absolute constant only depending on \(p\) and \(U\). In the case of the heat equation, that is, \(L u = \partial_t u - \Delta u\), we can choose \(C = 50\).
\end{lemma}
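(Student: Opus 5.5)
We would follow Grigor'yan's argument: combine the Energy Estimate (\zcref{thm:energy_estimate}) with a cutoff function, and feed the result into the isoperimetric inequality applied to the set where \(u > \theta\).

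\emph{Cutoff.} Choose the Lipschitz function
\[
\eta(x,t) = \min\Bigl\{1, \tfrac{(R-d(x,z))_+}{R-R'}\Bigr\}\cdot \min\Bigl\{1, \tfrac{t}{T'}\Bigr\}.
\]
It vanishes at \(t=0\), is supported in \(\overline{B(z,R)}\), equals \(1\) on \(\widetilde{\amalg}\), and satisfies \(|\nabla\eta|^2 + |\eta\partial_t\eta| \le 2/\delta\). With \(v=(u-\theta)_+ \le u_+\), \zcref{thm:energy_estimate} applied on \(B(z,R)\times(0,s)\) for every \(s\in(T',T]\) gives
\[
\sup_{s\in(T',T)} \int_{B(z,R)} p\, v^2\eta^2(\cdot,s)\,d\mu + \tfrac12\int_\amalg \|\nabla(v\eta)\|_U^2\,d\overline{\mu} \le \frac{C_1 H}{\delta}.
\]
Here \(C_1\) depends only on \(\lambda\) and \(\sup p\).

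\emph{Isoperimetric step, for each fixed \(t\in(T',T)\).} The function \(v\eta(\cdot,t)\) vanishes outside the open set
\[
U_t = \{u(\cdot,t) > \theta\}\cap B(z,R).
\]
By Chebyshev, \(\mu(U_t) \le \theta^{-2}\int_{B(z,R)} u_+^2(\cdot,t)\,d\mu\). We want a bound uniform in \(t\), but only the space-time integral \(H\) is controlled. We therefore apply the energy estimate a second time, now to \(u_+\) (that is, \(\theta=0\)) with the same cutoff, to get
\[
\int_{B(z,R')} u_+^2(\cdot,t)\,d\mu \lesssim \frac{H}{\delta}\quad\text{for all } t \in (T',T).
\]
To make this match, we replace \(U_t\) by \(\{v\eta>0\}\), which lies in the region where the cutoff is positive. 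We plan to use nested cutoffs: an intermediate radius \((R+R')/2\) and intermediate time \(T'/2\). This costs only absolute factors in \(\delta\). The upshot is \(\mu(U_t)\le C\delta^{-1}\theta^{-2}H\).

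Approximating \(U_t\) from inside by sets with compact closure in \(B(z,R)\), the variational characterization of \(\lambda_1\) together with monotonicity of \(\Lambda\) gives
\[
\int \|\nabla (v\eta)\|_U^2\,d\mu \ge \lambda^{-1}\,\Lambda\bigl(C\delta^{-1}\theta^{-2}H\bigr)\int v^2\eta^2\,d\mu .
\]

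\emph{Conclusion.} Integrate the last inequality over \(t\in(T',T)\) and combine it with the gradient bound from the first step:
\[
\Lambda\bigl(C\delta^{-1}\theta^{-2}H\bigr)\,\widetilde H \le \lambda \int_\amalg\|\nabla(v\eta)\|_U^2\,d\overline{\mu} \le \frac{C H}{\delta}.
\]
This is \zcref{eq:lem:comparision_integrated_cutoffs:1}. For the heat equation (\(p=1\), \(U=\mathrm{id}\)), tracking the constants \(5\), \(2\) and the factor \(\tfrac12\) should give \(C=50\).

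The main obstacle is the uniform-in-\(t\) volume bound for \(\{u>\theta\}\): Chebyshev needs a time slice, while \(H\) is a space-time integral. This is exactly where the supremum term of the energy estimate and the slightly larger intermediate cylinder are needed.
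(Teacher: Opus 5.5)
Your proposal is correct and, once you implement the nested cutoffs you settle on, it is essentially the paper's proof. The energy estimate applied to $u_+$ with a cutoff equal to $1$ on $B(z,(R+R')/2)$ gives the slice bound for every $\tau\in[T',T]$. Chebyshev then bounds $\mu(\{u(\cdot,t)>\theta\}\cap B(z,(R+R')/2))$ by $C\delta^{-1}\theta^{-2}H$. The second application, to $(u-\theta)_+$ with a cutoff running from $R'$ to $(R+R')/2$, is combined with $\lambda_1(D_t)\ge\Lambda(\mu(D_t))$ and integrated over $(T',T)$. The intermediate time $T'/2$ is unnecessary: the paper keeps $\min\{1,t/T'\}$ in both applications, since only slices with $t\in[T',T]$ are used and the time cutoff equals $1$ there. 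With that choice the heat-equation constants come out as $5\cdot 5=25$ and $2\cdot 5\cdot 5=50$, giving $C=50$.
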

\begin{proof}
	\zcref{eq:lem:comparision_integrated_cutoffs:1} follows from \zcref{thm:energy_estimate} by a careful choice of \(\eta\). At first, we set \(\eta(x,t) = \eta_1(x) \eta_2(t)\) for two functions \(\eta_1\) \(\eta_2\). We define \(\eta_1\) by
	\begin{equation*}
		\eta_1(x) = \begin{cases}
			1, \quad & r < \frac{R+R'}{2}\\
			\frac{R- r}{R - \frac{R+R'}{2}} = 2 \frac{R-r}{R-R'}, \quad & \frac{R+R'}{2} \leq r < R\\
			0, \quad & r \geq R,
		\end{cases}
	\end{equation*}
	where \(r = d(z,x)\). The second line just means that \(\eta_1\) is defined to be linear along the radius in \(B(z,R) \setminus B\left(z, \frac{R+R'}{2}\right)\). We define the function \(\eta_2\) by the relation
	\begin{equation*}
		\eta_2(t) = \begin{cases}
			1, \quad & t \geq T'\\
			\frac{t}{T'}, \quad & t < T'.
		\end{cases}
	\end{equation*}
	Now we apply \zcref{thm:energy_estimate} for \(v = u_+\) and \(\tau \in [T', T]\). We obtain
	\begin{equation*}
		\|\nabla \eta\|_U^2 \leq \|\nabla \eta_1\|_U^2 \leq \lambda |\nabla \eta_1|^2 \leq \frac{4\lambda}{(R-R')^2} \leq \frac{\const_U}{\delta},
	\end{equation*}
	where \(\lambda\) is the ellipticity constant of \(U\) and
	\begin{equation*}
		p |\eta \partial_t \eta| \leq p |\partial_t \eta_1| \leq \frac{p}{T'} \leq \frac{\const_p}{\delta},
	\end{equation*}
	hence
	\begin{equation*}
		\int_{B(z, (R+R')/2)} pu(\cdot, \tau)_+^2 d\mu \leq \const_p \int_\amalg u_+^2 \left(\|\nabla \eta\|_U^2 + |\eta \partial_t \eta| \right)d\overline{\mu} \leq \frac{\const_{p, U}}{\delta} H
	\end{equation*}
	follows. We deduce from the uniform positivity of \(p\) that
	\begin{equation}\label{eq:lem:comparision_integrated_cutoffs:2}
		\int_{B(z, (R+R')/2)} u(\cdot, \tau)_+^2 d\mu \leq \frac{\const_{p, U}}{\delta} H.
	\end{equation}	
	Now let us define \(\eta_1\) by 
	\begin{equation*}
		\eta_1(x) = \begin{cases}
			1, \quad & r < R' \\
			\frac{\frac{R+R'}{2}-r}{\frac{R+R'}{2}-R'} = 1 - 2\frac{r-R'}{R-R'}, \quad & R' \leq r < \frac{R+R'}{2}\\
			0, \quad & r \geq \frac{R+R'}{2},
		\end{cases}
	\end{equation*}		
	where \(r = d(z,x)\). The function \(\eta_2\) remains unchanged. We obtain by applying \zcref{thm:energy_estimate} to the function \(v = (u-\theta)_+\) the estimate
	\begin{equation}\label{eq:lem:comparision_integrated_cutoffs:3}
		\int_\amalg |\nabla (v \eta)|^2 d\overline{\mu} \leq \frac{\const_{p, U}}{\delta} \int_\amalg v^2 d\overline{\mu},
	\end{equation}
	since similarly as above
	\begin{equation*}
		\|\nabla \eta\|_U^2 \leq \frac{4\lambda}{(R-R')^2} \leq \frac{\const_U}{\delta}
	\end{equation*}
	holds. For each fixed \(t\) the function \(v \eta\) has support in \(\overline{D_t}\), where
	\begin{equation*}
		D_t := \left\{x \in B\left(z, \frac{R+R'}{2}\right) : u(x,t) > \theta\right\}.
	\end{equation*}
	According to the variational property of the first eigenvalue \(\lambda_1\) of the Dirichlet problem mentioned in the proof of \zcref{thm:bottom_eigenvalue_estimate}, the estimate
	\begin{equation}\label{eq:lem:comparision_integrated_cutoffs:4}
		\int_{B(z,R)} |\nabla (v\eta)|^2 d\mu \geq \lambda_1(D_t) \int_{B(z,R)} (v\eta)^2 d\mu
	\end{equation}
	holds. An application of Markov's inequality to \(\mu(D_t)\) and estimating the result by \zcref{eq:lem:comparision_integrated_cutoffs:2} leads us to the relation
	\begin{equation}\label{eq:lem:comparision_integrated_cutoffs:5}
		\mu(D_t) \leq \frac{1}{\theta^2}\int_{B(z, (R+R')/2)} u(x,\tau)_+^2d\mu(x) \leq \const_{p, U}\frac{H}{\theta^2 \delta}
	\end{equation}
	for all \(\tau \in [T', T]\). Let us fix the constant \(C := \const_{p, U}\) and use the isoperimetric inequality together with \zcref{eq:lem:comparision_integrated_cutoffs:3,eq:lem:comparision_integrated_cutoffs:4,eq:lem:comparision_integrated_cutoffs:5} to deduce
	\begin{equation*}
		\begin{split}
			\frac{C}{\delta}\int_\amalg v^2 d\overline{\mu} &\geq \int_\amalg |\nabla (v\eta)|^2d\overline{\mu} = \int_{T'}^T \int_{B(z,R)} |\nabla (v\eta)|^2 d\overline{\mu} \geq \int_{T'}^T\int_{B(z,R)} \lambda_1(D_t) (v\eta)^2d\mu dt\\
			&\geq \Lambda(CH\theta^{-2}\delta^{-1})\int_{T'}^T \int_{B(z,R)} (v\eta)^2 d\mu dt \geq \Lambda(CH\theta^{-2}\delta^{-1}) \widetilde{H}.
		\end{split}
	\end{equation*}
	\zcref[S]{eq:lem:comparision_integrated_cutoffs:1} follows by rearranging terms. The second statement about the heat equation follows by an analysis of the constants in this special case.
\end{proof}

\begin{theorem}[\(L^2\)-Mean Value Inequality]\label{thm:mean_value_inequality}
	Let \(u \in C^\infty(\overline{\amalg})\) be a subparabolic function with respect to \(L\). If there is an isoperimetric inequality \(\Lambda(v)\) in the ball \(B(z,R)\), then  the estimate
	\begin{equation}\label{eq:thm:mean_value_inequality:1}
		u(z,T)_+^2 \leq \frac{4}{\min \{V(T), W(R)\}} \int_\amalg u_+^2 d\overline{\mu}.	
	\end{equation}
	holds. The constant \(c\) in \zcref{eq:sec:isoperimetric_inequality:3} is chosen sufficiently small, depending only on \(p\) and \(U\). In the special case  \(Lu = \partial_t u - \Delta u\) we may pick any absolute constant \( 0 < c \leq 1/1600\).
\end{theorem}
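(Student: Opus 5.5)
We will prove the estimate by the standard Grigor'yan iteration built on \zcref{lem:comparision_integrated_cutoffs}. By scaling $u$ we may assume $H_0 := \int_\amalg u_+^2 d\overline{\mu} > 0$; otherwise there is nothing to show. We argue by contradiction and suppose that $u(z,T)_+^2 > 4H_0/m$, where $m := \min\{V(T), W(R)\}$. Then we set $\theta_\infty := \sqrt{4H_0/m}\cdot(1-\epsilon)$, so that $u(z,T) > \theta_\infty$. Next we build a sequence of shrinking cylinders $\amalg_k = B(z,R_k)\times(T-T_k,T)$ with $R_0 = R$, $T_0=T$ and radii/heights decreasing to $0$, together with increasing levels $0=\theta_0<\theta_1<\dots<\theta_\infty$. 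We put $H_k := \int_{\amalg_k}(u-\theta_k)_+^2 d\overline{\mu}$. Applying \zcref{lem:comparision_integrated_cutoffs} to the subparabolic function $u-\theta_k$ (subparabolicity is preserved by subtracting constants) with the increment $\theta_{k+1}-\theta_k$ gives
\begin{equation*}
	H_{k+1} \leq \frac{C H_k}{\delta_k \Lambda\big(C\delta_k^{-1}(\theta_{k+1}-\theta_k)^{-2}H_k\big)}, \qquad \delta_k = \min\{T_k-T_{k+1},(R_k-R_{k+1})^2\}.
\end{equation*}

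The key choice is to prescribe the sequence $H_k$ and solve for the increments. We target $H_k \leq 4^{-k}H_0$ and choose $\delta_k$ and $\theta_{k+1}-\theta_k$ such that the argument of $\Lambda$ equals $v_k := 2^{-k}\cdot\mathrm{const}\cdot m$. With $\tau(v)=v/\Lambda(v)$, the requirement $H_{k+1}\le H_k/4$ becomes $\delta_k \gtrsim \tau(v_k)\,(\theta_{k+1}-\theta_k)^2/H_k$, while the argument condition fixes $(\theta_{k+1}-\theta_k)^2 = C H_k/(\delta_k v_k)$. Combining the two yields $\delta_k \simeq \omega^{-1}$-type relations: we get $\delta_k \simeq v_k/\omega(v_k)$ in time, or equivalently $T_k-T_{k+1}\simeq v_k/\omega(v_k)$, and $R_k-R_{k+1}\simeq \sqrt{v_k/\omega(v_k)}$. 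The sums $\sum_k v_k/\omega(v_k)$ and $\sum_k\sqrt{v_k/\omega(v_k)}$ are dyadic Riemann sums of $\int_0^{m}d\xi/\omega(\xi)$ and $\int_0^{m} d\xi/\sqrt{\xi\omega(\xi)}$, using that $\omega$ is increasing. By the definition \zcref{eq:sec:isoperimetric_inequality:3} these are at most $cT$ and $cR$, since $m\le V(T)$ and $m\le W(R)$. Choosing $c$ small (depending on the constant $C$ of the lemma, hence on $p$ and $U$) guarantees that $\sum(T_k-T_{k+1})\le T$ and $\sum(R_k-R_{k+1})\le R$. In the same way the level increments sum to $\sum_k(\theta_{k+1}-\theta_k) = \sum_k \sqrt{CH_k/(\delta_k v_k)} \lesssim \sqrt{H_0/m}\sum_k 2^{-k/2}$, and we tune the constants so that the total is at most $\sqrt{4H_0/m}$.

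Given this, $H_k\to 0$ while the cylinders $\amalg_k$ shrink to $(z,T)$. Since $u$ is smooth (continuous), if $u(z,T)>\lim\theta_k$ then $(u-\theta_k)_+ \geq \varepsilon>0$ on a fixed neighbourhood of $(z,T)$ intersected with $\amalg_k$. This set has measure comparable to $\mu(\amalg_k)$, which decays in a controlled way, so it cannot beat $4^{-k}$ unless it is compatible. This point needs care: a cleaner route is to normalize $H_k/\overline{\mu}(\amalg_k)\to0$, or to note that $H_k/\overline{\mu}(\amalg_k)\to (u(z,T)-\theta_\infty)_+^2$ and to choose the decay rate of $H_k$ faster than that of $\overline{\mu}(\amalg_k)$ (adjusting $4^{-k}$ to $8^{-k}$ if needed). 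This yields $u(z,T)\le\theta_\infty\le\sqrt{4H_0/m}$, which is \zcref{eq:thm:mean_value_inequality:1}.

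The main obstacle is the bookkeeping of constants. We must choose $v_k,\delta_k,\theta_k$ simultaneously so that three conditions hold: the time and radius budgets are met via the integrals defining $V$ and $W$ (this is exactly where $\omega$ and the choice of $c$ enter), the level sum stays below $2\sqrt{H_0/m}$, and the limiting argument at $(z,T)$ works. For the heat equation, $C=50$ from \zcref{lem:comparision_integrated_cutoffs} should give the explicit bound $c\le1/1600$.
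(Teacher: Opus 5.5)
Your iteration follows the paper: \zcref{lem:comparision_integrated_cutoffs} applied to $u-\theta_k$ on nested cylinders, geometric decay of $H_k$, and dyadic sums compared with the integrals defining $V$ and $W$. The concluding step, however, has a genuine gap. You let $\amalg_k$ shrink to the point $(z,T)$. Then $H_k\to 0$ alone says nothing about $u(z,T)$, because $\overline{\mu}(\amalg_k)\to 0$ as well. Your remedy is to force $H_k/\overline{\mu}(\amalg_k)\to 0$ by making $H_k$ decay fast enough, and this is not established. Here $\overline{\mu}(\amalg_k)=V(z,R_k)\,T_k$, with $R_k,T_k$ essentially the tails of $\sum_j\sqrt{\delta_j}$ and $\sum_j\delta_j$. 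Its decay is therefore governed by $\omega$ near $0$ and by how $V(z,r)$ behaves as $r\to 0$. Changing $4^{-k}$ to $8^{-k}$ changes the $\xi_k$, hence the $\delta_k$, hence $R_k$ and $T_k$ too, so adjusting the rate is not an independent knob. Closing this would need an extra estimate tying small-scale volume to $\Lambda$, for example $\Lambda(V(z,\rho))\le\lambda_1(B(z,\rho))\lesssim\rho^{-2}$ for small $\rho$, and the proposal has none. Also, radii and heights decreasing to $0$ is inconsistent with your own construction: the increments are dictated by the $\omega$-relations, and their sums are only bounded by the budgets, not equal to $R$ and $T$.

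The missing idea, which is what the paper does, is not to shrink to a point. Take $c$ small enough that $\sum_k\delta_k\le T/2$ and $\sum_k\sqrt{\delta_k}\le R/2$. Then every $\amalg_k$ contains $B(z,R/2)\times(T/2,T)$, so $H_k\ge\int_{B(z,R/2)\times(T/2,T)}(u-\theta_\infty)_+^2\,d\overline{\mu}$. Hence $H_k\to 0$ gives $u\le\theta_\infty$ on this fixed cylinder, and $u(z,T)\le\theta_\infty$ by continuity; you need neither a contradiction hypothesis nor a rate comparison.

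Your middle step also needs its bookkeeping corrected. The quantity that should be proportional to $2^{-k}m$ is not the argument of $\Lambda$ but its image $\xi_k$ under $\tau$, so the argument should be $\omega(\xi_k)$. Take $\xi_k=2^{-k}m$, $\delta_k=4C\xi_k/\omega(\xi_k)$ and $(\theta_{k+1}-\theta_k)^2=4^{-k}H_0/(4\xi_k)$. Since $\Lambda$ is decreasing and $\Lambda(\omega(\xi))=\omega(\xi)/\xi$, this gives $H_{k+1}\le H_k/4$. The total level is $\tfrac12(1-2^{-1/2})^{-1}\sqrt{H_0/m}<2\sqrt{H_0/m}$. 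The budgets are $\sum_k\delta_k\le 8Cc\,T$ and $\sum_k\sqrt{\delta_k}\le 4\sqrt{C}\,c\,R$. If instead the argument itself equals $v_k$, as you wrote, you get $\delta_k\simeq 1/\Lambda(v_k)=\tau(v_k)/v_k$, which the integrals defining $V$ and $W$ do not control.
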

\begin{remark}
	\(T > 0\) and \(R > 0\) in the preceding \zcref{thm:mean_value_inequality} have to be chosen small enough such that \(V(T)\) and \(W(R)\) are well-defined.
\end{remark}
\begin{proof}
	The idea of the proof is to iterate \zcref{lem:comparision_integrated_cutoffs} for a well chosen value \(\theta > 0\). Consider a sequence of cylinders \(\amalg_k = B(z, r_k) \times (t_k, T)\), where
	\begin{equation}\label{eq:thm:mean_value_inequality:2}
		0 = t_0 < t_1 < t_2 < \dots \leq \frac{T}{2}, \quad R = r_0 > r_1 > r_2 > \dots \geq \frac{R}{2}
	\end{equation}
	and \((r_k - r_{k+1})^2 = t_{k+1} - t_k =: \delta_k\). Let \(\theta > 0\) be an arbitrary but fixed value to be chosen later. Furthermore, we set \(\theta_k = (2-2^{-k})\theta\) and
	\begin{equation*}
		H_k = \int_{\amalg_k} (u-\theta_k)^2_+ d\overline{\mu} \quad \text{and} \quad H = \int_{\amalg} u_+^2 d\overline{\mu}.
	\end{equation*}
	 We note \(H_{k+1} \leq H_k\). Our goal is to find an appropriate \(\theta\) such that \(H_k \to 0\) as \(k \to \infty\). This would imply
	\begin{equation*}
		\int_{B(z, R/2)} \int_{T/2}^T (u-2\theta)_+^2 dt d\mu = 0,
	\end{equation*}
	so we obtain \(u(z, T) \leq 2 \theta\). That is, \(u(z,T)_+^2 \leq 4\theta^2\), which is precisely \zcref{eq:thm:mean_value_inequality:1} for a suitable \(\theta > 0\).\par
	\begin{figure}
		\centering
		\begin{tikzpicture}
  			\fill[blue!30] (0,-1.5) arc(270:450:0.225 and 1.5)
   				-- (-3,1.5) arc(90:270:0.225 and 1.5)
    			-- cycle;
  			\fill[blue!20] (-3,0) ellipse (0.225 and 1.5);
  			\fill[blue!20] ( 0,0) ellipse (0.225 and 1.5);

  			\draw (0,-3) -- (-6,-3) node[pos=0.8, above] { \(\amalg_0\)};
 			\draw (0, 3) -- (-6, 3);
  			\draw (-6,0) ellipse (0.375 and 3);
  			\draw (0,-3) arc(270:450:0.375 and 3);
  			\draw[dashed] (0,3) arc(90:270:0.375 and 3);

  			\draw (0,-2.25) -- (-4.5,-2.25) node[pos=0.8, above] { \(\amalg_k\)};
  			\draw (0, 2.25) -- (-4.5, 2.25);
  			\draw (-4.5,0) ellipse (0.3 and 2.25);
  			\draw (0,-2.25) arc(270:450:0.3 and 2.25);
  			\draw[dashed] (0,2.25) arc(90:270:0.3 and 2.25);

  			\draw (0,-1.5) -- (-3,-1.5);
  			\draw (0, 1.5) -- (-3, 1.5) node[pos=0.5, below, inner sep=1pt, blue!70!black] { \(T/2\)};
  			\draw (-3,0) ellipse (0.225 and 1.5);
  			\draw[dashed] (0,1.5) arc(90:270:0.225 and 1.5);
  			\draw (0,-1.5) arc(270:450:0.225 and 1.5);

  			\draw[->, blue!70!black] (0.75,0.5) -- (0,0) node[pos=0, above right, inner sep=1pt] { \(B(z, R/2)\)};
  			\draw[->] (-6.75,0.5) -- (-6,0) node[pos=0, above left, inner sep=1pt] { \(B(z, R)\)};

  			\draw[->] (-6,-3.5) -- (0.5,-3.5) node[right] {\(t\)};
  			\draw (-6,-3.4) -- (-6,-3.6) node[midway, left] {\(0\)};
		\end{tikzpicture}
		\caption{Nested cylinders \(\amalg_k\)}
	\end{figure}
	\emph{Step 1:} In this step we construct the sequence \(\{\delta_k\}_{k=0}^\infty\). We apply \zcref{lem:comparision_integrated_cutoffs} for the cylinders \(\amalg_{k+1} \subset \amalg_k\) and the functions \(u - \theta_k\) and \(u - \theta_{k+1} = (u - \theta_k) - 2^{-(k+1)}\theta\). It follows
	\begin{equation}\label{eq:thm:mean_value_inequality:3}
		H_{k+1} \leq \frac{C H_k}{\delta_k \Lambda(C \delta_k^{-1} 4^{k+1} \theta^{-2}H_k)},
	\end{equation}
	where \(C\) is an absolute constant depending on \(U\) and \(p\). We show that we can arrange it to choose \(\theta\) and \(\{\delta_k\}_{k=0}^\infty\) such that for all \(k \geq 0\)
	\begin{equation}\label{eq:thm:mean_value_inequality:4}
		H_k \leq \frac{H}{16^k}.
	\end{equation}
	The proof goes by induction on \(k\). For \(k = 0\), the inequality is trivial. Suppose for \(k = m\) we have already found \(\delta_1, \dots, \delta_{m-1}\) such that \zcref{eq:thm:mean_value_inequality:4} is satisfied for all \(k \leq m\). We shall choose \(\delta_m\) such that
	\begin{equation}\label{eq:thm:mean_value_inequality:5}
		\frac{C}{\delta_m \Lambda(C\delta_m^{-1} 4^{-m+1} \theta^{-2} H)} = \frac{1}{16}
	\end{equation}
	is satisfied. Then it would follow from \zcref{eq:thm:mean_value_inequality:3,eq:thm:mean_value_inequality:4}
	\begin{equation*}
		H_{m+1} \leq \frac{H_m}{16} \leq \frac{H}{16^{m+1}}.
	\end{equation*}
	We rewrite \zcref{eq:thm:mean_value_inequality:5} by
	\begin{equation*}
		C \delta_m^{-1} 4^{-m+1} \theta^{-2} H = \omega(4^{-m-1}\theta^{-2}H),
	\end{equation*}
	which means \(\delta_m = 16C 4^{-m-1}\theta^{-2}H / \omega(4^{-m-1}\theta^{-2}H)\) will satisfy \zcref{eq:thm:mean_value_inequality:5}. \(\omega\) is the function from \zcref{eq:sec:isoperimetric_inequality:2}.\par
	\emph{Step 2:} We need to choose \(\theta > 0\) such that \zcref{eq:thm:mean_value_inequality:2} is satisfied, or, equivalently,
	\begin{equation*}
		\sum_{k=0}^\infty \delta_k \leq \frac{T}{2} \quad \text{and} \quad \sum_{k=0}^\infty \sqrt{\delta_k} \leq \frac{R}{2}.
	\end{equation*}
	We have for \(\xi = 4^{-k}\theta^{-2}H\)
	\begin{equation*}
		\sum_{k=0}^\infty \delta_k = \sum_{k=1}^\infty \frac{16 C (4^{-k}\theta^{-2}H)}{\omega(4^{-k}\theta^{-2}H)} \leq 16C \int_0^\infty \frac{4^{-k}\theta^{-2}H}{\omega(4^{-k}\theta^{-2}H)}dk \leq 16C \int_0^{\theta^{-2}H}\frac{1}{\omega(\xi)}d\xi
	\end{equation*}
	(note \(dk = -\frac{d\xi}{\xi\ln(4)}\) and \(\ln(4) > 1\)), and analogously
	\begin{equation*}
		\sum_{k=0}^\infty \sqrt{\delta_k} \leq \sum_{k=1}^\infty \frac{4 \sqrt{C4^{-k}\theta^{-2}H}}{\sqrt{\omega(4^{-k}\theta^{-2}H)}} \leq 4\sqrt{C}\int_0^{\theta^{-2}H} \frac{1}{\sqrt{\xi \omega(\xi)}}d\xi.
	\end{equation*}
	Thus, \zcref{eq:thm:mean_value_inequality:2} is satisfied if
	\begin{equation*}
		\int_0^{\theta^{-2}H} \frac{1}{\sqrt{\omega(\xi)}}d\xi \leq cT \quad \text{and} \quad \int_0^{\theta^{-2}H} \frac{1}{\sqrt{\xi \omega(\xi)}}d\xi \leq cR
	\end{equation*}
	for \(c \leq 1/(32 C)\). Equivalently,
	\begin{equation*}
		\theta^{-2}H \leq V(T) \quad \text{and} \quad \theta^{-2}H \leq W(R).
	\end{equation*}
	Hence, we choose
	\begin{equation*}
		\theta^2 = \frac{H}{\min\{V(T), W(R)\}}
	\end{equation*}
	and \zcref{eq:thm:mean_value_inequality:1} follows. The statement about the heat equation follows by an analysis of the constants.
\end{proof}

\begin{corollary}\label{cor:mean_value_inequality_1}
	Let \(u \in C^\infty(\overline{\amalg})\) be a subparabolic function with respect to \(L\). If there is an isoperimetric inequality 
	\begin{equation*}
		\Lambda(v) = av^{-2/\nu}
	\end{equation*}		
	in the ball \(B(z,R)\), where \(\nu \) and \(a\) are two positive constants, then the following estimate holds:
	\begin{equation}\label{eq:cor:mean_value_inequality_1:1}
		u(z,T)_+^2 \leq \const_{\nu, p, U} \frac{a^{-\nu/2}}{\min \{\sqrt{T}, R\}^{\nu+2}} \int_\amalg u_+^2 d\overline{\mu}.	
	\end{equation}	
\end{corollary}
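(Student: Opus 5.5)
The corollary follows from \zcref{thm:mean_value_inequality} once we compute the functions \(V\) and \(W\) explicitly for \(\Lambda(v) = av^{-2/\nu}\). The plan is to find \(\omega\) in closed form, evaluate the integrals in \zcref{eq:sec:isoperimetric_inequality:3}, and show that \(\min\{V(T), W(R)\}\) is bounded below by \(\const_{\nu,p,U}\, a^{\nu/2}\min\{\sqrt T, R\}^{\nu+2}\).

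First, \(\tau(v) = v/\Lambda(v) = a^{-1}v^{1+2/\nu}\). Its inverse is
\begin{equation*}
	\omega(\xi) = (a\xi)^{\nu/(\nu+2)}.
\end{equation*}
Both integrals in \zcref{eq:sec:isoperimetric_inequality:3} converge at zero and are defined for all values, as in \zcref{ex:V_W_polynomial_isoperimetric_inequality}. Writing \(\gamma = \nu/(\nu+2) \in (0,1)\), we get
\begin{equation*}
	\int_0^{V} \frac{d\xi}{\omega(\xi)} = \frac{a^{-\gamma}}{1-\gamma} V^{1-\gamma}, \qquad \int_0^{W} \frac{d\xi}{\sqrt{\xi\omega(\xi)}} = \frac{a^{-\gamma/2}}{(1-\gamma)/2} W^{(1-\gamma)/2}.
\end{equation*}
Here \(1-\gamma = 2/(\nu+2)\).

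Setting the first integral equal to \(cT\) gives
\begin{equation*}
	V(T) = \left(\tfrac{2cT}{\nu+2}\right)^{(\nu+2)/2} a^{\nu/2} = \const_{\nu,c}\, a^{\nu/2}\, T^{(\nu+2)/2}.
\end{equation*}
Setting the second equal to \(cR\) gives
\begin{equation*}
	W(R) = \left(\tfrac{cR}{\nu+2}\right)^{\nu+2} a^{\nu/2} = \const_{\nu,c}\, a^{\nu/2} R^{\nu+2}.
\end{equation*}
Both quantities are therefore comparable to \(a^{\nu/2}\) times \((\sqrt T)^{\nu+2}\) and \(R^{\nu+2}\) respectively. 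Hence
\begin{equation*}
	\min\{V(T),W(R)\} \geq \const_{\nu,c}\, a^{\nu/2}\min\{\sqrt T, R\}^{\nu+2}.
\end{equation*}
Since \(c\) depends only on \(p\) and \(U\) by \zcref{thm:mean_value_inequality}, the constant depends only on \(\nu,p,U\). Substituting this lower bound into \zcref{eq:thm:mean_value_inequality:1} yields \zcref{eq:cor:mean_value_inequality_1:1}.

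The only delicate point is the well-definedness caveat in the remark after \zcref{thm:mean_value_inequality}, which requires \(T\) and \(R\) small enough that \(V(T)\) and \(W(R)\) are defined. Here both integrals diverge to \(\infty\) as the upper limit grows, because \(1-\gamma>0\). So \(V\) and \(W\) are defined for every \(T,R>0\), and no smallness assumption is needed.
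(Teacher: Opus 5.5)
Your proposal is correct and follows the paper's proof. Both compute \(\omega(\xi)=(a\xi)^{\nu/(\nu+2)}\) explicitly, evaluate the two integrals to obtain \(V(T)=\const_{\nu,p,U}\,a^{\nu/2}T^{(\nu+2)/2}\) and \(W(R)=\const_{\nu,p,U}\,a^{\nu/2}R^{\nu+2}\), and substitute into \zcref{eq:thm:mean_value_inequality:1}. Your remark that \(V\) and \(W\) are defined for all \(T,R>0\), because both integrals diverge at infinity, is a useful clarification that the paper leaves implicit.
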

\begin{proof}
	At first, we have to compute the functions \(V(t)\) and \(W(r)\). We have
	\begin{equation*}
		\xi = \frac{v}{\Lambda(v)} = \frac{v^{(\nu+2)/2}}{a} \Leftrightarrow \omega(\xi) = v = a^{\nu/(\nu+2)} \xi^{\nu/(\nu+2)}.
	\end{equation*}
	Now we need to calculate the integrals from \zcref{eq:sec:isoperimetric_inequality:3}. We obtain
	\begin{equation*}
			\const_{p, \lambda} t = \int_0^{V(t)} \frac{d\xi}{\omega(\xi)} = a^{-\nu/(\nu+2)} \int_0^{V(t)} \xi^{-\nu/(\nu+2)} d\xi = \frac{\nu+2}{2}a^{-\nu/(\nu+2)} V(t)^{2/(\nu+2)},
	\end{equation*}
	and thus
	\begin{equation*}
		V(t) = \const_{\nu, p, U} a^{\nu/2} t^{(\nu+2)/2}.
	\end{equation*}
	Similarly, we have
	\begin{equation*}
		\begin{split}
			\const_{p, U}r &= \int_0^{W(r)} \frac{d\xi}{\sqrt{\xi\omega(\xi)}} = a^{-\nu/(2\nu+4)} \int_0^{W(r)} \xi^{-(2\nu+2)/(2\nu+4)} d\xi\\
			&=  (\nu+2) a^{-\nu/(2\nu+4)} W(r)^{1/(\nu+2)},
		\end{split}
	\end{equation*}
	and therefore
	\begin{equation*}
		W(r) = \const_{\nu, p, U} a^{\nu/2}r^{\nu+2}.
	\end{equation*}
	\zcref{eq:cor:mean_value_inequality_1:1} follows immediately by plugging the values for \(V(T)\) and \(W(R)\) into \zcref{eq:thm:mean_value_inequality:1}.
\end{proof}

\begin{remark}
	We are for \(\nu := n\) and \(a=a(n)\) in the situation of 
	\zcref{ex:isoperimetric_inequality_cartan_hardamard}.
\end{remark}

\begin{corollary}\label{cor:mean_value_inequality_2}
	Let \(u \in C^\infty(\overline{\amalg})\) a subparabolic function with respect to \(L\). If there is an isoperimetric inequality 
	\begin{equation*}
		\Lambda(v) = \frac{b}{R^2}\left(\frac{V(z,R)}{v}\right)^\beta
	\end{equation*}		
	in the ball \(B(z,R)\), where \(b\) and \(\beta\) are two positive constants, then the following estimate holds:
	\begin{equation}\label{eq:cor:mean_value_inequality_2:1}
		u(z,T)_+^2 \leq \const_{\beta,p, U}\frac{b^{-1/\beta} }{\min\{(T/R^2)^{1/\beta}, R^2/T\}} \frac{1}{\overline{\mu}(\amalg)} \int_{\amalg} u_+^2 d\overline{\mu}.
	\end{equation}	
\end{corollary}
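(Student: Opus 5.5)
This follows the pattern of the proof of \zcref{cor:mean_value_inequality_1}. I compute the functions \(\omega\), \(V(t)\) and \(W(r)\) explicitly for the given \(\Lambda\), and then substitute them into \zcref{eq:thm:mean_value_inequality:1} of \zcref{thm:mean_value_inequality}. To shorten notation, set \(K := \frac{b}{R^2} V(z,R)^\beta\), so that \(\Lambda(v) = K v^{-\beta}\). This is precisely the situation of \zcref{cor:mean_value_inequality_1} with \(\beta = 2/\nu\), i.e. \(\nu = 2/\beta\), and with \(a = K\).

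The first step is to invoke \zcref{cor:mean_value_inequality_1} directly, or equivalently to redo the computation. From \(\xi = v/\Lambda(v) = v^{1+\beta}/K\) we get \(\omega(\xi) = (K\xi)^{1/(1+\beta)}\). The defining integrals in \zcref{eq:sec:isoperimetric_inequality:3} then give
\[
V(t) = \const_{\beta,p,U}\, K^{1/\beta} t^{(1+\beta)/\beta}, \qquad W(r) = \const_{\beta,p,U}\, K^{1/\beta} r^{2(1+\beta)/\beta}.
\]
Writing \(K^{1/\beta} = b^{1/\beta} R^{-2/\beta} V(z,R)\), we obtain
\[
V(T) = \const\, b^{1/\beta} V(z,R)\, T \left(\frac{T}{R^2}\right)^{1/\beta}, \qquad W(R) = \const\, b^{1/\beta} V(z,R)\, R^2 .
\]

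The second step is to rewrite both quantities in terms of \(\overline{\mu}(\amalg) = V(z,R)\,T\). This gives
\[
V(T) = \const\, b^{1/\beta}\, \overline{\mu}(\amalg) \left(\frac{T}{R^2}\right)^{1/\beta}, \qquad W(R) = \const\, b^{1/\beta}\, \overline{\mu}(\amalg)\, \frac{R^2}{T}.
\]
Hence
\[
\min\{V(T), W(R)\} \geq \const_{\beta,p,U}\, b^{1/\beta}\, \overline{\mu}(\amalg) \min\left\{\left(\frac{T}{R^2}\right)^{1/\beta}, \frac{R^2}{T}\right\}.
\]
Substituting this lower bound into \zcref{eq:thm:mean_value_inequality:1} yields \zcref{eq:cor:mean_value_inequality_2:1}.

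The only delicate point is well-definedness, which the remark after \zcref{thm:mean_value_inequality} requires. For a power-type \(\Lambda\), however, both defining integrals converge at zero and grow without bound as the upper limit increases. So \(V\) and \(W\) are defined for all arguments, and no smallness condition on \(T\) or \(R\) is needed. The remaining work is bookkeeping of exponents and of the constants in \(c\), which depend only on \(\beta\), \(p\) and \(U\).
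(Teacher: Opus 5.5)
Your proposal is correct and is essentially the paper's proof. The paper applies \zcref{cor:mean_value_inequality_1} with \(a = bV(z,R)^\beta/R^2\) and \(\nu = 2/\beta\), then uses \(\overline{\mu}(\amalg) = V(z,R)\,T\); you do exactly this, just writing out \(V(T)\) and \(W(R)\) explicitly, which avoids the implicit case split \(T \lessgtr R^2\) in \(\min\{\sqrt{T},R\}^{\nu+2}\).
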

\begin{proof}
	Let us apply \zcref{cor:mean_value_inequality_1} for \(a = b V(z,R)^\beta / R^2\) and \(\nu = 2/\beta\). Using \(V(z,R) T = \overline{\mu}(\amalg)\), we obtain \zcref{eq:cor:mean_value_inequality_2:1}.
\end{proof}

\begin{remark}
	Here, we are in the situation of \zcref{ex:isoperimetric_inequality_volume_doubling_poincare}. By the way, the form of \zcref{eq:cor:mean_value_inequality_2:1} is the reason why we call \zcref{thm:mean_value_inequality} a Mean Value inequality.
\end{remark}

\section{Harnack Inequality}\label{sec:harnack_inequality}

In this section, we derive the parabolic Harnack inequality. We first establish a weakened form of the Harnack inequality and then use it to obtain the desired result. Throughout this section, we assume that the standing assumptions from \zcref{subsec:known_results} are in force. 

\subsection{Weak Harnack Inequality}\label{subsec:weak_harnack_inequality}

In what follows we consider the cylinder
\begin{equation*}
	\amalg_{2R} := B(z,2R) \times (0, 4R^2)
\end{equation*}
and the subcylinders \(\amalg_{2R, +}, \amalg_{2R, -} \subset \amalg\) defined by
\begin{equation*}
	\amalg_{2R,+} := B(z,R) \times (3R^2, 4R^2) \quad \text{and} \quad \amalg_ {2R,-} := B(z,R) \times (R^2, 2R^2).
\end{equation*}
Moreover, we define the measure \(\overline{\mu}\) on \(M \times \R\) by \(d\overline{\mu} := d\mu dt\).

\begin{theorem}[Weak Harnack Inequality]\label{thm:weak_harnack_inequality}
	Let \(u  \in C^\infty(\overline{\amalg}_{2R})\) be a non-negative superparabolic function with respect to \(L\). We fix \(\kappa > 0\) and set
	\begin{equation*}
		H := \left\{ (x,t) \in \amalg_R : u(x,t) \geq \kappa\right\}.
	\end{equation*}
	Then, for any \(\delta > 0\), there exists \(\varepsilon = \varepsilon(\delta, A, a, N, p, U) > 0\) such that if 
	\begin{equation*}
		\overline{\mu}(H) \geq \delta \overline{\mu} (\amalg_R),
	\end{equation*}
	then
	\begin{equation}\label{eq:thm:weak_harnack_inequality:1}
		\inf_{\amalg_{2R, +}} u \geq \varepsilon \kappa.
	\end{equation}
\end{theorem}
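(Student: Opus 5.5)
We follow Grigor'yan's approach: pass to the logarithm of the supersolution and combine the Poincaré-type inequality with the \(L^2\)-mean value inequality. We may normalise \(\kappa = 1\), since \(u/\kappa\) is again superparabolic. Here \(\amalg_R\) is read as the cylinder \(B(z,R)\times(0,R^2)\) (or a comparable lower subcylinder of \(\amalg_{2R}\)); only its measure and its position below \(\amalg_{2R,+}\) matter.

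\emph{Step 1 (logarithmic estimate).} Consider \(w := \log\frac{1}{u+\epsilon_0}\), or equivalently \(w=(\log(1/u))_+\) after truncation. A direct computation gives
\begin{equation*}
	p\partial_t w - \divg(U\nabla w) \leq -\|\nabla w\|_U^2,
\end{equation*}
so \(w\) is subparabolic with an additional negative gradient term. We test this inequality against \(\eta^2\), with \(\eta(y)=(\dist(y,\partial B(z,2R))/2R)^{\alpha/2}\) as in \zcref{thm:poincare-type_inequality}, and use Young's inequality to absorb the cross term \(\langle\nabla w,\nabla\eta\rangle_U\eta\). This yields, for \(m(t):=\int w(\cdot,t)\eta^2d\mu\),
\begin{equation*}
	\frac{d}{dt}m(t) + c\int \|\nabla w\|_U^2\eta^2 d\mu \leq \frac{\const}{R^2}V(z,2R).
\end{equation*}

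\emph{Step 2 (ODE argument).} On the time slices where \(\mu(\{u(\cdot,t)\geq 1\}\cap B(z,R))\gtrsim \delta V(z,R)\), the function \(w\) vanishes on a set \(H_t\) of proportional measure. By volume doubling this set is a fixed fraction of \(V(z,2R)\), so \zcref{thm:poincare-type_inequality} applied to \(w_+\) gives
\begin{equation*}
	\int\|\nabla w\|^2\eta^2 \gtrsim \frac{\delta}{R^2V(z,2R)}m(t)^2.
\end{equation*}
By Fubini and \(\overline{\mu}(H)\geq\delta\overline{\mu}(\amalg_R)\), such good times occupy a set of measure \(\gtrsim\delta R^2\) in \((0,R^2)\). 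The differential inequality \(m' \leq \frac{C V}{R^2} - \frac{c\delta}{R^2 V}m^2\) on good times, together with \(m'\leq CV/R^2\) on the remaining times, forces \(m(t)\leq C_\delta V(z,2R)\) for all \(t\in(R^2,4R^2)\). The mechanism is that if \(m\) were large, the quadratic term would drive it down within time \(\delta R^2\). Here one works with \(w_\epsilon=\log(1/(u+\epsilon))\) and lets \(\epsilon\to0\) at the end. This Riccati-type argument is the main obstacle: it requires choosing the truncation and handling the fact that good times need not be consecutive. We would first try working with \(\max(m, K V)\) for a large constant \(K\), which reduces the argument to monotone comparison.

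\emph{Step 3 (mean value).} Since \(w_+\) is subparabolic (the extra term only helps), \zcref{cor:mean_value_inequality_2} applies, combined with \zcref{ex:isoperimetric_inequality_volume_doubling_poincare}, on cylinders of size \(\simeq R\) based at points of \(\amalg_{2R,+}\). This bounds \(\sup_{\amalg_{2R,+}} w_+^2\) by \(\frac{1}{\overline{\mu}}\int w_+^2\). The \(L^1\) bound from Step 2 does not immediately give an \(L^2\) bound, so we would instead use the mean value inequality for the subparabolic function \(\sqrt{w_+}\), or interpolate via the \(L^1\) version obtained by iterating the mean value inequality on shrinking cylinders (a standard \(L^2\to L^1\) trick using doubling). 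Since \(\eta\simeq 1\) on \(B(z,R)\), this gives \(\sup_{\amalg_{2R,+}} w \leq C_\delta\). Hence \(u\geq e^{-C_\delta}=:\varepsilon\) on \(\amalg_{2R,+}\), which is \zcref{eq:thm:weak_harnack_inequality:1}.
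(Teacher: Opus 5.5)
Your Steps 1 and 2 follow the paper's argument. The paper also works with \(v=-\ln u\) (after replacing \(u\) by \(u+m\)) and tests against \(\eta^2\) with \(\eta=(d/2R)^{\alpha/2}\). This gives \(\int p\,\partial_t(v_+)\eta^2\le-\frac12\int\|\nabla v_+\|_U^2\eta^2+2\int\|\nabla\eta\|_U^2\); for the truncated function this is justified by a level-set argument on \(\{v>\theta\}\). The paper then runs the Riccati inequality \(I'\le-K(t)I^2+D\). The difficulty you anticipate with non-consecutive good times disappears if you do not threshold. Keep the time-dependent coefficient \(K(t)\propto\mu(H_t)/(R^2V(z,2R))\) from \zcref{thm:poincare-type_inequality}, set \(J=I-Dt\), and integrate \(\frac{d}{dt}(-1/J)\le-K(t)\) over \((0,R^2)\). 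Fubini gives \(\int_0^{R^2}\mu(H_\tau)\,d\tau=\overline{\mu}(H)\ge\delta R^2V(z,R)\), hence \(I\lesssim V(z,2R)/\delta\) on \([R^2,4R^2]\). Your thresholded version also works, with \(1/\delta^2\) instead of \(1/\delta\).

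The real difference is Step 3. The paper upgrades the weighted \(L^1\) bound to an \(L^2\) bound as follows. Integrating the energy inequality over \((R^2,4R^2)\), together with the bound on \(I(R^2)\), gives \(\int_{R^2}^{4R^2}\int_{B(z,5R/3)}\|\nabla v_+\|_U^2\lesssim V(z,2R)/\delta\). Then \zcref{thm:poincare_1}, applied around the mean of \(v_+\) over \(B(z,4R/3)\) (which is \(\lesssim I(t)/V(z,2R)\)), gives \(\iint v_+^2\lesssim R^2V(z,2R)/\delta^2\). One application of \zcref{cor:mean_value_inequality_2} on cylinders \(B(x,R/3)\times(t-R^2/9,t)\) finishes.

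Your second option is valid: derive an \(L^1\) mean value inequality from the \(L^2\) one by the standard iteration \(f(\sigma)\le\frac12 f(\sigma')+C(\sigma'-\sigma)^{-\gamma}\Phi\) over nested cylinders, with polynomial dependence supplied by \zcref{prop:balls_ratio_estimate}. It avoids the gradient bound and the second Poincaré inequality, at the cost of an iteration lemma. You need \(\eta\) bounded below on the whole region swept by those cylinders, not just on \(B(z,R)\); any \(B(z,\theta\cdot 2R)\) with \(\theta<1\) works.

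Your first option is wrong as stated. Since \(\sqrt{\cdot}\) is concave, it does not preserve subparabolicity in general. For your \(w\) one computes \(L\sqrt{w}\le\frac{\|\nabla w\|_U^2}{4w^{3/2}}(1-2w)\), which is non-positive only where \(w\ge\frac12\). So \(\sqrt{w_+}\) is not subparabolic near \(\{w=0\}\), and it is not even Lipschitz there. It can be repaired with \(\sqrt{\max(w,1)}\) or a smooth version, but only by using the extra term \(-\|\nabla w\|_U^2\), which your sketch does not invoke at that point. As written, rely on the iteration route.
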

\begin{proof} We may assume without loss of generality \(\kappa = 1\) by replacing \(u\) by \(u / \kappa\). Moreover, it suffices to prove the theorem for positive subparabolic functions \(u\). Indeed, let us assume \(\inf_{\amalg_{2R}} u = 0\) and let \(m > 0\) be an arbitrary positive real number. We note
	\begin{equation*}
		u \geq 1 \Leftrightarrow u+m \geq 1+m,
	\end{equation*}		
	so we obtain according to \zcref{eq:thm:weak_harnack_inequality:1} for the positive superparabolic function \(u+m\)
	\begin{equation*}
		\inf_{\amalg_{2R,+}} (u+m) \geq \epsilon(1+m).
	\end{equation*}
	\zcref[S]{eq:thm:weak_harnack_inequality:1} follows by passing \(m \to 0\).\par
	Let us consider the function
	\begin{equation*}
		v = \ln\left(\frac{1}{u}\right) = -\ln u.
	\end{equation*}
	In terms of this function, we have
	\begin{equation*}
		H := \left\{ (x,t) \in \amalg_R : v(x,t) \leq 0 \right\} \quad \text{and} \quad \overline{\mu}(H) \geq \delta \overline{\mu} (\amalg_R),
	\end{equation*}
	and we need to prove
	\begin{equation}\label{eq:thm:weak_harnack_inequality:2}
		\sup_{\amalg_{2R, +}} v \leq \const_{\delta, A, a, N, p, U} v.
	\end{equation}\par
	\emph{Step 1:} We show that the function \(v\) is subparabolic in \(\amalg\) with respect to \(L\). Let us check the Neumann condition. Let \(n\) be the outward-pointing unit normal vector field along \(\partial M\). Then
	\begin{equation*}
		\langle \nabla v, n\rangle_U \restrict{\partial M \cap B(z,R)} = -\frac{1}{u} \langle \nabla u, n \rangle_U \restrict{\partial M \cap B(z,R)} = 0.
	\end{equation*}
	Furthermore, rewriting the expression \(Lv\) leads to
	\begin{equation*}
		Lv = - \frac{p \partial_t u}{u} + \frac{\divg (U \nabla u)}{u} - \frac{\|\nabla u\|^2_U}{u^2} \leq - \frac{L u}{u} \leq 0.
	\end{equation*}		
	\par
	\emph{Step 2:} Let \(\eta \in C_0^\infty(B(z, 2R))\). We claim that for all \(t \in (0, 4R^2)\)
	\begin{equation}\label{eq:thm:weak_harnack_inequality:3}
		\int_{B(z, 2R)} p\partial_t (v_+) \eta^2 d\mu \leq -\frac{1}{2} \int_{B(z,2R)} \|\nabla v_+\|_U^2 \eta^2 d\mu + 2 \int_{B(z,2R)} \|\nabla \eta\|_U^2 d\mu.
	\end{equation}
	Fix \(t \in (0, 4R^2)\), let \(\theta > 0\) be a regular value of both \(v(-,t)\) and \(v(-, t)\restrict{\partial M}\) and define
	\begin{equation*}
		\Omega_\theta := \{ x \in B(z, 2R) : v(x,t) > \theta \}.
	\end{equation*}
	\begin{figure}
		\centering
		\begin{tikzpicture}
  			\draw[thick, dashed, green!80!black] (0,1) circle (2);
  			
  			\begin{scope}
    			\clip (-4,-3) .. controls (0,1) .. (-4,4) -- (4,4) -- (4,-3) -- cycle;
    			\clip (0,1) circle (2);
    			\clip (0.5,-4) circle (5);
    			\fill[gray!10] (-7,-7) rectangle (7,7);
  			\end{scope}

  			\begin{scope}
    			\clip (-4,-3) .. controls (0,1) .. (-4,4) -- (4,4) -- (4,-3) -- cycle;
    			\fill[blue!20, opacity=0.25] (0.5,-4) circle (5);
    			\fill[green!20, opacity=0.25] (0,1) circle (2);
    			\draw[thick, blue!80!black] (0.5,-4) circle (5);
    			\draw[thick, green!80!black] (0,1) circle (2);
  			\end{scope}
  
  			\node[gray!25!black] at (0.4,-0.4) { \(\Omega_\theta\)};
  			\node[blue!50!black] at (0.7,-2.5) { \(\{ v > \theta\}\)};
  			\node[green!50!black] at (0.5,1.8) { \(B(z, 2R)\)};
  			\node at (2.1,2.8) { \(M\)};
  			\draw[black, thick] (-4,-3) .. controls (0,1) .. (-4,4) node[pos=0.9, below left, inner sep=1pt] { \(\partial M\)};
\end{tikzpicture}
	\caption{The set \(\Omega_\theta\) and the three contributing boundary pieces}
	\end{figure}
	We obtain by \zcref{cor:integration_by_parts}
	\begin{equation*}
			\int_{\Omega_\theta} p \partial_t v \eta^2 d\mu \leq \int_{\Omega_\theta} \divg(U \nabla v) \eta^2 d\mu\\
			= -2 \int_{\Omega_\theta} \langle \nabla v, \nabla \eta\rangle_U \eta d\mu + \int_{\partial \Omega_\theta \cap M} \langle \nabla v, n\rangle_U \eta^2 d\widetilde{\mu}.
	\end{equation*}
	Our goal is to show that the integral over the boundary is non-positive. We have
	\begin{equation*}
		\partial \Omega_\theta \cap M \subset \partial B(z, 2R) \cup \{v=\theta\} \cup \partial M.
	\end{equation*}
	Note that \(\eta^2 = 0\) in a neighbourhood of \(\partial B(z, 2R)\). Moreover, on \(\partial \Omega_\theta \cap \partial M\) the outward-pointing unit normal of \(\partial M\) coincides with that of \(\partial \Omega_\theta \cap M\), hence the Neumann condition yields \(\langle \nabla v, n\rangle_U = 0\). It remains examine the behaviour on \(\{v = \theta\}\). Since the gradient \(\nabla v\) is normal to the level set, and because \(v > \theta\) inside \(\Omega_\theta\), the vector field \(\nabla v\) points inward. Thus the outward-pointing unit normal along \(\{v = \theta\}\) is given by
	\begin{equation*}
		n = -\frac{\nabla v}{|\nabla v|}.
	\end{equation*}
	Consequently,
	\begin{equation*}
		\langle \nabla v,n\rangle_U = -\frac{1}{|\nabla v|} \|\nabla v\|_U^2 \leq 0,
	\end{equation*}
	and therefore the integral over the boundary is non-positive. We arrive at
	\begin{equation*}
		\int_{\Omega_\theta} p\partial_t v \eta^2 d\mu \leq -2 \int_{\Omega_\theta} \langle\nabla v, \nabla\eta\rangle_U \eta d\mu.
	\end{equation*}
	Note
	\begin{equation*}
		-2\langle \nabla v, \nabla \eta\rangle_U \eta\leq 2\|\nabla v\|_U \eta\|\nabla \eta\|_U \leq \frac{1}{2} \|\nabla v\|_U^2\eta^2 + 2\|\nabla \eta\|_U^2,
	\end{equation*}
	where the first inequality follows from the Cauchy-Schwarz inequality and the second from Young's inequality\footnote{More precisely, we used the following consequence of Young's inequality: \(2|ab| \leq ta^2+\frac{1}{t}b^2\) for all \(a,b \in \R\) and \(t > 0\). Alternatively, it follows by rearranging \(0 \leq \|\eta \nabla v + 2\nabla \eta\|_U
	^2\).}.
	Combining the above estimates, we obtain
	\begin{equation*}
			\int_{\Omega_\theta} p\partial_t v \eta^2 d\mu \leq -\frac{1}{2}\int_{\Omega_\theta} \|\nabla v\|_U^2\eta^2d\mu + 2\int_{\Omega_\theta} \|\nabla \eta\|_U^2 d\mu
	\end{equation*}
	Letting \(\theta \to 0\), which is justified by the Dominated Convergence Theorem, yields \zcref{eq:thm:weak_harnack_inequality:3}.\par
	\emph{Step 3:} Now we generalize \zcref{eq:thm:weak_harnack_inequality:3} to Lipschitz functions supported in \(\overline{B(z,2R)}\). We fix a real number \(r < R\) and a smooth cutoff function \(\varphi\) of \(B(z, 2r)\) in \(B(z, 2R)\) such that \(\varphi \eta \in W_c^{1,2}(B(z, 2R)) \subset W_0^{1,2}(B(z,2R))\) and rename \(\varphi \eta\) back into \(\eta\). Consequently, we find a sequence of smooth functions \(\eta_k \in C_0^\infty(B(z, 2R))\) with
	\begin{equation*}
		\eta_k \xrightarrow{W^{1,2}} \eta \quad \text{as} \ k \to \infty.
	\end{equation*}		
	 We obtain
	\begin{equation*}
		\begin{split}
			\left|\int_ {B(z,2r)} p \partial_t (v_+) \left(\eta^2 - \eta_k^2\right)d\mu\right| &\leq \const \|\eta^2 - \eta_k^2\|_{L^1(B(z,2r))}^2\\
			&\leq \const \|\eta + \eta_k\|_{L^2(B(z,2r))}^2\|\eta - \eta_k\|_{L^2(B(z,2r))}^2\\
			&\xrightarrow{k \to \infty} 0
		\end{split}
	\end{equation*}
	and similarly
	\begin{equation*}
		\left| \int_{B(z,2r)} \|\nabla v_+\|_U^2 (\eta^2-\eta_k^2)d\mu\right| \xrightarrow{k\to\infty} 0.
	\end{equation*}
	For the second integral on the right-hand side of \zcref{eq:thm:weak_harnack_inequality:3} we note that
	\begin{equation*}
		\int_{B(z,2r)} \|\nabla \eta - \nabla \eta_k\|_U^2	d\mu \simeq \int_{B(z,2r)} |\nabla \eta - \nabla \eta_k|^2	d\mu \xrightarrow{k \to \infty} 0, 
	\end{equation*}
	hence
	\begin{equation*}
		\int_{B(z,2r)} \|\nabla \eta_k\|_U^2d\mu \xrightarrow{k \to \infty} \int_{B(z,2r)} |\nabla \eta|^2d\mu
	\end{equation*}
	follows. The desired claim follows by letting \(r \to R\) using the Dominated Convergence Theorem.\par
	Let us set \(\eta(x) = (d(x)/2R)^{\alpha/2}\), where \(d(x) = \dist(x, \partial B(z,2R))\) and \(\alpha\) is the constant from \zcref{thm:poincare-type_inequality}. The inequality
	\begin{equation*}
		|\nabla \eta| = \frac{\alpha}{4R} \left(\frac{d}{2R}\right)^{\alpha/2 - 1} |\nabla d| \leq \frac{\alpha}{4R}
	\end{equation*}
	follows, therefore we arrive at
	\begin{equation}\label{eq:thm:weak_harnack_inequality:4}
		\int_{B(z,2R)} |\nabla \eta|^2 d\mu \leq \frac{\alpha^2}{16R^2} V(z,2R).
	\end{equation}\par
	\emph{Step 4:} The function
	\begin{equation*}
		I(t) := \int_{B(z,2R)} v_+(\cdot, t) \eta^2 d\mu
	\end{equation*}
	satisfies for all \(t \in [R^2, 4R^2]\) the inequality
	\begin{equation}\label{eq:thm:weak_harnack_inequality:5}
		I(t) \leq \const_{A,a,N, p} \frac{V(z, 2R)}{\delta}.
	\end{equation}
	Let 
	\begin{equation*}
		H_t : = \{x \in B(z,R) : v(x,t) < 0\}.
	\end{equation*}
	According to \zcref{thm:poincare-type_inequality} we have
	\begin{equation}\label{eq:thm:weak_harnack_inequality:6}
		\int_{B(z,2R)} |\nabla v_+|^2\eta^2 d\mu \geq \const_{A,a,N} \frac{\mu(H_t)}{R^2V(z,2R)^2} \left(\int_{B(z,2R)} v_+\eta^2\right)^2.
	\end{equation}
	We deduce from \zcref{eq:thm:weak_harnack_inequality:3,eq:thm:weak_harnack_inequality:4,eq:thm:weak_harnack_inequality:5,eq:thm:weak_harnack_inequality:6} the relation
	\begin{equation}\label{eq:thm:weak_harnack_inequality:7}
			\frac{d}{dt}I(t) = \int_{B(z,2R)} \partial_t(v_+(\cdot, t)) \eta^2 d\mu \leq \const_p \int_{B(z,2R)} p \partial_t(v_+(\cdot, t)) \eta^2 d\mu \leq -K(t)I(t)^2+D,
	\end{equation}
	where
	\begin{equation*}
		K(t) = \const_{A,a,N,p, U} \frac{\mu(H_t)}{V(z,2R)R^2} \quad\text{and}\quad D = \const_{p, U}\frac{\alpha^2V(z,2R)}{R^2}.
	\end{equation*}
	Now we are going to prove the auxiliary estimate
	\begin{equation}\label{eq:thm:weak_harnack_inequality:8}
		I(t) \leq \left( \int_0^{R^2} K(\tau) d\tau \right)^{-1} + Dt
	\end{equation}
	for all \(R^2 \leq r \leq 4R^2\). If \(I(t^*) \leq Dt^*\) for some \(t^* \leq R^2\), then we obtain for \(t \geq t^*\) the inequality \(I(t) \leq Dt\) because \((d/dt)I \leq D\). \zcref{eq:thm:weak_harnack_inequality:8} follows in this case. Suppose \(I(t) > Dt\) for all \(t \leq R^2\). We define \(J(t) = I(t) - Dt\), and \zcref{eq:thm:weak_harnack_inequality:7} yields \((d/dt)J \leq -KJ^2\). We obtain
	\begin{equation*}
		\frac{d}{dt}\left(-\frac{1}{J(t)}\right) = \frac{1}{J^2(t)} \frac{d}{dt} J(t) \leq -K(t),
	\end{equation*}
	and integrating from \(0\) to \(R^2\) yields
	\begin{equation*}
		-\frac{1}{J(R^2)} \leq \frac{1}{J(0)} - \frac{1}{J(R^2)} = \int_0^{R^2} \frac{d}{d\tau}\left(-\frac{1}{J(\tau)}\right)d\tau \leq -\int_0^{R^2} K(\tau)d\tau.
	\end{equation*}
	Hence, we arrive at the inequality
	\begin{equation*}
		J(R^2) \leq \left(\int_0^{R^2} K(\tau) d\tau\right)^{-1}.
	\end{equation*}
	Since for \(t > R^2\) we have by the fundamental theorem of calculus
	\begin{equation*}
		I(t) \leq I(R^2) + D(t-R^2) = J(R^2)+Dt \leq \left( \int_0^{R^2} K(\tau) d\tau \right)^{-1} + Dt,
	\end{equation*}
	from this and the preceding estimate for \(J(R^2)\) we obtain \zcref{eq:thm:weak_harnack_inequality:8}. We note
	\begin{equation*}
		\int_0^{R^2} \mu(H_\tau) d\tau = \overline{\mu}(H) \geq \delta \overline{\mu}(\amalg_R) = \delta R^2 V(z,R) \geq \frac{\delta R^2}{A}V(z,2R),
	\end{equation*}
	and substituting the values of \(K(\tau)\) and \(D\) into \zcref{eq:thm:weak_harnack_inequality:8}
	\begin{equation*}
		\begin{split}
			I(t) &\leq \const_{A,a,N,p, U} R^2 V(z,2R)^2 \left(\int_0^{R^2} \mu(H_\tau)d\tau\right)^{-1} + \const_{p, U} \alpha^2 V(z,2R)\\
			&\leq \const_{A,a,N,p, U} \frac{A}{\delta} V(z,2R) + \const_{p, U} \frac{\alpha^2 V(2R)}{\delta}\\
			&\leq \const_{A,a,N,p, U} \frac{V(z,2R)}{\delta},
		\end{split}
	\end{equation*}
	where we have used \(t \leq 4R^2\) and \(\delta \leq 1\) as well. This is exactly \zcref{eq:thm:weak_harnack_inequality:5}.\par
	\emph{Step 5:} Now we finally prove \zcref{eq:thm:weak_harnack_inequality:1}.
	Integrating \zcref{eq:thm:weak_harnack_inequality:3} with respect to \(t\) from \(R^2\) to \(4R^2\), applying \zcref{eq:thm:weak_harnack_inequality:5} and using the the uniform ellipticity of \(U\) leads to
	\begin{equation*}
		\begin{split}
			\int_{B(z,2R)} p\eta^2 [v_+(\cdot, t)]_{t=R^2}^{t=4R^2} d\mu
			&\leq -\frac{1}{2} \int_{R^2}^{4R^2} \int_{B(z,2R)} \|\nabla v_+\|_U^2 \eta^2 d\mu dt + 6R^2 \int_{B(z,2R)} \|\nabla \eta\|_U^2 d\mu\\
			&\leq -\frac{1}{2} \int_{R^2}^{4R^2} \int_{B(z,2R)} \|\nabla v_+\|_U^2 \eta^2 d\mu dt + \const_{A,a,N,U} V(z,2R).
		\end{split}
	\end{equation*}
	Since \(\eta^2\restrict{B(z, 5R/3)} \geq 1/6^\alpha\) and \(\delta \leq 1\), we obtain from rearranging the previous equation, uniform ellipticity of \(U\) and \zcref{eq:thm:weak_harnack_inequality:5} the estimate
	\begin{equation}\label{eq:thm:weak_harnack_inequality:9}
		\begin{split}
			\int_{R^2}^{4R^2} &\int_{B(z, 5R/3)} \|\nabla v_+\|_U^2 d\mu dt \leq 6^\alpha \int_{R^2}^{4R^2}\int_{B(z, 5R/3)} \|\nabla v_+\|_U^2\eta^2 d\mu dt\\
			&\leq 2 \cdot 6^\alpha \int_{B(z,2R)} p \eta^2 v_+(\cdot, R^2)d\mu + \const_{A,a,N,p,U}V(z, 2R)\\
			&\leq \const_{A,a,N,p, U}\frac{V(z, 2R)}{\delta} .
		\end{split}
	\end{equation}
	We set
	\begin{equation*}
		\overline{v}(t) := \fint_{B(z, 4R/3)} v_+(x,t)d\mu(x),
	\end{equation*}
	and \zcref{eq:thm:weak_harnack_inequality:5} yields
	\begin{equation}\label{eq:thm:weak_harnack_inequality:10}
		\overline{v}(t) \leq \frac{6^\alpha}{V(z,3R/4)} I(t) \leq \frac{\const_{A,a,N, p}}{\delta}
	\end{equation}
	for all \(t \in [R^2, 4R^2]\). Now we apply \zcref{eq:thm:poincare_1:2} from \zcref{thm:poincare_1} to get
	\begin{equation*}
		\begin{split}
			\int_{B(z, 4R/3)} v_+^2d\mu &= \int_{B(z, 4R/3)} (\overline{v} -(v_+ - \overline{v}))^2d\mu\leq 2\int_{B(z,4R/3)} \overline{v}^2d\mu + 2\int_{B(z,4R/3)} (v_+ - \overline{v})^2d\mu\\
			&\leq 2 \overline{v}^2 V(z, 4R/3) + \const_{A,a,N} R^2 \int_{B(z, 5R/3)} |\nabla v_+|^2d\mu\\
			&\leq 2 \overline{v}^2 V(z, 4R/3) + \const_{A,a,N,U} R^2 \int_{B(z, 5R/3)} \|\nabla v_+\|_U^2d\mu.
		\end{split}
	\end{equation*}
	We integrate this inequality with respect to \(t\) from \(R^2\) to \(4R^2\) and use \zcref{eq:thm:weak_harnack_inequality:9,eq:thm:weak_harnack_inequality:10} to derive
	\begin{equation}\label{eq:thm:weak_harnack_inequality:11}
			\int_{R^2}^{4R^2} \int_{B(z, 4R/3)} v_+^2 d\mu dt\leq \const_{A, a, N,p, U}\frac{R^2}{\delta^2} V(z, 2R).
	\end{equation}
	Here we have also used \(\delta \leq 1\) and \(V(z, 4R/3) \leq V(z, 2R)\). Due to \zcref{ex:isoperimetric_inequality_volume_doubling_poincare} we may apply \zcref{cor:mean_value_inequality_2} to the function \(v\) in the cylinders
	 \begin{equation*}
	 	B(x, R/3) \times (t-(R/3)^2, t) \subset B\left(z, \frac{4R}{3}\right) \times (R^2, 4R^2),
	 \end{equation*}
	 for  \((x,t) \in \amalg_{2R,+}\). We obtain
	 \begin{equation*}
	    \begin{split}
	 		v_+(x,t)^2 &\leq \frac{\const_{A, a, N, p, U}}{R^2 V(x, R/3)} \int_{t-R^2/9}^t \int_{B(x, R/3)} v_+^2 d\mu dt \leq \frac{\const_{A, a, N, p, U}}{R^2 V(z, 2R)} \int_{R^2}^{4R^2} \int_{B(z, 4R/3)} v_+^2 d\mu dt
 		\end{split}
	 \end{equation*}
	 and together with \zcref{eq:thm:weak_harnack_inequality:11} this implies \(v(x,t) \leq \const_{A, a, N, p, U}/\delta\). So \zcref{eq:thm:weak_harnack_inequality:2} is proven.
\end{proof}

\begin{corollary}[Alternative Form of the Weak Harnack Inequality]\label{cor:alt_weak_harnack_inequality}
Let \(u \in C^\infty(\overline{\amalg}_{2R})\) be a non-negative subparabolic function in the cylinder \(\amalg_{2R}\) and set
	\begin{equation*}
		H := \{ (x,t) \in \amalg_R : u(x,t) \leq 0\}.
	\end{equation*}
	For any \(\delta > 0\) there exists \(\varepsilon = \varepsilon(\delta, A, a, N, U) > 0\) such that if
	\begin{equation*}
		\overline{\mu}(H) \geq \delta \overline{\mu}(\amalg_R),
	\end{equation*}
	then
	\begin{equation}\label{eq:cor:alt_weak_harnack_inequality:1}
		\sup_{\amalg_{2R}} u \geq (1+\varepsilon) u(z, 4R^2).
	\end{equation}
\end{corollary}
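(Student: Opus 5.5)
Set \(m := \sup_{\amalg_{2R}} u\). The idea is to apply \zcref{thm:weak_harnack_inequality} to \(w := m - u\). If \(m = 0\), then \(u \equiv 0\) and \zcref{eq:cor:alt_weak_harnack_inequality:1} holds trivially, so assume \(m > 0\). Since \(u\) is subparabolic, \(Lw = -Lu \geq 0\). The Neumann condition carries over because \(\nabla w = -\nabla u\). Moreover \(w \geq 0\) in \(\amalg_{2R}\), so \(w\) is a non-negative superparabolic function in \(\amalg_{2R}\), and it is smooth up to the closure since \(u \in C^\infty(\overline{\amalg}_{2R})\). Hence \zcref{thm:weak_harnack_inequality} applies to \(w\).

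Take \(\kappa := m\). On the set \(H = \{u \leq 0\}\cap\amalg_R\) we have \(w = m - u \geq m = \kappa\). Therefore \(H \subset \{(x,t)\in\amalg_R : w \geq \kappa\}\), and the hypothesis \(\overline{\mu}(H)\geq\delta\overline{\mu}(\amalg_R)\) is inherited by the latter set. \zcref{thm:weak_harnack_inequality} then gives \(\varepsilon_0 = \varepsilon_0(\delta,A,a,N,p,U) > 0\) with \(\inf_{\amalg_{2R,+}} w \geq \varepsilon_0 m\). In other words,
\[
u \leq (1-\varepsilon_0)\,m \quad \text{on } \amalg_{2R,+}.
\]

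The point \((z,4R^2)\) lies on the top boundary of \(\amalg_{2R,+} = B(z,R)\times(3R^2,4R^2)\), not in its interior. Since \(u\) is continuous on \(\overline{\amalg}_{2R}\), the bound extends to the closure, so \(u(z,4R^2)\leq(1-\varepsilon_0)m\). Rearranging,
\[
m \geq \frac{u(z,4R^2)}{1-\varepsilon_0} \geq (1+\varepsilon_0)\,u(z,4R^2),
\]
which is \zcref{eq:cor:alt_weak_harnack_inequality:1} with \(\varepsilon = \varepsilon_0\). We may assume \(\varepsilon_0 < 1\), shrinking it if necessary. If \(u(z,4R^2) = 0\) the claim is trivial anyway.

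Two technical points remain. First, \zcref{thm:weak_harnack_inequality} lets \(\varepsilon\) depend on \(p\), while the corollary lists only \(\delta,A,a,N,U\); I would treat \(p\) as an implicit dependence, since its bounds are fixed data of \(L\). Second, the height of \(\amalg_R\) must be read with the same convention in both statements, which holds because both refer to the same cylinder \(\amalg_R\).
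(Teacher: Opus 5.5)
Your proof is correct and follows the paper's argument. The paper normalizes \(\sup_{\amalg_{2R}} u = 1\) and applies the Weak Harnack Inequality to \(1-u\) with \(\kappa = 1\), which after scaling is exactly your choice of \(w = m-u\) with \(\kappa = m\). Your continuity argument for the boundary point \((z,4R^2)\) of \(\amalg_{2R,+}\) and your remark on the \(p\)-dependence make explicit two points the paper passes over silently.
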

\begin{proof}
	If \(u(z, 4R^2) \leq 0\), the \zcref{eq:cor:alt_weak_harnack_inequality:1} is obviously satisfied. So we may assume \(u(z, 4R^2) > 0\). By rescaling we assume without loss of generality
	\begin{equation*}
		\sup_{\amalg_{2R}} u = 1.
	\end{equation*}
	Consider the non-negative superparabolic function \(v = 1-u\) in \(\amalg\). By construction we have
	\begin{equation*}
		H = \{ (x,t) \in \amalg_R : v(x,t) \geq 1\}.
	\end{equation*}
	The condition \(\overline{\mu}(H) \geq \delta\overline{\mu}(\amalg_R)\) implies according to \zcref{thm:weak_harnack_inequality}
	\begin{equation*}
		\inf_{\amalg_R} v \geq \varepsilon = \varepsilon(\delta, A, a, N, U).
	\end{equation*}
	It follows that \(v(z, 4R^2) \geq \varepsilon\), and hence
	\begin{equation*}
		u(z, 4R^2) \leq 1-\varepsilon \leq \frac{1}{1+\varepsilon} = \frac{1}{1+\varepsilon} \sup_{\amalg_{2R}} u
	\end{equation*}
	follows. This estimate is equivalent to \zcref{eq:cor:alt_weak_harnack_inequality:1}.
\end{proof}

\subsection{Harnack Inequality}\label{subsec:harnack_inequality}

In this section we establish the Harnack inequality. At first glance it may seem unusual to begin with a result called the Weak Harnack inequality. The reason is that the full Harnack inequality can be derived from a careful application of this weaker version, while the Weak Harnack Inequality itself rests on the results stated in \zcref{subsec:known_results}. Some of our figures in this chapter are inspired by those in \cite{grigoryan1991heat}.

\begin{lemma}[Iterated Weak Harnack Inequality]\label{lem:iterated_weak_harnack_inequality}
	Let \(u \in C^\infty(\overline{\amalg}_{2R})\) be a non-negative superparabolic function in \(\amalg_{2R}\). Let
	\begin{equation*}
		\amalg^0 = B(y,r) \times (\tau, \tau + r^2) \subset \amalg_{R/2}
	\end{equation*}
	and
	\begin{equation*}
		H := \{ (x,t) \in \amalg^0 : u(x,t) > 1\}.
	\end{equation*}
	If
	\begin{equation*}
		\overline{\mu}(H) \geq \delta \overline{\mu}(\amalg^0)
	\end{equation*}
	for a real number \(\delta > 0\), then
	\begin{equation}\label{eq:lem:iterated_weak_harnack_inequality:1}
		u(z,4R^2) \geq \const_{\delta, A, a, N, p, U} \left( \frac{\overline{\mu}(\amalg^0)}{\overline{\mu}(\amalg_R)}\right)^l,
	\end{equation}
	where \(l = l(A, a, N, p, U) > 0\) is an absolute constant.
\end{lemma}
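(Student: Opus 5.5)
The plan is to iterate the Weak Harnack Inequality (\zcref{thm:weak_harnack_inequality}) along a chain of cylinders. At each step the radius roughly doubles and the cylinder moves forward in time. The chain starts from the small cylinder \(\amalg^0\) and ends at a cylinder of size comparable to \(\amalg_R\) whose "top" contains the point \((z,4R^2)\). Each application costs a multiplicative constant \(\varepsilon_0\). The number of steps will be of order \(\log_2(R/r)\), and volume doubling will convert \(\varepsilon_0^{\log_2(R/r)}\) into a power of \(\overline{\mu}(\amalg^0)/\overline{\mu}(\amalg_R)\).

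\emph{First step.} Regard \(\amalg^0 = B(y,r)\times(\tau,\tau+r^2)\) as the \(\amalg_{r}\)-cylinder of a larger cylinder \(B(y,2r)\times(\tau,\tau+4r^2)\) (after a time shift). This larger cylinder lies in \(\amalg_{2R}\) because \(\amalg^0\subset\amalg_{R/2}\). Applying \zcref{thm:weak_harnack_inequality} with \(\kappa=1\) and density \(\delta\) gives
\[
u\geq\varepsilon(\delta)\quad\text{on } B(y,r)\times(\tau+3r^2,\tau+4r^2).
\]
This new cylinder has full density, so from now on the density parameter is a fixed number and every further step costs a fixed \(\varepsilon_1=\varepsilon_1(A,a,N,p,U)\).

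\emph{Inductive step.} Given \(u\geq m_k\) on \(B(y_k,r_k)\times(s_k,s_k+r_k^2)\), choose a cylinder \(B(y_{k+1},r_{k+1})\times(s_{k+1},s_{k+1}+r_{k+1}^2)\) with \(r_{k+1}=2r_k\) (or a fixed ratio) whose \(\amalg_{r_{k+1}}\)-part contains the previous set with proportion at least \(\delta_0\). Volume doubling makes \(\delta_0\) an absolute constant. The new set must be chosen so that its doubled cylinder still lies in \(\amalg_{2R}\). Applying \zcref{thm:weak_harnack_inequality} with \(\kappa=m_k\) gives \(m_{k+1}\geq\varepsilon_1 m_k\) on the next top cylinder.

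The centers \(y_k\) are moved toward \(z\): since \(d(y,z)<R/2\), after about \(\log_2(R/r)\) steps the balls have radius comparable to \(R\) and contain \(z\). After that, a bounded number of further steps with radius comparable to \(R\), centered at \(z\), lands on a top cylinder containing \((z,4R^2)\). This yields
\[
u(z,4R^2)\geq \varepsilon(\delta)\,\varepsilon_1^{k_0+C}, \qquad k_0\simeq\log_2(R/r).
\]

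\emph{Conversion to a volume ratio.} By \zcref{prop:balls_ratio_estimate} (lower bound),
\[
\frac{\overline{\mu}(\amalg_R)}{\overline{\mu}(\amalg^0)}=\frac{R^2V(z,R)}{r^2V(y,r)}\geq \Big(\frac{R}{r}\Big)^{2},
\]
up to constants. Hence \(\varepsilon_1^{\log_2(R/r)}=(r/R)^{\log_2(1/\varepsilon_1)}\geq \const\cdot\big(\overline{\mu}(\amalg^0)/\overline{\mu}(\amalg_R)\big)^{l}\) with \(l=\tfrac12\log_2(1/\varepsilon_1)\).

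\emph{Main obstacle.} The hard part is the time bookkeeping. Each step advances time by roughly \(3r_k^2\), and these increments must sum exactly so that the final top cylinder contains the time \(4R^2\), while all doubled cylinders remain inside \(\amalg_{2R}\) (in particular, no cylinder may start before time \(0\) or reach beyond \(4R^2\)). Because \(\sum 3r_k^2\) is geometric and dominated by the last term, I expect this to work as follows. First, use intermediate steps where the radius stays fixed, or grows by a ratio less than \(2\), to adjust the timing. Second, exploit the fact that \(\tau+r^2\leq R^2/4\), which leaves enough room before \(4R^2\). One also has to check that the overlap proportion \(\delta_0\) stays uniform when the centers shift; this follows from doubling, since consecutive balls are comparable and nested up to a factor.
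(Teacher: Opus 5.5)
Your architecture matches the paper's. You iterate the Weak Harnack Inequality along cylinders with doubling radii, pay $\varepsilon(\delta)$ once and then a fixed factor $\varepsilon_1$ per step, and convert the $O(\log(R/r))$ steps into a power of $\overline{\mu}(\amalg^0)/\overline{\mu}(\amalg_R)$. Your conversion is correct; in fact $B(y,r)\subset B(z,R)$ already gives $\overline{\mu}(\amalg^0)/\overline{\mu}(\amalg_R)\le (r/R)^2$, so the lower bound of the ball-ratio estimate is not needed.

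\textbf{The gap.} The one nontrivial ingredient is never constructed. You need an explicit chain in which each top set sits inside the next bottom cylinder with a fixed proportion, every application takes place inside $\amalg_{2R}$ (in particular starting at a time $\ge 0$), and the chain ends with a set of measure $\gtrsim\overline{\mu}(\amalg_R)$ inside $\amalg_R$. You leave this as an expectation. The remedies you sketch (recentering toward $z$, fixed-radius intermediate steps, tuning time increments so a top cylinder contains $4R^2$) are neither carried out nor needed.

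The last of these rests on a misconception. The final step is a single application of the Weak Harnack Inequality in $\amalg_{2R}$ itself. That application only needs a set of proportional measure inside $\amalg_R=B(z,R)\times(0,R^2)$ on which $u$ is bounded below. Its top cylinder $\amalg_{2R,+}$ then has $(z,4R^2)$ in its closure, so no timing has to be matched.

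\textbf{The missing idea: freeze both the center and the starting time.} Take $\amalg^k=B(y,2^kr)\times(\tau,\tau+4^kr^2)$ and $\widetilde{\amalg}^k=B(y,2^kr)\times(\tau+3\cdot 4^kr^2,\tau+4^{k+1}r^2)$. These are exactly the bottom and top cylinders of the Weak Harnack configuration in $\amalg^{k+1}$. Automatically $\widetilde{\amalg}^{k-1}\subset\amalg^k$, with $\overline{\mu}(\widetilde{\amalg}^{k-1})=\overline{\mu}(\amalg^{k-1})\ge\overline{\mu}(\amalg^k)/(4A)$. So the density is fixed, no time bookkeeping arises, and every cylinder starts at $\tau\ge 0$.

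Stop at the largest $K$ with $\amalg^K\subset\amalg_R$, i.e. $d(y,z)+2^Kr\le R$ and $\tau+4^Kr^2\le R^2$. All applications then happen in $\amalg^1,\dots,\amalg^K\subset\amalg_R$. Suppose $2^{K+1}r<R/2$. Because $d(y,z)+r\le R/2$, the failure of $\amalg^{K+1}\subset\amalg_R$ could not be spatial. A temporal failure would need $\tau+4^{K+1}r^2>R^2$ with $\tau\le R^2/4$, forcing $2^{K+1}r>\sqrt{3}R/2$, a contradiction. Hence $2^{K+1}r\ge R/2$, and by doubling $\overline{\mu}(\amalg^K)\ge\overline{\mu}(\amalg_R)/(16A^3)$.

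Consequently $\widetilde{\amalg}^{K-1}\subset\amalg^K\subset\amalg_R$, on which $u\ge\varepsilon(\delta)\varepsilon_1^{K-1}$, fills a fixed fraction of $\amalg_R$; for $K=0$ use $H$ itself. One last application in $\amalg_{2R}$ reaches $(z,4R^2)$. Finally, $4^K\overline{\mu}(\amalg^0)\le\overline{\mu}(\amalg^K)\le\overline{\mu}(\amalg_R)$ bounds $K$ and gives $l=-\log_4\varepsilon_1$. No recentering toward $z$ is needed, since $V(y,\rho)$ for $\rho\ge R/4$ is already comparable to $V(z,R)$ by doubling.
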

\begin{proof}
	We consider the cylinders
	\begin{equation*}
		\amalg^k = B(y, 2^kr) \times (\tau, \tau + 4^kr^2) \quad \text{and} \quad \widetilde{\amalg}^k = B(y, 2^kr) \times (\tau + 3\cdot 4^k r^2, \tau + 4^{k+1}r^2),
	\end{equation*}
	where \(k \geq 0\) is an integer.
	\begin{figure}
		\centering
		\begin{tikzpicture}
  			\fill[blue!30] (-0.75,-2.5) arc(180:360:0.75 and 0.225)
   			 	-- (0.75,-1.5) arc(0:180:0.75 and 0.225)
   			 	-- cycle;
  			\fill[blue!20] (0,-1.5) ellipse (0.75 and 0.225);
  			\fill[blue!20] (0,-2.5) ellipse (0.75 and 0.225);
  			\draw (-0.75,-2.5) -- (-0.75,-1.5);
  			\draw ( 0.75,-2.5) -- ( 0.75,-1.5);
  			\draw (0,-1.5) ellipse (0.75 and 0.225);
  			\draw[dashed] ( 0.75,-2.5) arc(0:180:0.75 and 0.225);
  			\draw (-0.75,-2.5) arc(180:360:0.75 and 0.225);
  		
  			\fill[blue!30] (-0.75,-4.5) arc(180:360:0.75 and 0.225)
   			 	-- (0.75,-3.5) arc(0:180:0.75 and 0.225)
   			 	-- cycle;
  			\fill[blue!20] (0,-3.5) ellipse (0.75 and 0.225);
  			\fill[blue!20] (0,-4.5) ellipse (0.75 and 0.225);
  			\draw (-0.75,-4.5) -- (-0.75,-3.5);
  			\draw ( 0.75,-4.5) -- ( 0.75,-3.5);
  			\draw (0,-3.5) ellipse (0.75 and 0.225);
  			\draw[dashed] ( 0.75,-4.5) arc(0:180:0.75 and 0.225);
  			\draw (-0.75,-4.5) arc(180:360:0.75 and 0.225);

 			\draw (-1.5,-4.5) -- (-1.5,-1.5);
 			\draw ( 1.5,-4.5) -- ( 1.5,-1.5);
  			\draw (0,-1.5) ellipse (1.5 and 0.375);
  			\draw (-1.5,-4.5) arc(180:360:1.5 and 0.375);
  			\draw[dashed] (1.5,-4.5) arc(0:180:1.5 and 0.375);
  
  			\draw (-3,-6) -- (-3,0);
			\draw (3,-6) -- ( 3,0);
			\draw (0,0) ellipse (3 and 0.75);
			\draw (-3,-6) arc(180:360:3 and 0.75);
			\draw[dashed] (3,-6) arc(0:180:3 and 0.75);
	
			\fill (0,-4.5) ellipse (1.5pt and 0.375pt);
			\node at  (2,-6) { \(\amalg_{2R}\)};
			\draw [->,blue!70!black] (2.2, -1.2) -- (0.5,-2) node[pos=0, right, inner sep=1pt] { \(\widetilde{\amalg}^k\)};
			\draw [->,blue!70!black] (2.2, -4.5) -- (0.5,-4) node[pos=0, right, inner sep=1pt] { \(\amalg^k\)};
			\draw[->] (-1.2,-4.8) -- (0,-4.5) node[pos=0, below, inner sep=1pt] { \((y,\tau)\)};
			\draw [->] (-1.75 ,-3) -- (-1,-3) node[pos = 0, left, inner sep =1pt] { \(\amalg^{k+1}\)};
		\end{tikzpicture}
		\caption{Subcylinders \(\amalg^k, \widetilde{\amalg}^k \subset \amalg^{k+1}\)}
	\end{figure}		
	 Note that \(\amalg^k \subset \amalg^{k+1}\) and \(\widetilde{\amalg}^k \subset \amalg^{k+1}\). We apply \zcref{thm:weak_harnack_inequality} successively in these cylinders. For \(\amalg^0, \widetilde{\amalg}^0 \subset \amalg^1\) we obtain the lower bound
	\begin{equation*}
		\inf_{\widetilde{\amalg}^0} u \geq \varepsilon_1 = \varepsilon_1(\delta, A, a, N, p, U).
	\end{equation*}
	Now we consider the function \(u_1 = u/\varepsilon_1\). Note \(u_1\restrict{\widetilde{\amalg}^0} \geq 1\) and for any \(k \geq 1\)
	\begin{equation*}
		\overline{\mu}(\amalg^{k-1}) = V(y, 2^{k-1}r) \cdot 4^{k-1} r^2 \geq \frac{V(y, 2^k) \cdot 4^k}{4A} = \frac{\overline{\mu}(\amalg^k)}{4A}.
	\end{equation*}
	We obtain by \zcref{thm:weak_harnack_inequality}
	\begin{equation*}
		\inf_{\widetilde{\amalg}^1} u_1 \geq \varepsilon_2 = \varepsilon_2(A, a, N, p, U).
	\end{equation*}
	Now we define for each \(k \geq 2\) functions \(u_k := u/(\varepsilon_1\varepsilon_2^k) = u_{k-1}/\varepsilon_2\) and assume inductively that
	\begin{equation*}
		\inf_{\widetilde{\amalg}^{k-1}} u_{k-1} \geq \varepsilon_2.
	\end{equation*}	 
	We obtain \(u_k \restrict{\widetilde{\amalg}^{k-1}} \geq 1\), and get by \zcref{thm:weak_harnack_inequality}
	\begin{equation*}
		\inf_{\widetilde{\amalg}^k} u_k \geq \varepsilon_2.
	\end{equation*}
	Rearranging terms leads to
	\begin{equation*}
		\inf_{\widetilde{\amalg}^k} u \geq \varepsilon_1 \varepsilon_2^k.
	\end{equation*}
	Now let \(K\) be the largest integer for which \(\amalg^K \subset \amalg_R\) holds. The estimate we have obtained so far is satisfied, in particular, for \(k = K\). Now we show the auxiliary estimate 
	\begin{equation}\label{eq:lem:iterated_weak_harnack_inequality:2}
		2^{K+1}r \geq \frac{R}{2}.
	\end{equation}
	Assume for the contrary that \(2^{K+1}r < R/2\). This implies \(\tau + 4^{k+1}r^2 > R^2\) because \(\amalg^{K+1} \not\subset \amalg_R\). Since \(\tau \leq \tau + r^2 \leq (R/2)^2\), we must have
	\begin{equation*}
		2^{K+1}r \geq \sqrt{R^2 - \tau} \geq \sqrt{R^2 - \frac{R^2}{4}} = \frac{\sqrt{3} R}{2} \geq \frac{R}{2}.
	\end{equation*}
	A contradiction! So \zcref{eq:lem:iterated_weak_harnack_inequality:2} holds in fact. We obtain
	\begin{equation*}
		\overline{\mu}(\amalg^K) = 4^K r^2 V(y, 2^K r) \geq \frac{R^2}{16} V(y, R/4) \geq \frac{1}{16A^3} \overline{\mu}(\amalg_R).
	\end{equation*}
	Applying again \zcref{thm:weak_harnack_inequality} to the function \(u_K = u/(\varepsilon_1 \varepsilon_2^K)\) in the cylinders \(\amalg_R\subset \amalg_{2R}\) and \(\amalg_{2R,+}\) we arrive at
	\begin{equation*}
		\inf_{\amalg_{2R,+}} u \geq \varepsilon_1\varepsilon_2^K \varepsilon_3,
	\end{equation*}
	where \(\varepsilon_3 = \varepsilon_3(A, a, N, p, U)\). Since \(\overline{\mu}(\amalg^0) 4^K \leq \overline{\mu}(\amalg^K) \leq \overline{\mu}(\amalg_R)\), it follows that
	\begin{equation*}
		K \leq \log_4\left(\frac{\overline{\mu}(\amalg_R)}{\overline{\mu}(\amalg^0)}\right).
	\end{equation*}
	This implies
	\begin{equation*}
		\inf_{\amalg_{2R, +}} u \geq \varepsilon_1\varepsilon_3 \left(\frac{\overline{\mu}(\amalg^0)}{\overline{\mu}(\amalg_R)}\right)^{-\log_4 \varepsilon_2},
	\end{equation*}
	which yields \zcref{eq:lem:iterated_weak_harnack_inequality:1} with \(c = \varepsilon_1\varepsilon_3\) and \(l = -\log_4 \varepsilon_2\).
\end{proof}

\begin{lemma}[Lemma of Growth]\label{lem:lemma_of_growth}
	Let \(u \in C^\infty(\overline{\amalg}_R)\) be a non-negative parabolic function in the cylinder \(\amalg_R\) and set
	\begin{equation*}
		E = \{(x,t) \in \amalg_R : u(x,t) > \kappa \}
	\end{equation*}	 
	for \(\kappa > 0\). Assume that
	\begin{equation*}
		 u(z, R^2) \geq 2\kappa,
	\end{equation*}
	then there exists \(\eta = \eta(A,a,N, p, U) > 0\) such that
	\begin{equation*}
		\overline{\mu}(E) \leq \eta \overline{\mu}(\amalg_R)
	\end{equation*}
	implies
	\begin{equation}\label{eq:lem:Lemma_of_Growth:1}
		\sup_{\amalg_R} u \geq 4\kappa.
	\end{equation}
\end{lemma}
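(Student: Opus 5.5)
We argue by contradiction. Suppose \(\sup_{\amalg_R} u < 4\kappa\); we will choose \(\eta\) so small that this, together with \(\overline{\mu}(E) \leq \eta\overline{\mu}(\amalg_R)\), is incompatible with \(u(z,R^2) \geq 2\kappa\). The natural tool is the \(L^2\)-mean value inequality in the form of \zcref{cor:mean_value_inequality_2}. By \zcref{ex:isoperimetric_inequality_volume_doubling_poincare} it applies in the cylinder \(\amalg_R = B(z,R)\times(0,R^2)\), with \(T = R^2\). This matters because for \(T=R^2\) the factor \(\min\{(T/R^2)^{1/\beta}, R^2/T\}\) equals \(1\), so the constant depends only on \(A,a,N,p,U\). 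We apply it not to \(u\) but to the truncated function
\begin{equation*}
w := u - \kappa,
\end{equation*}
which is parabolic, hence subparabolic. It satisfies the same Neumann condition as \(u\), since constants are killed by \(L\) and by \(\nabla\).

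\emph{Step 1.} From \zcref{cor:mean_value_inequality_2},
\begin{equation*}
\kappa^2 \leq w(z,R^2)_+^2 \leq \frac{C_0}{\overline{\mu}(\amalg_R)} \int_{\amalg_R} w_+^2 \, d\overline{\mu},
\end{equation*}
where \(C_0 = C_0(A,a,N,p,U)\) and we used \(u(z,R^2)\geq 2\kappa\).

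\emph{Step 2.} The function \(w_+\) is supported in \(E\), and on \(\amalg_R\) we have \(w_+ \leq \sup u - \kappa < 3\kappa\) under the contradiction hypothesis. Hence
\begin{equation*}
\int_{\amalg_R} w_+^2 \, d\overline{\mu} \leq 9\kappa^2\,\overline{\mu}(E) \leq 9\eta\kappa^2\,\overline{\mu}(\amalg_R).
\end{equation*}
Combining with Step 1 gives \(\kappa^2 \leq 9C_0\eta\kappa^2\), which is false once \(\eta < 1/(9C_0)\). This contradiction proves \zcref{eq:lem:Lemma_of_Growth:1}.

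The main obstacle I expect is technical rather than conceptual. First, \zcref{thm:mean_value_inequality} evaluates at the top point \((z,T)\) of the cylinder, which here is exactly \((z,R^2)\). Second, the constant in \zcref{cor:mean_value_inequality_2} must be checked to be uniform: \(b\) and \(\beta\) come from \zcref{thm:bottom_eigenvalue_estimate} and depend only on \(A,a,N\). Third, the remark after \zcref{thm:mean_value_inequality} requires \(V(T)\) and \(W(R)\) to be well-defined. For the power-type \(\Lambda\) of \zcref{ex:V_W_polynomial_isoperimetric_inequality} the integrals converge on all of \((0,\infty)\), so this causes no problem. If smoothness up to the closed cylinder is an issue at \(t=R^2\), I would apply the argument in \(B(z,R)\times(0,T)\) for \(T<R^2\) close to \(R^2\) and let \(T\to R^2\).
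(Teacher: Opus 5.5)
Your proof is correct, but it follows a different route from the paper. You apply \zcref{cor:mean_value_inequality_2} once, to $(u-\kappa)_+$ in $\amalg_R$ with $T=R^2$; as you note, the time--space factor there equals $1$. You then combine this with $\int_{\amalg_R}(u-\kappa)_+^2\,d\overline{\mu}\le(\sup_{\amalg_R}u-\kappa)^2\,\overline{\mu}(E)$. This gives directly $(u(z,R^2)-\kappa)^2\le C_0\eta\,(\sup_{\amalg_R}u-\kappa)^2$, so the contradiction framing is not needed.

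The paper instead runs Landis' iteration. It fixes $\varepsilon$ from \zcref{thm:weak_harnack_inequality} with $\delta=1/2$, picks $m$ with $(1+\varepsilon)^m>3$, and sets $r=R/(2m)$. It then applies \zcref{cor:alt_weak_harnack_inequality} to $u-1$ along a chain of $m$ cylinders of radius $2r$, each topped at the point produced in the previous step. Volume doubling and the choice $\eta=A_1^{-1}(2m)^{-2-\alpha_1}/2$ ensure that $\{u<1\}$ fills half of each small lower cylinder, so $u-1$ gains a factor $1+\varepsilon$ per step.

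Your version has several advantages. It is shorter and avoids the bookkeeping needed to keep the chain inside $\overline{\amalg}_R$. It gives an explicit $\eta=1/(9C_0)$ in terms of the mean value constant alone. It needs only subparabolicity of $u$ and no sign condition. It also costs nothing in hypotheses, since the paper's proof of \zcref{thm:weak_harnack_inequality} already invokes \zcref{cor:mean_value_inequality_2}.

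The paper's route buys structure rather than strength. It derives growth purely from the measure-to-pointwise weak Harnack statement. That is the form of Landis' method that carries over to settings where such a statement is available but a mean value inequality is not.
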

\begin{proof}
	By replacing \(u\) by \(u/\kappa\) we may assume without loss of generality \(\kappa = 1\). We set \(\delta = 1/2\) in \zcref{thm:weak_harnack_inequality} and fix \(\varepsilon = \varepsilon(\delta, A, a, N, p, U)\). The idea is to apply \zcref{cor:alt_weak_harnack_inequality} to the function \(u-1\) to obtain the estimate
	\begin{equation}\label{eq:lem:Lemma_of_Growth:2}
		\sup_{\amalg_R} (u-1) \geq (1+\varepsilon)^m > 3.
	\end{equation}
	For this, we need to find a way to apply \zcref{cor:alt_weak_harnack_inequality} appropriately and ensure \((1 + \varepsilon)^m > 3\). Choose an arbitrary integer \(m \geq 0\) such that
	\begin{equation*}
		(1+\varepsilon)^m > 3.
	\end{equation*}
	Now we set \(r = R/(2m)\) and consider the function \(v = u-1\) in
	\begin{equation*}
		\amalg_{2r}^0 := B(z, 2r) \times (R^2-4r^2, R^2) \quad \text{and} \quad \amalg_r^0 := B(z,r) \times (R^2-4r^2, R^2-3r^2).
	\end{equation*}
	We choose \(\eta\) so small that \(\overline{\mu}(E \cap \amalg_r^0) \leq \overline{\mu}(\amalg_r^0)/2\). Indeed, \zcref{prop:balls_ratio_estimate} implies
	\begin{equation*}
		\begin{split}
			\overline{\mu}(E) &\leq \eta \overline{\mu}(\amalg_R) = \eta R^2 V(z,R) \leq \eta R^2 A_1 \left(\frac{R}{r}\right)^{\alpha_1} V(z,r) = \eta \left(\frac{R^2}{r^2}\right) A_1 \left(\frac{R}{r}\right)^{\alpha_1} \overline{\mu}(\amalg_r^0)\\
			&= A_1 \eta (2m)^{2+\alpha_1} \overline{\mu}(\amalg_r^0).
		\end{split}
	\end{equation*}
	We set \(\eta = A_1^{-1} (2m)^{-2-\alpha_1}/2\) to obtain the desired inequality \(\overline{\mu}(E \cap \amalg_r^0) \leq \overline{\mu}(\amalg_r^0) / 2\). Now define
	\begin{equation*}
		H := \{ (x,t) \in \amalg_r^0 : u(x,t) < 1\}.
	\end{equation*}
	We immediately obtain  \(\overline{\mu}(H) \geq \overline{\mu}(\amalg_r^0)/2\) and applying \zcref{cor:alt_weak_harnack_inequality} to the function \(u-1\) yields
	\begin{equation*}
		\sup_{\amalg^0_{2r}} (u-1) \geq (1+\varepsilon)(u(z,R^2)-1) \geq 1+\varepsilon.
	\end{equation*}
	Therefore, we find a point \((x_1, t_1) \in \overline{\amalg}_{2r}^0\) with \(u(x_1, t_1) - 1 \geq 1+\varepsilon\). Similarly as before consider the function \(u-1\) in
	\begin{equation*}
		\amalg_{2r}^1 := B(x_1, 2r) \times (t_1 - 4r^2, t_1) \quad \text{and} \quad \amalg_r^1 := B(x_1, r) \times (t_1-4r^2, t_1-3r^2).
	\end{equation*}
	By the same line of arguments as above we have \(\overline{\mu}(E \cap \amalg_r^1) \leq \overline{\mu}(\amalg_r^1)/2\) and obtain according to \zcref{cor:alt_weak_harnack_inequality}
	\begin{equation*}
		\sup_{\amalg_{2r}^1} (u-1) \geq (1+\varepsilon)(u(x_1,t_1) - 1) \geq (1+\varepsilon)^2.
	\end{equation*}
	If we continue this procedure, we obtain points \(\{(t_k, x_k)\}_{k=1}^K\) with
	\begin{equation*}
		u(x_k, t_k) - 1 \geq (1+\varepsilon)^k, \quad d(x_k, x_{k+1}) \leq 2r \quad \text{and} \quad 0\leq t_k - t_{k+1} \leq 4r^2
	\end{equation*}
	for all \( 1 \leq k \leq K\), where \(K\) (possibly infinite) is the index at which the sequence terminates.
	\begin{figure}
		\centering
		\begin{tikzpicture}
  			\begin{scope}
    			\clip (0,-1.125) rectangle (7.5,4.5);
    			\filldraw[fill=blue!30, even odd rule, thick]
      				(-10.85,6.525) circle[radius=14.5]
      				(-10.85,6.525) circle[radius=15];

    			\draw (1.65,0) rectangle (3.9,1.125);
    			\draw (2.2125,0) rectangle (3.3375,0.375);

    			\draw (2.025,1.125) rectangle (4.275,2.25);
    			\draw (2.5875,1.125) rectangle (3.7125,1.5);

    			\draw (2.4375,2.25) rectangle (4.6875,3.375);
    			\draw (3.0,2.25) rectangle (4.125,2.625);

    			\draw (2.625,3.375) rectangle (4.875,4.5);
    			\draw (3.1875,3.375) rectangle (4.3125,3.75);
  			\end{scope}

  			\draw[thick] (0,-1.125) rectangle (7.5,4.5);

  			\fill (3.75,4.5) circle (1pt) node[above, inner sep=1pt] { \((z,R^2)\)};
  			\fill (3.5625,3.375) circle (1pt);
  			\fill (3.15,2.25) circle (1pt);
  			\fill (2.775,1.125) circle (1pt);
  			\fill (2.15,0) circle (1pt);

  			\node[blue!70!black] at (2.025,-0.675) { \(E\)};
  			\draw[->] (4.1,0.5) -- (3,0.1875) node[pos=0, right, inner sep=1pt] { \(\amalg_r^k\)};
  			\node at (3.375,0.75) { \(\amalg_{2r}^k\)};

  			\draw[->] (3,-0.375) -- (2.15,0) node[pos=0, below right, inner sep=1pt] { \((x_{k+1}, t_{k+1})\)};
  			\draw[->] (1.5,1.5) -- (2.775,1.125) node[pos=0, left, inner sep=1pt] { \((x_k, t_k)\)};
  			\node at (6.8, -0.5) { \(\amalg_R\)};
		\end{tikzpicture}
		\caption{Subcylinders \(\amalg_r^k \subset \amalg_{2r}^k \subset \amalg_R\) and \(E\)}
	\end{figure}		
	Since \(4r^2m \leq R^2\) and \(2rm = R\), it follows that \(K \geq m\). Hence \zcref[S]{eq:lem:Lemma_of_Growth:2} holds, and therefore so does \zcref{eq:lem:Lemma_of_Growth:1}.
\end{proof}

\begin{theorem}[Pointwise Harnack Inequality]\label{thm:pointwise_harnack_inequality}
	Let \(u \in C^\infty(\overline{\amalg}_{8R})\) be a non-negative parabolic function. Suppose that
	\begin{equation*}
		\sup_{\amalg_{2R,+}} u = 1.
	\end{equation*}		
	Then there exists a constant \(\gamma = \gamma(A,a,N,p, U) > 0\) such that
	\begin{equation}\label{eq:thm:pointwise_harnack_inequality:1}
		u(z, 64R^2) \geq \gamma.
	\end{equation}
\end{theorem}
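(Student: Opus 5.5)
The plan is the Landis--Grigor'yan ``chain of growing values'' argument. The two alternatives are given by \zcref{lem:lemma_of_growth} and \zcref{lem:iterated_weak_harnack_inequality}: the second is applied at scale \(4R\), i.e. in \(\amalg_{8R}=B(z,8R)\times(0,64R^2)\) with top point \((z,64R^2)\) and admissible small cylinders inside \(\amalg_{2R}=B(z,2R)\times(0,4R^2)\). Since \(u\) is smooth on \(\overline{\amalg}_{8R}\), the supremum over \(\amalg_{2R,+}\) is attained at some \((x_0,t_0)\in\overline{B(z,R)}\times[3R^2,4R^2]\) with \(u(x_0,t_0)=1\). Moreover \(\|u\|_\infty:=\sup_{\amalg_{8R}}u<\infty\); this finiteness is what forces the construction below to stop.

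I will build points \((x_k,t_k)\), radii \(r_k\) and levels \(\kappa_k=2^{k-1}\) with \(u(x_k,t_k)\geq 2\kappa_k=2^k\). The radii are
\[
r_k=\varepsilon_0 R\,2^{-k/(2l')},
\]
where \(l'\) is an exponent fixed below and \(\varepsilon_0\) is small. At step \(k\) consider the cylinder \(Q_k=B(x_k,r_k)\times(t_k-r_k^2,t_k)\), which has \((x_k,t_k)\) at the centre of its top. \zcref[S]{lem:lemma_of_growth} applies to \(Q_k\) by translating its statement to this centre and starting time. Let \(E_k=\{u>\kappa_k\}\cap Q_k\). There are two cases.
\begin{enumerate}[label=(\roman*)]
\item If \(\overline{\mu}(E_k)\leq\eta\,\overline{\mu}(Q_k)\), \zcref{lem:lemma_of_growth} gives a point \((x_{k+1},t_{k+1})\in\overline{Q_k}\) with \(u(x_{k+1},t_{k+1})\geq 4\kappa_k=2\kappa_{k+1}\), and the construction continues.
\item Otherwise \(\overline{\mu}(\{u>\kappa_k\}\cap Q_k)\geq\eta\,\overline{\mu}(Q_k)\), and we stop.
\end{enumerate}
Since \(u(x_k,t_k)\geq 2^k\) and \(u\) is bounded, case (ii) must occur at some finite step \(k\).

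The geometric requirement is that every \(Q_k\) lie in \(\amalg_{2R}\), so that it is admissible as \(\amalg^0\) in \zcref{lem:iterated_weak_harnack_inequality} at scale \(4R\). We have \(d(x_k,x_0)\leq\sum_j r_j\leq \const\,\varepsilon_0 R\) and \(t_0-t_k\leq\sum_j r_j^2\leq\const\,\varepsilon_0^2R^2\), with constants depending only on \(l'\). So for \(\varepsilon_0\) small, all \(Q_k\subset B(z,\tfrac{3R}{2})\times(2R^2,4R^2)\subset\amalg_{2R}\).

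At the stopping index, apply \zcref{lem:iterated_weak_harnack_inequality} to \(u/\kappa_k\) with \(\amalg^0=Q_k\) and \(\delta=\eta\). This gives
\[
u(z,64R^2)\geq \kappa_k\,\const\left(\frac{\overline{\mu}(Q_k)}{\overline{\mu}(\amalg_{4R})}\right)^{l}.
\]
By \zcref{prop:balls_ratio_estimate}, since \(B(x_k,r_k)\) meets \(B(z,4R)\),
\[
\frac{\overline{\mu}(Q_k)}{\overline{\mu}(\amalg_{4R})}\geq \const\left(\frac{r_k}{R}\right)^{2+a_1}.
\]
Set \(l'=l(2+a_1)\). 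Then
\[
u(z,64R^2)\geq \const\,2^{k-1}\,\varepsilon_0^{l'}\,2^{-k/2}\geq\const\,\varepsilon_0^{l'}=:\gamma,
\]
and \(\gamma\) depends only on \(A,a,N,p,U\). The factor \(1/2\) in the exponent of \(r_k\) is exactly what makes the level gain \(2^k\) beat the volume loss \(2^{-k/2}\), while still giving geometrically summable radii.

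The main obstacle I expect is this bookkeeping. The decay rate of \(r_k\) must be fixed before \(\varepsilon_0\), and \(\varepsilon_0\) must keep the whole chain inside the region where the iterated lemma is valid at scale \(4R\). One must also check that the translated and time-shifted cylinders fit the hypotheses of \zcref{lem:lemma_of_growth}, whose centre is at the top time, and of \zcref{lem:iterated_weak_harnack_inequality}, whose cylinders must lie in \(\amalg_{R/2}\) at the rescaled scale. A secondary point is that the dependence of \(\eta\) and of the constant in \zcref{lem:iterated_weak_harnack_inequality} on \(\delta=\eta\) involves only \(A,a,N,p,U\), so that \(\gamma\) does not depend on the stopping index \(k\).
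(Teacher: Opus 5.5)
Your argument is correct. It reorganizes the paper's Landis chain in a different and somewhat cleaner way.

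\textbf{The paper's route.} The paper chooses the radii adaptively. By an intermediate-value argument it picks \(r_k \le R/2\) with \(\overline{\mu}(E^k_{r_k}(x_k,t_k)) = \eta\,\overline{\mu}(\amalg_{r_k}(x_k,t_k))\) exactly. This puts the measure ratio at the threshold, so at every index both the Lemma of Growth and the Iterated Weak Harnack Inequality are applicable. To make this choice possible at \(r = R/2\), the paper must first dispose of the case where \(\{u>1/2\}\) fills a fixed proportion of the cylinder. At the end it locates a good index by a pigeonhole argument. The chain must leave \(B(z,2R)\times(2R^2,4R^2)\) because \(u\) is bounded, so \(\sum_{k\le K} r_k \ge R/2\). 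Hence \(r_k \ge R/(4k(k+1))\) for some \(k\), and the gain \(2^k\) beats the polynomial loss \((k(k+1))^{-l(2+a_1)}\).

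\textbf{Your route.} You fix geometrically decaying radii in advance, and the stopping time itself is the good index. This buys several simplifications:
\begin{itemize}
\item containment of the whole chain follows automatically from summability of \(r_k\) and \(r_k^2\);
\item the initial case distinction and the continuity/intermediate-value step disappear;
\item no exit or pigeonhole argument is needed.
\end{itemize}
The price is that the decay rate must be tuned to the exponent \(l(2+a_1)\) of the iterated lemma before the chain is built, so that \(2^k\) beats \(2^{-k/2}\). In the paper, \(l\) enters only in the final estimate, and any polynomial loss there is harmless.

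Incidentally, your volume comparison uses the upper bound \(V(z,4R)/V(x_k,r_k)\le A_1(4R/r_k)^{a_1}\), which is the direction actually needed. The paper writes \(A_2,\alpha_2\) at that point and omits the power \(l\), though this does not affect its conclusion.
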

\begin{proof}
	Let us define
	\begin{equation*}
		H := \left\{ (x,t) \in \amalg_R : u(x,t) > \frac{1}{2} \right\}.
	\end{equation*}
	If \(\overline{\mu}(H) \geq \delta \overline{\mu}(\amalg_R)\), where \(\delta > 0\) will be determined later, then \zcref{thm:weak_harnack_inequality} implies
	\begin{equation*}
		u(z, 64R^2) \geq \frac{1}{2} \varepsilon,
	\end{equation*}
	where \(\varepsilon = \varepsilon(\delta, A, a, N, p, U) > 0\), and \zcref{eq:thm:pointwise_harnack_inequality:1} is proved. Suppose now \(\overline{\mu}(H) < \delta \overline{\mu}(\amalg_{4R})\).\par
	\emph{Step 1:} In this step we construct a sequence of points and cylinders, to which we will apply \zcref{lem:lemma_of_growth} successively. For any point \((x,t) \in \amalg_{8R}\) and any radius \(r > 0\) let us define (provided it is well defined) cylinders
	\begin{equation*}
		\amalg_{r} (x,t) := B(x, r) \times (t-r^2, t) \subset \amalg_{2R}
	\end{equation*}
	and level sets
	\begin{equation*}
		E_{r}^k (x,t) := \left\{ (x,t) \in \amalg_r(x,t) : u(x,t) > 2^{k-1} \right\}
	\end{equation*}
	for any integer \(k \geq 0\).
	Our goal is to construct a sequence of points \((x_k, t_k)\) and radii \(r_k \leq R/2\) such that
	\begin{enumerate}[label=(\alph*)]
		\item \(u(x_k, t_k) \geq 2^k\)
		\item \(\overline{\mu}(E_{r_k}^k (x_k, t_k)) = \eta \overline{\mu}(\amalg_{r_k}(x_k, t_k))\) 
		\item \(\sup_{\amalg_{r_k}(x_k, t_k)} u \geq 2^{k+1}\)
		\item \((x_{k}, t_{k}) \in \overline{\amalg_{r_{k-1}}(x_{k-1}, t_{k-1})}\) for \(k \geq 1\)
	\end{enumerate}
	Here, \(\eta\) is the constant from \zcref{lem:lemma_of_growth} which does not depend on \(k\).
	\begin{figure}
		\centering
		\begin{tikzpicture}
			\filldraw[fill=blue!30, thick] (0,0) .. controls (-3,0.5) and (-3,0) .. (-5,6.5)
  				--
  				(-3.7,6.5) .. controls (-2,1) and (-2,2) .. (0,1.2)
  				-- cycle;
			\begin{scope}
  				\clip (-3.6,1.7) rectangle (-1.4,1.1);
  				\filldraw [fill=orange!40, draw=black, very thin] (-2.3, 1.7) .. controls (-2.3, 1.6) and (-1.9, 1.2) .. (-1.9, 1.1)
    				-- 
    				(-2.3, 1.1) .. controls (-2.3, 1.2) and (-2.7, 1.6) .. (-2.7, 1.7);
			\end{scope}

			\draw[thick] (-6.5,6.5) rectangle (-1.5,3);
			\draw[thick] (-4.8,3) rectangle (-2.5,2.2);
			\draw[thick] (-4,2.2) rectangle (-2,1.7);
			\draw[thick] (-3.6,1.7) rectangle (-1.4,1.1);
			\draw[thick] (-8,6.5) rectangle (0,-0.4);

			\node at (-2.2,6) { \(\amalg_{2R, +}\)};
			\node at (-7.5, 0) { \(\widetilde{\amalg}\)};
			\node[blue!70!black] at (-3.8,5) { \(H\)};

			\fill (-3.65,3) circle (1pt);
			\fill (-3,2.2) circle (1pt);
			\fill (-2.5, 1.7) circle (1pt);
			\fill (-2.1, 1.1) circle (1pt);

			\draw[->] (-2.8, 0.4) -- (-2.1, 1.1) node[pos=0, below, inner sep=1pt] { \((x_{k+1}, t_{k+1})\)};
			\draw[->] (-1.4,2.5) -- (-2.5, 1.7) node[pos=0, above right, inner sep=1pt] { \((x_k, t_k)\)};

			\draw[->, orange!70!black] (-0.8, 1.8) -- (-2.25, 1.2) node[pos=0, right, inner sep=1pt] { \(E_{r_k}^k\)};

			\draw[->] (-4, 1) -- (-3.25, 1.3) node[pos=0, below left, inner sep=1pt] { \(\amalg_{r_k}^k\)};

		\end{tikzpicture}
		\caption{Construction of the sequence of points \(\{(x_k,t_k)\}\)}
	\end{figure}
	At first, we observe that we have for \((x,t) \in \amalg_{2R}\) and \(r = R/2\) according to \zcref{prop:balls_ratio_estimate} the inequality
	\begin{equation*}
		\frac{\overline{\mu}(E_{r}^k(x,t))}{\overline{\mu}(\amalg_r(x,t))} \leq \frac{\overline{\mu}(H)}{\overline{\mu}(\amalg_{R/2}(x,t))} \leq \delta \frac{\overline{\mu}(\amalg_{4R})}{\overline{\mu}(\amalg_{R/2}(x,t))} \leq \delta \cdot 64 A_1 8^{\alpha_1} = \eta
	\end{equation*}
	for \(\delta = \eta /(64A_1 8^{\alpha_1})\). If \(u(x,t) \geq 2^k\) is satisfied, then for \(r\) small enough, the ratio\(\overline{\mu}(E^k_r(x, t)) / \overline{\mu}(\amalg_{r}(x, t))\) is arbitrarily close to \(1\). So we find \(r \leq R/2\) with
	\begin{equation}\label{eq:thm:pointwise_harnack_inequality:2}
		\overline{\mu}(E_{r}^k(x, t)) = \eta \overline{\mu}(\amalg_{r}(x, t)).
	\end{equation}
	Now we are going to construct this sequence. Let \((x_0, t_0)\) be a maximum point of the function \(u\) in the closure of the cylinder \(\amalg_{2R,+}\), that is,
	\begin{equation*}
		\sup_{\amalg_{2R,+}} u = 1 = u(x_0, t_0).
	\end{equation*}
	We find \(r_0 \leq R/2\) such that \zcref{eq:thm:pointwise_harnack_inequality:2} holds. Now we apply \zcref{lem:lemma_of_growth} to the function \(u\) and the sets \(E_{r_0}^0 (x_0, t_0) \subset \amalg_{r_0}(x_0, t_0)\) to obtain
	\begin{equation*}
		\sup_{\amalg_{r_0}(x_0, t_0)} u \geq 2.
	\end{equation*}
	The point \((x_0, t_0)\) satisfies conditions (a) - (d). Now assume we are given a point \((x_k, t_k)\) satisfying (a)-(d). Let \((x_{k+1}, t_{k+1})\) be a maximum point of the function \(u\) in the closure of the cylinder \(\amalg_{r_k}(x_k, t_k)\). That is,
	\begin{equation*}
		u(x_{k+1}, t_{k+1}) = \sup_{\amalg_{r_k}(x_k, t_k)} u \geq 2^{k+1}.
	\end{equation*}
	Let us choose \(r_{k+1}\) such that \zcref{eq:thm:pointwise_harnack_inequality:2} holds for the point \((x_{k+1}, t_{k+1})\) and according to \zcref{lem:lemma_of_growth} we have
	\begin{equation*}
		\sup_{\amalg_{r_{k+1} (x_{k+1}, t_{k+1})}} u \geq 2^{k+2}.
	\end{equation*}
	All together, we have inductively constructed a sequence of points \(\{(x_k, t_k)\}_{k \geq 0}\) matching (a)-(d).\par
	\emph{Step 2:} Now we are going to establish the pointwise estimate \zcref{eq:thm:pointwise_harnack_inequality:1}. Let us consider the sequence \(\{(x_k, t_k\}_{k\geq 0}\) from Step 1. We just consider those \(k\) for which 
	\begin{equation*}
		\amalg_{r_k}(x_k, t_k) \subset B(z, 2R) \times (2R^2,4R^2) =: \widetilde{\amalg}
	\end{equation*}	
	and let \(K\) be the largest index of those \(k\). We have \(K < \infty\) because \(\sup_{\amalg_{8R}} u < \infty\). Since we have \(\amalg_{r_{k+1}}(x_{k+1}, t_{k+1}) \not\subset \widetilde{\amalg}\), it follows
	\begin{equation*}
		\sum_{k=0}^{K+1} r_k \geq R.
	\end{equation*}
	Now we subtract \(r_{K+1} \leq R/2\) to obtain
	\begin{equation*}
		\sum_{k=0}^K r_k \geq \frac{R}{2} > \frac{R}{4} = \frac{R}{4}\sum_{k=1}^\infty \frac{1}{k(k+1)}.
	\end{equation*}
 	So there must be an integer \(k \leq K\) such that
 	\begin{equation}\label{eq:thm:pointwise_harnack_inequality:3}
 		r_k \geq \frac{R}{4k(k+1)}.
 	\end{equation}
 	We fix this \(k\) and apply \zcref{lem:iterated_weak_harnack_inequality} to the function \(u/2^{k+1}\) in the cylinders \(\amalg_{r_k}(x_k, t_k) \subset \amalg_{2R}\) and \(\amalg_{8R}\). This yields
 	\begin{equation*}
 		u(z, 64R^2) \leq \const_{A, a ,N, p, U} \left(\frac{\overline{\mu}(\amalg_{r_k}(x_k, t_k))}{\overline{\mu}(\amalg_{4R})}\right)^l 2^{k-1}.
 	\end{equation*}
 	According to \zcref{prop:balls_ratio_estimate} we have estimate
 	\begin{equation*}
 		\frac{\overline{\mu}(\amalg_{r_k}(x_k, t_k))}{\overline{\mu}(\amalg_{4R})} = \frac{r_k^2}{16r^2} \frac{V(x_k, r_k)}{V(z, 4R)} \geq \frac{1}{A_2} \left(\frac{r_k}{4R}\right)^{\alpha_2 + 2} \geq \const_A (k(k+1))^{-\alpha_2 - 2},
 	\end{equation*}
 	so
 	\begin{equation*}
 		u(z, 64R^2) \geq \const_{A, a, N, p, U} \frac{2^{k-1}}{(k(k+1))^{\alpha_2 + 2}}
 	\end{equation*}
	follows. Note that the preceding inequality only depends on \(A, a, N, p, U\) and \(k\).	If we take the infimum with respect to \(k\), we obtain
	\begin{equation*}
		M = \inf_{k \geq 1} \left\{ \frac{2^{k-1}}{(k(k+1))^{\alpha_2 + 2}} \right\} > 0.
	\end{equation*}
	\(M\) is positive because the nominator grows exponentially, while the denominator grows polynomially. We obtain \zcref{eq:thm:pointwise_harnack_inequality:1} for \(\gamma = \const_{A, a, N, p, U} M\).
\end{proof}

\begin{remark}\label{rem:pointwise_harnack_inequality}
	The idea of the proof of the preceding theorem is attributed to E. M. Landis. A detailed exposition of his argument can be found in \cite{landis1998second}.
\end{remark}

\begin{theorem}[Harnack Inequality]\label{thm:harnack_inequality}
	Let \(u \in C^\infty(\overline{\amalg}_{2R})\) be a non-negative parabolic function in \(\amalg_{2R}\). Then the estimate
	\begin{equation}\label{eq:thm:harnack_inequality:1}
		\sup_{\amalg_{2R,-}} u \leq \const_{A, a, N, p, U} \inf_{\amalg_{2R, +}} u
	\end{equation}
	holds.
\end{theorem}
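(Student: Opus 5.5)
The plan is to deduce \zcref{eq:thm:harnack_inequality:1} from the Pointwise Harnack Inequality \zcref{thm:pointwise_harnack_inequality} by a rescaling and covering argument. \zcref{thm:pointwise_harnack_inequality} compares the supremum over an earlier cylinder with the value at one later point, with the centre at scale \(8R\). We apply it at a small scale \(\rho = cR\) (with \(c\) an absolute constant, e.g. \(c = 1/64\)) around suitable points of \(\amalg_{2R,+}\). For each such point we need the required cylinders \(B(x,8\rho) \times (t_0, t_0+64\rho^2)\) to lie inside \(\amalg_{2R}\). This is possible because \(\amalg_{2R,+} = B(z,R)\times(3R^2,4R^2)\) keeps distance \(R\) from the lateral boundary, and the time gap between \(\amalg_{2R,-}\) and \(\amalg_{2R,+}\) is at least \(R^2\).

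\emph{Step 1 (one-step comparison).} Fix \((y,s)\) with \(y \in B(z,3R/2)\) and \(s \in (R^2/2, 4R^2)\). By time translation and normalisation, \zcref{thm:pointwise_harnack_inequality} gives
\[
u(y, s+60\rho^2) \geq \gamma \sup_{B(y,\rho)\times(s+3\rho^2,s+4\rho^2)} u,
\]
provided the base cylinder \(B(y,8\rho)\times(s,s+64\rho^2)\) lies in \(\amalg_{2R}\). Translation is harmless because \(L\) has coefficients depending smoothly on \(t\) and all constants depend only on the ellipticity of \(U\) and the bounds of \(p\). Normalisation means dividing by the supremum, or by the supremum plus \(m\) with \(m \to 0\) if the supremum is zero. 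Hence a value of \(u\) at a later time, at the same spatial centre, dominates \(\gamma\) times the supremum on a small earlier box.

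\emph{Step 2 (chaining).} For \((x_-,t_-) \in \amalg_{2R,-}\) and \((x_+,t_+) \in \amalg_{2R,+}\), join \(x_-\) to \(x_+\) by a near-minimising geodesic of length less than \(2R\); completeness of \(M\) gives this. Split it into \(n \simeq 1/c\) points \(y_0=x_-,\dots,y_n=x_+\) with \(d(y_i,y_{i+1}) < \rho\). The time increments are about \(60\rho^2\) per step. Choose \(c\) so that \(n\cdot 64\rho^2 \leq R^2\), which fits in the gap \(t_+-t_- \geq R^2\); this forces \(\rho \lesssim R/n\) and \(n \simeq 1/c\), and \(n\rho^2 \simeq c R^2\) works. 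Any leftover time is absorbed by inserting extra "stationary" steps \(y_{i+1}=y_i\). Since the leftover is at most \(3R^2\), this costs a bounded number \(\lesssim 1/c^2\) of additional steps. Each step is permitted because \(y_{i+1} \in B(y_i,\rho)\) and the next time lies in the window \((s+3\rho^2,s+4\rho^2)\) after the adjustments below. Iterating gives \(u(x_+,t_+) \geq \gamma^{N_0} u(x_-,t_-)\), where \(N_0\) depends only on \(c\). Taking the supremum over \(\amalg_{2R,-}\) and the infimum over \(\amalg_{2R,+}\) yields \zcref{eq:thm:harnack_inequality:1} with constant \(\gamma^{-N_0}\).

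\emph{Main obstacle.} The delicate part is the bookkeeping of times. In Step 1 the sup-box sits at times \((s+3\rho^2,s+4\rho^2)\) while the output point is at \(s+60\rho^2\), so each step advances the "current point" by a time that lies in a fixed range. Moreover, the starting point must lie inside the sup-box rather than at its centre. I would handle this by choosing \(s\) at each step as \(t_{\text{current}}-3.5\rho^2\), so the current point is in the box. The net advance is then \(56.5\rho^2\) per step, and the last step is adjusted by shrinking \(\rho\) slightly, within a factor of 2, so that the total exactly hits \(t_+\). This relies on the constant in \zcref{thm:pointwise_harnack_inequality} being scale-invariant. I would also verify that all base cylinders stay in \(\amalg_{2R}\) near \(t=R^2/2\) and near the top \(4R^2\), where the geodesic and the time grid must be chosen so that no base cylinder exits \(\amalg_{2R}\).
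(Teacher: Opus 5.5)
Your proposal is essentially the paper's proof. There, too, \zcref{thm:pointwise_harnack_inequality} is rescaled (with $\delta = 8$) to $u(x,t+\delta^2 r^2) \geq \gamma \sup_{B(x,r)\times(t+3r^2,t+4r^2)} u$. The current point is placed in the middle of the sup-box, so each step advances time by $\frac{2\delta^2-7}{2}r^2 = 60.5\,r^2$, and $K$ steps give $u(y,t)\geq \gamma^K u(x,s)$. The only difference is bookkeeping: the paper fixes $K$ and tunes a uniform $r \simeq R/\sqrt{K}$ so the times add up exactly, whereas you fix $\rho = cR$ and absorb leftover time with stationary steps. For those stationary steps any smaller radius is allowed, so no factor-$2$ restriction is needed.

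Two small fixes are needed. First, the rescaled output point is $(y,s+64\rho^2)$, not $(y,s+60\rho^2)$, so your net advance per step is $60.5\rho^2$, as in the paper. Second, balls need not be convex, so a near-minimising curve between two points of $B(z,R)$ can leave $B(z,3R/2)$. Route the chain through $z$ instead, concatenating near-minimising curves $x_-\to z\to x_+$; all their points lie in $B(z,R)$, which guarantees $B(y_i,8\rho)\subset B(z,2R)$. The paper's proof also leaves this spatial containment unchecked.
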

\begin{proof}
	For \((x,t) \in \amalg_{2R}\) define
	\begin{equation*}
		\amalg_{r}(x,t) := B(x,r) \times (t, t+r^2)
	\end{equation*}
	and the upper  and lower cylinders
	\begin{equation*}
		\amalg_{2r,-} (x,t) := B(x,r) \times (t+r^2, t+2r^2), \ \amalg_{2r,+} (x,t) := B(x,r) \times (t+3r^2, t+4r^2)
	\end{equation*}
	By a similar argument as given in the proof of \zcref{thm:weak_harnack_inequality} it suffices to consider positive functions \(u\). Let us set \(\delta := 8\). The condition \(\amalg_{\delta r}(x,t) \subset \amalg_{2R}\) certainly implies
	\begin{equation}\label{eq:thm:harnack_inequality:2}
		u(x, t+\delta^2r^2) \geq \gamma \sup_{\amalg_{2r, +}(x,t)} u.
	\end{equation}
		\emph{Step 1:} We are going to construct a special sequence to iteratively apply \zcref{eq:thm:harnack_inequality:2}. Let us fix and \((x, s) \in \amalg_{2R, -}\) and \((y,t) \in \amalg_{2R, +}\). Our sequence \(\{(x_k, t_k)\}_{k=0}^K\) should satisfy the following properties:
	\begin{enumerate}[label=(\alph*)]
		\item \((x_0, t_0) = (x, s)\) and \((x_K, t_K) = (y, t)\)
		\item \((x_k, t_k) \in \amalg_{2r, +} (x_{k+1}, t_{k+1}-\delta^2r^2) \subset \amalg_{2R}\).
	\end{enumerate}
	\begin{figure}
		\centering
		\begin{tikzpicture}
			\draw[thick] (0,0) rectangle (9, 6);
			\filldraw	[fill=blue!20, draw=black, thick] (2.25,1.5) rectangle (4.5,4.5);
			\filldraw	[fill=blue!20, draw=black, thick] (6.75,1.5) rectangle (9,4.5);
			\node at (1,4) {\(\amalg_ {2R}\)};
	
			\draw (4.25,3) -- (4.75,3.5) -- (5.2, 2.5);
			\draw[dashed] (5.2, 2.5) -- (5.35, 2.8);
	
			\fill (4.25, 3) circle (1pt) node[left, inner sep=1pt] {\((x,s)\)};	
			\fill (4.75,3.5) circle (1pt) node[above, inner sep=1pt, xshift=9pt] {\((x_1, t_1)\)};	
			\fill (5.2, 2.5) circle (1pt);	

			\fill (5.5,3) circle (0.5pt);
			\fill (5.6,3) circle (0.5pt);
			\fill (5.7,3) circle (0.5pt);
	
			\draw[dashed] (5.85,2.6) -- (6,3);
			\draw (6,3) -- (6.4,2) -- (6.9,3.9) -- (7.3,3);
			\fill (6,3) circle (1pt);
			\fill (6.4,2) circle (1pt);
			\fill (6.9,3.9) circle (1pt) node[above right, inner sep=1pt, xshift=-4pt] {\((x_{K-1}, t_{K-1})\)};
			\fill (7.3,3) circle (1pt)  node[right, inner sep=1pt] { \((y,t)\)};
			\node[blue!70!black] at (8, 1.75) { \(\amalg_{2R, +}\)};
			\node[blue!70!black] at (3.5, 1.75) { \(\amalg_{2R, -}\)};
		\end{tikzpicture}
		\caption{Harnack sequence from \((x,s)\) to \((y,t)\)}
	\end{figure}
	Given \(t_k\), let us pick
	\begin{equation*}
			t_{k+1} = \frac{2\delta^2 - 7}{2}r^2  + t_k
	\end{equation*}
	such that 
	\begin{equation*}
		t_k \in ((t_{k+1} - \delta^2r^2) + 3r^2, (t_{k+1} - \delta^2r^2) + 4r^2)
	\end{equation*}	
	is satisfied. We must ensure
	\begin{equation*}
		\sum_{k=0}^K (t_{k+1}-t_k) = t - s,
	\end{equation*}				
	that is, 
	\begin{equation*}
		\frac{2\delta^2 - 7}{2} K r^2 = t-s.
	\end{equation*}				
	Rearranging terms leads to the choice
	\begin{equation}\label{eq:thm:harnack_inequality:3}
		r = \sqrt{\frac{2(t-s)}{K(2\delta^2 - 7)}} \simeq \frac{R}{\sqrt{K}},
	\end{equation}
	where we have used the relation \(R^2 \leq t-s \leq 4R^2\). To connect \(x, y \in B(z, 2R)\), the condition
	\begin{equation*}
		K \cdot 2r \geq 4R,
	\end{equation*}				
	must hold because \(d(x_k, x_{k+1}) < 2r\). We can arrange this by noting
	\begin{equation*}
		4R \leq \const 2\sqrt{K}r \leq 2Kr
	\end{equation*}
	for \(K\) large enough. It remains to ensure
	\begin{equation*}
		\amalg_{\delta r} (x_{k+1}, t_{k+1} - \delta^2r^2) \subset \amalg_{2R},
	\end{equation*}
	that is, \(t_{k+1}-\delta^2r^2 > 0\). We have \(t_{k+1} - \delta^2r^2 > R^2-\delta^2r^2\), hence it  suffices to ensure \(\delta < R/r\). We have according to \zcref{eq:thm:harnack_inequality:3}
	\begin{equation*}
		\frac{R}{r}  \geq \const \sqrt{K} > \delta
	\end{equation*}
	for \(K\) large enough.\par
	\emph{Step 2:} Fix two arbitrary points \((x, x) \in \amalg_{2R,-}\) and \((y, t) \in \amalg_{2R, +}\). We claim
	\begin{equation}\label{eq:thm:harnack_inequality:4}
		u(x, s) \leq \const_{A, a, N, p, U} u(y, t).
	\end{equation}
	\zcref[S]{eq:thm:harnack_inequality:4} clearly implies \zcref{eq:thm:harnack_inequality:1}. Let \(\{(x_k, t_k)\}_{k=0}^K\) be the sequence constructed in Step 1 for the values \((x, s)\) and \((y, t)\). We have
	\begin{equation*}
		u(x_{k+1}, t_{k+1}) \geq \gamma u(x_k, t_k)
	\end{equation*}
	for all \(k \leq K\) due to \zcref{eq:thm:harnack_inequality:3}. Iterating this \(K\) times leads to
	\begin{equation*}
		u(y, t) \geq \gamma^K u(x, s),
	\end{equation*}
	and \zcref{eq:thm:harnack_inequality:4} follows since \(K\) does not depend on \((x,s)\) and \((y,t)\).
\end{proof}

We now only assume that \((M,g,\mu)\) is a smooth, non-compact, connected, and complete weighted Riemannian manifold, possibly with boundary.

\begin{theorem}\label{thm:harnack_inequality_equivalent}
    The Harnack inequality for solutions of the the heat equation is equivalent to the Harnack inequality for parabolic
    differential operators \(L\), as stated in \zcref{thm:harnack_inequality}.
\end{theorem}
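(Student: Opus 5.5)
The plan is to prove the two implications separately. The route through the geometry is the characterization of the parabolic Harnack inequality for the heat equation by volume doubling together with the Poincar\'e inequality (Grigor'yan, Saloff-Coste), recalled in \zcref{sec:introduction}.

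\emph{The Harnack inequality for all operators \(L\) implies it for the heat equation.} This direction is immediate. The heat operator \(\partial_t - \Delta\) is the special case \(p \equiv 1\), \(U = \mathrm{id}\). It is uniformly elliptic with \(\lambda = 1\), and the Neumann condition \(\langle \nabla u, n\rangle_U = 0\) becomes \(\langle \nabla u, n\rangle = 0\). So a Harnack inequality valid for every admissible \(L\), with constants depending only on the ellipticity of \(U\) and the bounds of \(p\), specializes to the heat equation.

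\emph{The Harnack inequality for the heat equation implies it for all \(L\).} I will do this in two steps.

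\textbf{Step 1.} Pass from the Harnack inequality for the heat equation (with Neumann condition on \(\partial M\), at all scales \(z \in M\), \(R>0\)) to volume doubling \ref{enum:introduction:1} and the Poincar\'e inequality \ref{enum:introduction:2}. This is the necessity part of Saloff-Coste's theorem \cite{saloff1992note,saloff2002aspects}, and I would cite it.

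If it has to be sketched, I would argue as follows. The Harnack inequality yields two-sided Gaussian bounds for the Neumann heat kernel via the Fabes--Stroock / Li--Yau argument \cite{fabesstroock1986harnack,liyau1986kernel}. The on-diagonal bounds \(p_t(x,x) \simeq 1/V(x,\sqrt t)\) together with the Gaussian upper bound give volume doubling. Comparing \(p_t(x,y)\) and \(p_t(x,x)\) near the diagonal, using the lower bound, gives a Poincar\'e inequality on balls. This comes in the weak form with some \(N \geq 1\), because one uses \(\int (f - P_t f)^2\) with the kernel localized in a larger ball. It is then upgraded to the strong form via \cite{jerison1986poincare}, using completeness and the length structure.

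\textbf{Step 2.} With \ref{enum:introduction:1} and \ref{enum:introduction:2} in hand, the standing assumptions of \zcref{subsec:known_results} hold. Then \zcref{thm:harnack_inequality} applies verbatim to any non-negative parabolic function of \(L\) in \(\amalg_{2R}\), with constant depending on \(A, a, N, p, U\). This is exactly the Harnack inequality for \(L\).

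\textbf{Main obstacle.} I expect the main difficulty to be Step 1 in the presence of a boundary. The cited necessity results are stated for manifolds without boundary or for the Laplace--Beltrami heat equation with Neumann condition. One has to check that the Neumann heat kernel is the relevant one: it is the kernel whose solutions satisfy \(\langle \nabla u, n\rangle = 0\), matching \zcref{def:weak_laplacian}. One also has to check that weighted measures cause no problem, which they do not, since all arguments are in terms of \(\mu\) and the Dirichlet form \(\int |\nabla f|^2 d\mu\).

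I would handle this by noting that the Saloff-Coste/Sturm framework is formulated for strongly local regular Dirichlet forms. The weighted Neumann form on \(M\) is such a form, and its intrinsic metric coincides with the Riemannian distance \(d\). In that framework the Harnack inequality is equivalent to doubling plus Poincar\'e, which settles both Step 1 and the equivalence.
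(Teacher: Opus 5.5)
Your proposal is correct and is essentially the paper's argument: the heat-equation Harnack inequality yields volume doubling and the Poincar\'e inequality, and then \zcref{thm:harnack_inequality} gives the Harnack inequality for every \(L\). The paper reaches doubling and Poincar\'e through two-sided Gaussian bounds, whereas you cite Saloff-Coste's necessity theorem directly, which is the same chain of known results. Beyond that, you make explicit the trivial converse (take \(p\equiv 1\), \(U=\mathrm{id}\)), which the paper leaves implicit, and you add a remark on the boundary and weighted setting via Sturm's Dirichlet-form framework; in your optional sketch, doubling is most directly read off from the near-diagonal lower bound together with \(\int_M p_t(x,\cdot)\,d\mu\le 1\), rather than from the Gaussian upper bound.
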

\begin{proof}
    The Harnack inequality for the heat equation implies two-sided Gaussian heat kernel bounds; see \cite{kohlmeier2025gaussian, grigoryan1991heat,saloff1992note,saloff2002aspects,fabesstroock1986harnack,grigoryan1994heat}. These estimates yield the volume doubling \ref{enum:introduction:1} and Poincar\'e inequality \ref{enum:introduction:2}; see \cite{grigoryan1991heat,saloff1992note,saloff2002aspects}. Hence we obtain the Harnack inequality stated in \zcref{thm:harnack_inequality}.
\end{proof}

\printbibliography

\bigskip

\begin{small}
	\noindent
	Stefan Christian Kohlmeier\\
	Institute of Mathematics, Paderborn University\\
	\textit{E-mail:} {\hypersetup{allcolors=black}\href{mailto:stekoh@mail.uni-paderborn.de}{\texttt{stekoh@mail.uni-paderborn.de}}}
\end{small}

\end{document}